\documentclass[11pt]{article}
\usepackage[utf8]{inputenc}
\usepackage[margin=1in]{geometry}
\usepackage{amssymb,amsmath,amsthm,graphicx,caption}
\usepackage{booktabs}
\usepackage{longtable}
\usepackage{enumitem}
\usepackage{rotating}
\usepackage{pdflscape}
\usepackage[hidelinks]{hyperref}
\usepackage{url}
\usepackage{cite}

\newtheorem{theorem}{Theorem}
\newtheorem{proposition}[theorem]{Proposition}
\newtheorem{lemma}[theorem]{Lemma}
\newtheorem{corollary}[theorem]{Corollary}

\newtheorem{remark}{Remark}

\graphicspath{{figures/}}

\title{iSTAR: an algebraic-collapse framework for variational reduction in quantum-inspired continuous Ising solvers}

\author{Bowen Liu\thanks{eBay Inc.\ Shanghai, China.
\href{mailto:boweliu@ebay.com}{\texttt{boweliu@ebay.com}}}
\and
Dongmei Xiao\thanks{Corresponding author.
School of Mathematical Sciences,
Shanghai Jiao Tong University, Shanghai 200240, China.
\href{mailto:xiaodm@sjtu.edu.cn}{\texttt{xiaodm@sjtu.edu.cn}}}}

\date{}

\begin{document}

\maketitle

\begin{abstract}
Continuous Ising solvers embed a discrete optimization problem into a
continuous dynamical system and recover the spin configuration by sign
readout, but dense interaction evaluation gives an
$\mathcal{O}(N^2)$-per-step cost. We show that this cost is not
intrinsic: during late-stage simulated bifurcation the trajectory
collapses onto a lower-dimensional active subspace, and saturated
coordinates can be eliminated exactly by a variational frozen-set
identity whose couplings fold into an induced field on the
unresolved subsystem. We prove large-parameter recovery for the
external-field quartic model, the hard-box limit of ballistic
confinement, and a robust-margin freezing criterion. The resulting
algorithm, iSTAR (Ising Stable-set Tail-Aware Reduction), exploits
this collapse by detecting stabilized coordinates and continuing only
on the active tail. An online certified implementation on the
G-set benchmark preserves the same-seed baseline in all
runs and removes on average 64.4\% of the dense interaction work.
\end{abstract}

\medskip
\noindent\textbf{Keywords:} Ising model, simulated bifurcation, quantum-inspired optimization, variational reduction, frozen-set method, active-set reduction, combinatorial optimization

\section{Introduction}

Many optimization problems in science and engineering reduce to the
search for an optimal binary configuration. Ising formulations
provide a common language for Max-Cut, graph partition, community
detection, circuit layout, and other NP-hard
problems~\cite{ref1,ref2,ref3,ref4,ref5,ref6,ref7,ref8,ref9,ref10,ref11,ref12,ref13,ref14,ref15,ref16,ref17,ref18,ref19,ref20,ref21,ref22,ref23,ref24,ref25,ref26}.
This broad applicability has motivated a wide range of physics-inspired
and quantum-inspired solvers in which a discrete objective is embedded
into a continuous dynamical system and the final binary state is
recovered by sign
readout~\cite{ref27,ref28,ref29,ref30,ref31,ref32,ref33,ref34,ref35,ref36,ref37,ref38,ref39,ref40,ref41}.
Recent work has also improved simulated-bifurcation-type solvers by
modifying their dynamics or adding auxiliary
fluctuations~\cite{ref20}. Recent applications further indicate that
such Ising-based methods can be effective in molecular docking,
molecular conformation generation, and related
tasks~\cite{ref16,ref17,ref18,ref19}.

The mathematical question behind these methods is natural. A
simulated bifurcation solver evolves in a continuous state space,
whereas the target problem is discrete. One therefore asks when the
minimizers, or at least the natural sign readout, of the continuous
model remain faithful to the underlying Ising objective. This is first
a variational question about the structure of the landscape, before it
becomes a question about any particular numerical trajectory.

Two distinct variational pictures arise in the simulated bifurcation
literature. One is the quartic soft-spin landscape underlying
adiabatic simulated bifurcation (aSB). The other is the
hard-confinement, or saturation, picture underlying ballistic
simulated bifurcation (bSB). These two pictures are related, but they
lead to different mathematical questions and different reduction
mechanisms. Related continuous relaxations have also appeared in other
recent work, including the Free-Energy Machine of Shen et
al.~\cite{ref34}; here, however, our focus is specifically on the
simulated bifurcation family and on the reduction mechanism induced by
late-stage stabilization.

The central question of this paper is whether continuous Ising
dynamics contain an intrinsic active-set structure. Our answer is
variational. We show that once a subset of spin signs is fixed, those
coordinates need not be treated as merely inactive numerical
variables: they can be eliminated exactly. Their couplings become an
induced external field on the unresolved subsystem, and the original
Ising instance reduces to a lower-dimensional Ising problem on the
remaining coordinates. This frozen-set reduction principle is the
conceptual core of the paper (Fig.~\ref{fig:schematic}).

\begin{figure}[t]
\centering
\includegraphics[width=0.96\textwidth]{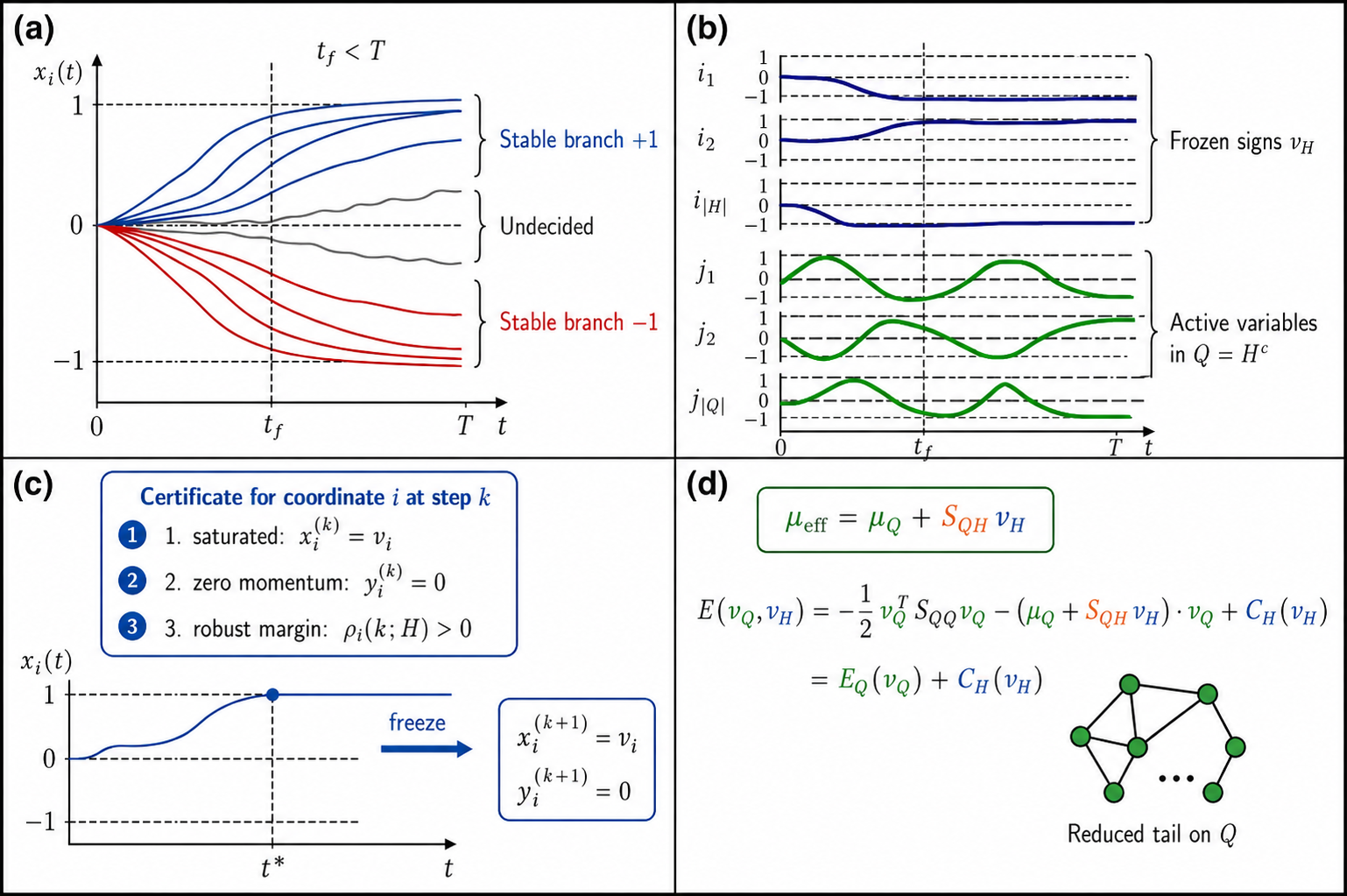}
\caption{Illustration of iSTAR. (a) In simulated bifurcation, many
coordinates select stable branches before the nominal terminal time.
(b) The trajectory therefore induces a certified split into frozen
coordinates $H$ and active coordinates $Q = H^c$. (c) Coordinates are
frozen only after the robust branch-freezing certificate is satisfied;
the compact margin $\rho_i(k;H)$ used in the panel is defined in
Section~\ref{sec:theory}. (d) The frozen signs $\mathbf{v}_H$ are
eliminated algebraically; their couplings induce the effective field
$\mu_{\mathrm{eff}} = \mu_Q + S_{QH}\mathbf{v}_H$, and the tail
computation continues on the reduced subsystem $Q$.}
\label{fig:schematic}
\end{figure}

We consider the Ising problem with external field
\begin{equation}
\min_{\mathbf{v}\in\{-1,1\}^n} E(\mathbf{v}) := -\tfrac{1}{2} \mathbf{v}^{\mathsf{T}} S \mathbf{v} - \mu \cdot \mathbf{v},
\label{eq:ising}
\end{equation}
where $S = (s_{ij})_{n \times n}$ is a symmetric interaction matrix
with $s_{ii} = 0$ and $\mu \in \mathbb{R}^n$ is the external field.
This discrete objective is the common target of all the simulated
bifurcation models studied in this paper. We write
$\mathcal{C} := \{-1,1\}^n$ for the corresponding set of spin
configurations.

Simulated bifurcation was introduced through two closely related
Hamiltonian models~\cite{ref21,ref22}. Both evolve a pair of conjugate
variables $(\mathbf{x}, \mathbf{y}) \in \mathbb{R}^{2n}$ under a
time-dependent Hamiltonian whose potential part embeds the Ising
couplings.

\medskip
\noindent\textbf{aSB (adiabatic).}
The aSB Hamiltonian includes a quartic double-well term that creates
the soft-spin bifurcation landscape:
\begin{equation}
H_{\mathrm{aSB}}(\mathbf{x}, \mathbf{y}, t)
= \sum_{i=1}^{n} \frac{y_i^2}{2}
+ \sum_{i=1}^{n} \frac{x_i^4}{4}
+ \frac{\beta - \alpha(t)^2}{2} \|\mathbf{x}\|^2
- \frac{1}{2} \mathbf{x}^{\mathsf{T}} S \mathbf{x}
- \alpha(t)\, \mu \cdot \mathbf{x},
\label{eq:Ham_aSB}
\end{equation}
where $\alpha(t) \ge 0$ is a non-decreasing bifurcation parameter and
$\beta > 0$ is fixed. The potential part of $H_{\mathrm{aSB}}$ is the
quartic landscape $U_{\mathrm{aSB}}$ analyzed in
Section~\ref{sec:theory}. Hamilton's equations give
\begin{equation}
\frac{dx_i}{dt} = \frac{\partial H_{\mathrm{aSB}}}{\partial y_i} = y_i,
\qquad
\frac{dy_i}{dt} = -\frac{\partial H_{\mathrm{aSB}}}{\partial x_i}
= -\Bigl( x_i^3 + (\beta-\alpha^2)x_i
- \sum_{j=1}^{n} s_{ij} x_j - \alpha \mu_i \Bigr).
\label{eq:aSB_EOM}
\end{equation}

\noindent\textbf{bSB (ballistic).}
The bSB model~\cite{ref22} replaces the smooth quartic confinement of
aSB by perfectly inelastic walls at $x_i = \pm 1$.  Following the
notation of~\cite{ref22}, the potential is
\begin{equation}
V_{\mathrm{bSB}}(\mathbf{x})
= \frac{a_0 - a(t)}{2} \sum_{i=1}^{n} x_i^2
- \frac{c_0}{2} \sum_{i=1}^{n}\sum_{j=1}^{n} J_{ij}\, x_i x_j,
\qquad |x_i| \le 1 \;\; \forall i,
\label{eq:V_bSB}
\end{equation}
and $V_{\mathrm{bSB}} = \infty$ otherwise, where $a_0, c_0 > 0$ are
constants, $a(t)$ increases from $0$ to $a_0$, and $J$ is the
symmetric coupling matrix with zero diagonal.  The Hamiltonian is
\begin{equation}
H_{\mathrm{bSB}}(\mathbf{x}, \mathbf{y}, t)
= \frac{a_0}{2} \sum_{i=1}^{n} y_i^2
+ V_{\mathrm{bSB}}(\mathbf{x}),
\label{eq:Ham_bSB}
\end{equation}
In the present work we additionally include an external-field term
$-\mu \cdot \mathbf{x}$ in the potential, specialize to
$a_0 = c_0 = 1$, and identify $\alpha(t)^2 = a(t)$ and
$S = J$.  The discrete Euler update used in iSTAR is then given
explicitly in the Appendix (Proposition~\ref{prop:one_step_app}).

The paper develops the frozen-set reduction idea along two
complementary theoretical lines. For the aSB-type quartic potential
with external field, we prove a large-$\alpha$ recovery theorem
showing that, under explicit quantitative conditions, every global
minimizer of the continuous problem decodes by sign readout to a
minimizer of the Ising problem. For the bSB-type hard-confinement
model, we study a soft-to-hard approximation scheme, prove that the
limiting hard-box problem reduces to the same discrete objective under
a positive-semidefinite condition, and derive explicit frozen-set and
persistent-freezing criteria for late-stage reduction. These results
lead to iSTAR, which implements the frozen-set principle by detecting
stable coordinates, transferring their couplings to the active
subsystem, and continuing only the unresolved tail.

\paragraph{Our contribution.}
The paper has three main contributions.
\begin{enumerate}[leftmargin=*]
\item For the aSB-type quartic potential, we prove a large-$\alpha$
recovery theorem: above an explicit threshold depending only on $S$,
$\mu$, and $\beta$, every global minimizer of $U_{\mathrm{aSB}}$ has
sign vector equal to an Ising minimizer. This gives a direct
continuous-to-discrete consistency theorem for the external-field aSB
landscape.

\item For the bSB-type confinement family, we analyze the soft
objectives $U_{p,\alpha}$ and their hard-box limit. We prove
epi-convergence of the confinement term, identify the limiting
hard-box variational problem, show that under a positive-semidefinite
condition the limiting minimization problem reduces to the Ising
problem on the vertices, and derive frozen-set reduction and
persistent-freezing principles for the late-stage regime.

\item Guided by these reduction principles, we propose iSTAR (Ising
Stable-set Tail-Aware Reduction), an active-set reduction framework
for SB-type continuous Ising solvers. Its primary certified
implementation is tested on bSB, where coordinates satisfying the
robust freezing condition are progressively removed from the active
dynamics and folded into the induced field. On the Goto et
al.~\cite{ref22} \href{https://www.science.org/doi/suppl/10.1126/sciadv.abe7953}{Table~S2} G1--G54 protocol with nonzero external
fields, this online certified version triggers in all $540$
seed--instance runs, matches the same-seed full bSB baseline in all
runs, and removes on average $56.32\%$ of the dense interaction work.
Fixed-grid probe sweeps are used as diagnostic evidence for the
breadth of late-tail reducibility, while supporting aSB/dSB
experiments indicate that the same principle is not tied to one
simulated bifurcation variant.
\end{enumerate}

Section~\ref{sec:theory} develops the variational recovery theory,
including the large-$\alpha$ aSB recovery theorem, the bSB
hard-confinement reduction theorem, the frozen-set reduction principle,
and the finite-time freezing guarantee. Section~\ref{sec:methods}
describes the solvers, the iSTAR algorithm, and the benchmark protocol,
building on the theoretical foundations of Section~\ref{sec:theory}.
Section~\ref{sec:results} reports the experimental evidence.
Section~\ref{sec:discussion} discusses interpretation, limitations,
and next steps.

\section{Theory}
\label{sec:theory}

\subsection{Variational recovery principles}

We develop the variational mechanism behind the reduction. The
physical picture is simple. Simulated bifurcation starts from
continuous oscillator amplitudes, but the useful information at late
times is mostly discrete: each coordinate has chosen one of two sign
branches. In the aSB model, this branch selection is created by a
smooth quartic double-well landscape whose wells move outward as the
bifurcation parameter grows. In the bSB model, the same sign selection
is enforced by saturation at the box boundary. In both cases, once the
amplitude degree of freedom has stabilized, the remaining question is
which sign pattern is selected; the variational statements below
explain when that sign pattern is ordered by the original Ising
energy.

\subsubsection{aSB landscape recovery.}

The adiabatic simulated bifurcation model gives a smooth quartic
relaxation of the Ising problem. In the normalized variables used
here, the associated external-field potential is
\begin{equation}
U_{\mathrm{aSB}}(\mathbf{x}) =
\sum_{i=1}^{n} \tfrac{1}{4} x_i^4
+ \tfrac{\beta - \alpha^2}{2} \mathbf{x}^{\mathsf{T}} \mathbf{x}
- \tfrac{1}{2} \mathbf{x}^{\mathsf{T}} S \mathbf{x}
- \alpha \mu \cdot \mathbf{x},
\quad \mathbf{x} \in \mathbb{R}^n,
\label{eq:pote}
\end{equation}
where $\alpha$ is the bifurcation parameter and $\beta > 0$ is fixed.
The zero-field recovery theory was developed
in~\cite{ref15}; the external-field term is the part needed for the
reduction framework in this paper.

The physical role of the pumping parameter is to separate the two
branches of each oscillator. When $\alpha$ is small, the origin is
energetically relevant and the continuous state does not yet carry a
reliable spin readout. As $\alpha$ grows, the quartic potential
creates two deep wells in each coordinate, and the coupling terms tilt
the product of these wells so that different sign patterns have
different depths. In the large-$\alpha$ regime, the wells of
$U_{\mathrm{aSB}}$ concentrate near the lifted spin states
$\alpha \mathbf{v}$, $\mathbf{v} \in \{-1,1\}^n$. At those lifted
states the continuous and Ising objectives share the same ordering, a
fact that follows from a direct algebraic computation.

\begin{lemma}
\label{lem:lifted}
For every spin configuration $\mathbf{v} \in \{-1,1\}^n$,
\begin{equation}
U_{\mathrm{aSB}}(\alpha \mathbf{v}) = C_{\alpha,\beta,n} + \alpha^2 E(\mathbf{v}),
\label{eq:lift-order}
\end{equation}
where
$C_{\alpha,\beta,n} = \tfrac{n}{4}\alpha^4 + \tfrac{n}{2}(\beta-\alpha^2)\alpha^2$
is independent of $\mathbf{v}$. Consequently, the ordering of the
continuous potential at the lifted spin states coincides with the
ordering of the Ising energy, amplified by $\alpha^2$.
\end{lemma}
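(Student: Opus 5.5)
The plan is to substitute $\mathbf{x} = \alpha\mathbf{v}$ directly into \eqref{eq:pote} and evaluate each of the four terms separately. The only structural fact needed is $v_i^2 = 1$ for every $i$, since $\mathbf{v}\in\{-1,1\}^n$. This identity makes the two pure-amplitude terms independent of the sign pattern.

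The steps are as follows. First, the quartic term gives $\sum_i \tfrac14 \alpha^4 v_i^4 = \tfrac{n}{4}\alpha^4$. Second, $\mathbf{x}^{\mathsf T}\mathbf{x} = \alpha^2 \sum_i v_i^2 = n\alpha^2$, so the quadratic confinement term equals $\tfrac{n}{2}(\beta-\alpha^2)\alpha^2$. Together these two terms give exactly $C_{\alpha,\beta,n}$, which depends only on $\alpha$, $\beta$ and $n$. Third, the coupling term is homogeneous of degree two, so it scales as $-\tfrac12 (\alpha\mathbf{v})^{\mathsf T} S(\alpha\mathbf{v}) = -\tfrac{\alpha^2}{2}\mathbf{v}^{\mathsf T}S\mathbf{v}$. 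Fourth, the field term carries an explicit factor $\alpha$, and one more factor comes from $\mathbf{x}=\alpha\mathbf{v}$, giving $-\alpha\,\mu\cdot(\alpha\mathbf{v}) = -\alpha^2\mu\cdot\mathbf{v}$.

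Adding the third and fourth contributions gives $\alpha^2\bigl(-\tfrac12\mathbf{v}^{\mathsf T}S\mathbf{v} - \mu\cdot\mathbf{v}\bigr) = \alpha^2 E(\mathbf{v})$ by \eqref{eq:ising}, which proves \eqref{eq:lift-order}. The ordering claim then follows because the map $t\mapsto C_{\alpha,\beta,n}+\alpha^2 t$ is strictly increasing when $\alpha>0$. Hence $U_{\mathrm{aSB}}(\alpha\mathbf{v}) \le U_{\mathrm{aSB}}(\alpha\mathbf{w})$ if and only if $E(\mathbf{v})\le E(\mathbf{w})$. In the degenerate case $\alpha=0$, all lifted states collapse to the origin and the amplification factor is zero, so the ordering statement should be read for $\alpha>0$.

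There is no genuine obstacle here. The one point that needs care is the external-field term: the scaling $-\alpha\mu\cdot\mathbf{x}$ in \eqref{eq:pote} is exactly what makes the field term scale as $\alpha^2$, the same as the coupling term. Without that extra factor of $\alpha$, the two parts of $E$ would scale differently and the ordering would not be preserved.
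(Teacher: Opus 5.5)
Your proposal is correct and follows the paper's proof of Lemma~\ref{lem:lifted_id_app}: substitute $\alpha\mathbf{v}$ into \eqref{eq:pote}, use $v_i^4=1$ and $\mathbf{v}^{\mathsf{T}}\mathbf{v}=n$ to isolate $C_{\alpha,\beta,n}$, and collect the $\alpha^2$-scaled coupling and field terms into $\alpha^2E(\mathbf{v})$. Your remark that the ordering statement needs $\alpha>0$ is a harmless refinement that the paper leaves implicit.
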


\noindent Thus, at the level of branch centers, aSB is a
large-amplitude embedding of the same Ising energy. The only remaining
issue is to rule out global minimizers away from the lifted spin wells
and to control the error between each well minimizer and its center.

We state the recovery theorem with the explicit threshold used in the
proof. Let $R_* = \max_i \sum_j |s_{ij}|$ be the maximum row-sum
coupling scale and $M_* = \max_i |\mu_i|$ the external-field scale.
Let $\Delta_*$ be the smallest nonzero separation between two Ising
energies, with $\Delta_* = 1$ if the Ising energy is constant on
$\mathcal{C}$, and define
$C_* = n(5/4 + 3|\beta|/2 + 3 R_*/2 + M_*)$. We use the explicit
threshold
\begin{equation}
\alpha_* := \max\left\{
1,\; C_*,\; \frac{2 C_*}{\Delta_*},\;
5(2 R_* + 2|\beta| + M_* + 1),\;
\sqrt{R_* + |\beta| + \tfrac{M_*}{2}},\;
\sqrt{3 + |\beta| + R_*}
\right\}.
\label{eq:astar}
\end{equation}

\begin{theorem}
\label{thm:asb_main}
For fixed $S$, $\mu$, and $\beta$, let $U_{\mathrm{aSB}}$ be the
external-field quartic potential in \eqref{eq:pote}. If
$\alpha > \alpha_*$ and $\mathbf{x}_0$ is a global minimizer of
$U_{\mathrm{aSB}}$, then $\operatorname{sgn}(\mathbf{x}_0)$ is a global
minimizer of the Ising problem \eqref{eq:ising}.
\end{theorem}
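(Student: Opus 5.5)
The plan is to compare any global minimizer $\mathbf{x}_0$ with the lifted spin states of Lemma~\ref{lem:lifted}, in three stages. First, every coordinate of $\mathbf{x}_0$ lies in a narrow band around $\pm\alpha$. Second, on that band, $U_{\mathrm{aSB}}$ differs from its value at the nearest lifted center by a controlled amount. Third, this error is smaller than the Ising gap amplified by $\alpha^2$.

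\textbf{Step 1 (localization).} Rescale $\mathbf{x}=\alpha\mathbf{z}$ and write $\varepsilon=\alpha^{-1}$. Then
\[
U_{\mathrm{aSB}}(\alpha\mathbf{z})=\alpha^4\Bigl[\sum_i\bigl(\tfrac14 z_i^4-\tfrac12 z_i^2\bigr)+\varepsilon^2\bigl(\tfrac{\beta}{2}\|\mathbf{z}\|^2-\tfrac12\mathbf{z}^{\mathsf T}S\mathbf{z}-\mu\cdot\mathbf{z}\bigr)\Bigr].
\]
A global minimizer satisfies the stationarity equation $z_i^3-z_i=\varepsilon^2\bigl(-\beta z_i+(S\mathbf{z})_i+\mu_i\bigr)$.
\begin{itemize}
\item Comparing with $U(\alpha\mathbf{v})$ bounds $\|\mathbf{z}\|_\infty$: a crude bound $|z_i|\le 2$ follows from the thresholds $\sqrt{R_*+|\beta|+M_*/2}$ and $\sqrt{3+|\beta|+R_*}$.
\item The right-hand side of the stationarity equation is then $O(\varepsilon^2(2R_*+2|\beta|+M_*))$.
\item Hence each $z_i$ is within $O(\varepsilon^2)$ of a root of $z^3-z$, namely $0$ or $\pm1$. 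The constant $5(2R_*+2|\beta|+M_*+1)$ should make this perturbation small enough to separate the three root branches.
\item Next rule out coordinates near $0$. Moving such a coordinate to $\pm1$ lowers the bracket by about $\tfrac14$ in the leading double-well term. The coupling change is only $O(\varepsilon^2 R_*)$, so $\mathbf{x}_0$ would not be minimal.
\end{itemize}
Hence $\mathbf{z}_0=\mathbf{v}+\boldsymbol{\delta}$ with $\mathbf{v}=\operatorname{sgn}(\mathbf{x}_0)$ and $|\delta_i|\lesssim \varepsilon^2(2R_*+|\beta|+M_*)$.

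\textbf{Step 2 (well-energy estimate).} Expand $U$ around the center $\alpha\mathbf{v}$. In $\mathbf{z}$-variables the double-well term is stationary at $\pm1$, so the quadratic form $\sum_i \delta_i^2$ is nonnegative and the linear term comes only from the $\varepsilon^2$-terms. This gives the two-sided bound
\[
|U(\mathbf{x}_0)-U(\alpha\mathbf{v})|\le \alpha^4\cdot O\bigl(\varepsilon^2\|\boldsymbol\delta\|_1(\ldots)\bigr)\le C_*\,\alpha .
\]
Here $C_*=n(5/4+3|\beta|/2+3R_*/2+M_*)$ is designed to absorb the row sums of the $\beta$, $S$ and $\mu$ contributions plus the quartic remainder. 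Symmetrically, the value of $\min U$ restricted to the well of $\mathbf{w}$ satisfies $U(\alpha\mathbf{w})-C_*\alpha\le \min_{\text{well}}U\le U(\alpha\mathbf{w})$. The upper bound is trivial by evaluation at the center.

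\textbf{Step 3 (conclusion).} Suppose $\mathbf{v}$ is not an Ising minimizer, and let $\mathbf{w}$ be an Ising minimizer. Then $E(\mathbf{v})-E(\mathbf{w})\ge\Delta_*$. Combining Lemma~\ref{lem:lifted} with Step 2,
\[
U(\mathbf{x}_0)\ge C_{\alpha,\beta,n}+\alpha^2E(\mathbf{v})-C_*\alpha\ge U(\alpha\mathbf{w})+\alpha^2\Delta_*-C_*\alpha>U(\alpha\mathbf{w}).
\]
The last inequality holds because $\alpha>2C_*/\Delta_*$. This contradicts global minimality, so $\operatorname{sgn}(\mathbf{x}_0)$ is an Ising minimizer.

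\textbf{Main obstacle.} The main difficulty is Step 1, in particular excluding near-zero coordinates and obtaining the error bound with explicit constants matching $\alpha_*$. I would first attempt the coordinatewise one-dimensional comparison: fix all other coordinates and treat $x_i$ as minimizing $\tfrac14x^4+\tfrac{\beta-\alpha^2}{2}x^2-h_ix$ with $|h_i|\le R_*\|\mathbf{x}\|_\infty+\alpha M_*$. The global minimizer of this tilted quartic is explicit, and it lies in $[\alpha-c/\alpha,\alpha+c/\alpha]$ up to sign. This gives Steps 1 and 2 simultaneously.
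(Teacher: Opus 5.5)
Your proposal is correct and has the same three-stage skeleton as the paper's proof. First, every coordinate is localized to within $O(1/\alpha)$ of $\pm\alpha$ via the critical-point equation and the $5B_*/\alpha$ root separation (Lemma~\ref{lem:localization}). Second, a crude expansion about $\alpha\mathbf{v}$ gives the $C_*\alpha$ well-energy error (Lemma~\ref{lem:local_well}). Third, the $\alpha^2$ gap amplification of Lemma~\ref{lem:lifted} beats that error.

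There are three local differences.

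To exclude the branch near $0$, the paper uses the second-order necessary condition: the diagonal Hessian entry $3x_i^2+\beta-\alpha^2$ is negative when $|x_i|<1$. This is the only place $\alpha_{\mathrm{sep}}=\sqrt{3+|\beta|+R_*}$ enters, and it makes the localization valid for every local minimum. You instead use global minimality, made exact by your one-dimensional slice: since $s_{ii}=0$, each $x_{0,i}$ must globally minimize $\tfrac14t^4+\tfrac{\beta-\alpha^2}{2}t^2-h_it$. This suffices for the theorem.

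For the crude bound $|x_i|<2\alpha$, the paper uses stationarity at a coordinate of maximal modulus together with $\alpha^2>R_*+|\beta|+M_*/2$ only. It does not compare with $U_{\mathrm{aSB}}(\alpha\mathbf{v})$ and does not use $\alpha_{\mathrm{sep}}$.

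The most substantive difference is your Step~3. You compare $\mathbf{x}_0$ directly with the admissible point $\alpha\mathbf{w}$, so you need only the one-sided bound \eqref{eq:S1} at $\mathbf{x}_0$ and only $\alpha>C_*/\Delta_*$. The paper instead routes through the well minimizers $\mathbf{x}^*_{\mathbf{v}_0}$ and $\mathbf{x}^*_{\mathbf{v}'}$ (Lemma~\ref{lem:local_well}, Proposition~\ref{prop:global_align}). It applies \eqref{eq:S3} on both sides, which is where the factor $2$ in $\alpha_{\mathrm{gap}}=2C_*/\Delta_*$ comes from. As a result, the two-sided estimate on the well of $\mathbf{w}$ in your Step~2 is never used and can be dropped.

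Your version is the shorter argument for this theorem. The paper's version buys a per-sign-pattern description of the wells and a localization statement that holds for all local minima.
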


The proof is given in Appendix~\ref{sec:app_a1} and has three steps.
First, a barrier estimate shows that each lifted spin state has a
nearby interior well whose value differs from the lifted value only by
$O(\alpha)$. Second, the critical-point equation and Hessian test
exclude local minima on the central branch near the origin, so local
minima must lie near $\pm \alpha$ in each coordinate. Third, the
lifted-state identity \eqref{eq:lift-order} amplifies Ising energy
gaps by $\alpha^2$, which dominates the $O(\alpha)$ well error for
$\alpha > \alpha_*$. Hence a continuous global minimizer cannot decode
to a nonoptimal spin vector. Here $R_*$ controls the coupling scale,
$M_*$ controls the external-field scale, $\Delta_*$ is the smallest
nonzero Ising energy separation, and $C_*$ collects the uniform
well-error constants used in the proof. The threshold $\alpha_*$ is
intentionally explicit rather than sharp; its role is to make the
recovery statement quantitative.

This result is a static variational recovery theorem rather than a
finite-time trajectory theorem. Its role in the present paper is to
justify the continuous-to-discrete alignment behind aSB-type
reductions with external fields. The late-stage algorithmic reduction
studied below uses the same principle in a different form: stable
coordinates are treated as fixed spins, and their influence becomes an
induced external field on the unresolved subsystem.

\subsubsection{bSB hard-confinement reduction.}

The ballistic version of simulated bifurcation drops the quartic
term of aSB and instead enforces $|x_i| \le 1$ by a hard saturation
rule~\cite{ref22}. At the
variational level, this leads to a hard-confinement model: the
quadratic drift is retained inside the box, while states outside the
box are excluded. We write
\begin{equation}
U_{\Box,\alpha}(\mathbf{x}) :=
\tfrac{1 - \alpha^2}{2} \|\mathbf{x}\|^2
- \tfrac{1}{2} \mathbf{x}^{\mathsf{T}} S \mathbf{x}
- \mu \cdot \mathbf{x},
\qquad \mathbf{x} \in [-1,1]^n.
\label{eq:hardbox}
\end{equation}
On the open box,
$\nabla U_{\Box,\alpha}(\mathbf{x}) = (1 - \alpha^2)\mathbf{x} - S\mathbf{x} - \mu$,
which is the static interior field associated with the normalized bSB
confinement.

The physical interpretation is that bSB removes the need to resolve
large amplitudes. Once a coordinate reaches the wall, the clipping
rule prevents it from drifting farther outward and the momentum reset
removes the outward motion. The relevant amplitude is therefore fixed
at $\pm 1$, and the remaining optimization problem is a sign-selection
problem on the faces and vertices of the box. The hard-box objective
is the static variational object that retains the interior force field
while replacing the dynamical saturation rule by the constraint
$\mathbf{x} \in [-1,1]^n$.

To connect this constrained problem with smooth soft-spin landscapes,
we use the soft wall penalty
$\Phi_p(\mathbf{x}) := \sum_{i=1}^n |x_i|^{2p}/(2p)$, with $p \ge 2$.
The corresponding soft-confinement objective is
$U_{p,\alpha}(\mathbf{x}) := \Phi_p(\mathbf{x}) + U_{\Box,\alpha}(\mathbf{x})$.

\begin{proposition}
\label{prop:epi}
As $p \to \infty$, the sequence $\{\Phi_p\}$ epi-converges to the
indicator $\delta_{[-1,1]^n}$ of the box $[-1,1]^n$: it vanishes
uniformly on the box and diverges pointwise outside. Consequently,
$U_{p,\alpha}$ epi-converges to
$U_{\Box,\alpha} + \delta_{[-1,1]^n}$.
\end{proposition}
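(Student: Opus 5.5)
We verify the two defining inequalities of epi-convergence (Attouch/Rockafellar–Wets) directly for $\Phi_p$ on $\mathbb{R}^n$. Then we transfer the result to $U_{p,\alpha}$ using the fact that $U_{\Box,\alpha}$, read as the quadratic polynomial on all of $\mathbb{R}^n$, is continuous. The target statement is: for every $\mathbf{x}\in\mathbb{R}^n$, (i) $\liminf_{p\to\infty}\Phi_p(\mathbf{x}_p)\ge \delta_{[-1,1]^n}(\mathbf{x})$ for every sequence $\mathbf{x}_p\to\mathbf{x}$, and (ii) there is a recovery sequence $\mathbf{x}_p\to\mathbf{x}$ with $\limsup_p \Phi_p(\mathbf{x}_p)\le \delta_{[-1,1]^n}(\mathbf{x})$. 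Along the way we record the two elementary facts quoted in the statement.

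First, the uniform vanishing on the box. If $\mathbf{x}\in[-1,1]^n$, then $|x_i|^{2p}\le 1$, so $0\le \Phi_p(\mathbf{x})\le n/(2p)\to 0$ uniformly. Second, pointwise divergence outside the box. If some $|x_i|>1$, then $|x_i|^{2p}/(2p)\to\infty$ because exponential growth beats linear growth.

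The recovery sequence is the constant one, $\mathbf{x}_p=\mathbf{x}$. On the box, the uniform bound gives $\limsup\Phi_p(\mathbf{x})=0$. Off the box, the right-hand side of (ii) is $+\infty$, so (ii) holds trivially.

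The liminf inequality is the only step needing care. On the box it is immediate, since $\Phi_p\ge 0=\delta$. Off the box, choose $i$ with $|x_i|=1+2\eta>1$. For any $\mathbf{x}_p\to\mathbf{x}$ we eventually have $|(x_p)_i|\ge 1+\eta$. Hence $\Phi_p(\mathbf{x}_p)\ge (1+\eta)^{2p}/(2p)\to\infty$, because all terms of $\Phi_p$ are nonnegative. This is where pointwise divergence alone would not suffice: we need divergence uniform on a neighborhood of $\mathbf{x}$. The elementary bound above provides exactly that, so this step is the main (mild) obstacle.

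For the consequence, write $U_{p,\alpha}=\Phi_p+Q$, where $Q(\mathbf{x})=\tfrac{1-\alpha^2}{2}\|\mathbf{x}\|^2-\tfrac12\mathbf{x}^{\mathsf T}S\mathbf{x}-\mu\cdot\mathbf{x}$ is extended to $\mathbb{R}^n$. Because $Q$ is continuous and finite, epi-convergence is stable under adding it: along any $\mathbf{x}_p\to\mathbf{x}$ we have $Q(\mathbf{x}_p)\to Q(\mathbf{x})$, so both the liminf and limsup inequalities carry over with $Q(\mathbf{x})$ added. This holds with the convention $\infty+$finite$=\infty$ off the box. The epi-limit is therefore $Q+\delta_{[-1,1]^n}$, which is precisely $U_{\Box,\alpha}+\delta_{[-1,1]^n}$ in the sense of \eqref{eq:hardbox}.
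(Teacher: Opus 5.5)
Your proof is correct and matches the paper's argument (Lemma~\ref{lem:epi} in the appendix). The paper proves the liminf inequality off the box with the margin $(|x_i|-1)/2$, giving $\Phi_p(\mathbf{x}_p)\ge(1+\eta)^{2p}/(2p)\to\infty$, and uses the constant recovery sequence with the bound $0\le\Phi_p\le n/(2p)$ on the box, exactly as you do; the only addition on your side is writing out the transfer to $U_{p,\alpha}$ by adding the continuous quadratic part, which the paper states without proof and which you justify correctly.
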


\noindent Thus $U_{p,\alpha}$ is a smooth bridge from the quartic
soft-spin picture to the hard-box problem selected by bSB saturation.

The hard-box problem admits a direct discrete interpretation via the
following algebraic identity.

\begin{lemma}
\label{lem:vertex}
For every spin vertex $\mathbf{v} \in \{-1,1\}^n$,
\begin{equation}
U_{\Box,\alpha}(\mathbf{v}) = \tfrac{1 - \alpha^2}{2}\, n + E(\mathbf{v}).
\label{eq:vertex}
\end{equation}
Hence the hard-box objective at a vertex differs from the original
Ising energy only by a constant shift that does not depend on the
spin configuration.
\end{lemma}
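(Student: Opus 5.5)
The identity \eqref{eq:vertex} follows from direct substitution into \eqref{eq:hardbox}, in the same way as Lemma~\ref{lem:lifted}. The plan is to evaluate each of the three terms of $U_{\Box,\alpha}$ at a spin vertex $\mathbf{v}\in\{-1,1\}^n$ and then group them.

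First, the quadratic confinement term. Every coordinate satisfies $v_i^2=1$, so $\|\mathbf{v}\|^2=\sum_i v_i^2=n$. The first term is therefore $\tfrac{1-\alpha^2}{2}\,n$, which does not depend on $\mathbf{v}$. This is the only place where the vertex structure is used. Note also that $\mathbf{v}\in[-1,1]^n$, so $\mathbf{v}$ lies in the domain of $U_{\Box,\alpha}$ and the indicator $\delta_{[-1,1]^n}$ from Proposition~\ref{prop:epi} vanishes there.

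Second, the coupling and field terms, $-\tfrac12\mathbf{v}^{\mathsf T}S\mathbf{v}-\mu\cdot\mathbf{v}$. These coincide verbatim with the definition of $E(\mathbf{v})$ in \eqref{eq:ising}. The diagonal convention $s_{ii}=0$ is not even needed here, because the same quadratic form appears on both sides. Adding the two pieces gives $U_{\Box,\alpha}(\mathbf{v})=\tfrac{1-\alpha^2}{2}n+E(\mathbf{v})$.

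The "hence" clause follows because the shift $\tfrac{1-\alpha^2}{2}n$ depends only on $n$ and $\alpha$. Consequently, for any $\mathbf{v},\mathbf{w}\in\mathcal{C}$ we have $U_{\Box,\alpha}(\mathbf{v})-U_{\Box,\alpha}(\mathbf{w})=E(\mathbf{v})-E(\mathbf{w})$, so the restriction of the hard-box objective to the vertices orders configurations exactly as the Ising energy does. Nothing here is a genuine obstacle; the only care needed is to record that the normalization in \eqref{eq:hardbox} places the factor $\mu$ without the $\alpha$ that appears in \eqref{eq:pote}. For this reason, unlike Lemma~\ref{lem:lifted}, no $\alpha^2$ amplification of $E$ appears.
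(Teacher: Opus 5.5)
Your proof is correct and follows the paper's argument. The paper states this as a direct algebraic identity (substituting $\|\mathbf{v}\|^2 = n$, as in Lemma~\ref{lem:lifted_id_app}), and the remaining coupling and field terms coincide with $E(\mathbf{v})$. Your observation that the field enters \eqref{eq:hardbox} without the factor $\alpha$, so no $\alpha^2$ amplification appears, is also accurate.
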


\noindent If $(\alpha^2 - 1) I + S$ is positive semidefinite, then the
Hessian $\nabla^2 U_{\Box,\alpha} = (1-\alpha^2)I - S$ is negative
semidefinite, so $U_{\Box,\alpha}$ is concave on the box. Physically,
this means that the hard-confinement landscape has no interior basin
that can beat the boundary sign states: a minimizer may be chosen at a
corner of the saturation box. With Lemma~\ref{lem:vertex}, the vertex
minimizers of $U_{\Box,\alpha}$ are therefore exactly the Ising
minimizers. These facts are collected in the following theorem.

\begin{theorem}
\label{thm:bsb_main}
Assume $(\alpha^2 - 1) I + S \succeq 0$.

\begin{description}
\item[(i)] \emph{Hard-box reduction.} $U_{\Box,\alpha}$ is concave on
$[-1,1]^n$. Moreover,
$\arg\min_{\mathbf{x}\in[-1,1]^n} U_{\Box,\alpha}(\mathbf{x})
\cap \{-1,1\}^n \neq \varnothing$, and the vertex minimizers of
$U_{\Box,\alpha}$ are exactly the minimizers of the Ising problem
\eqref{eq:ising}.

\item[(ii)] \emph{Asymptotic readout.} Let $\mathbf{x}_p$ be global
minimizers of $U_{p,\alpha}$ with $p \to \infty$. Assume the following
compactness property: for every spin vector $\mathbf{v} \in \{-1,1\}^n$
such that $\operatorname{sgn}(\mathbf{x}_p) = \mathbf{v}$ for
infinitely many indices $p$, there exists a compact set
$K_{\mathbf{v}} \subset [-1,1]^n$ containing all points of that
recurring subsequence. Then for all sufficiently large $p$,
$\operatorname{sgn}(\mathbf{x}_p)$ is a global minimizer of the
original Ising problem.
\end{description}
\end{theorem}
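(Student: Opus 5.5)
For part (i) I will argue directly from the Hessian. The Hessian $\nabla^2 U_{\Box,\alpha} = (1-\alpha^2)I - S = -\bigl((\alpha^2-1)I + S\bigr)$ is constant, and under the hypothesis it is negative semidefinite. Hence $U_{\Box,\alpha}$ is concave on the convex polytope $[-1,1]^n$.

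The box is compact and $U_{\Box,\alpha}$ is continuous, so a minimum is attained. Every $\mathbf{x}\in[-1,1]^n$ is a convex combination $\sum_{\mathbf{v}}\lambda_{\mathbf{v}}\mathbf{v}$ of vertices. Concavity then gives $U_{\Box,\alpha}(\mathbf{x}) \ge \sum \lambda_{\mathbf{v}} U_{\Box,\alpha}(\mathbf{v}) \ge \min_{\mathbf{v}\in\mathcal{C}} U_{\Box,\alpha}(\mathbf{v})$. Therefore the box minimum equals the vertex minimum, and some vertex lies in the argmin.

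A vertex is a box minimizer iff it minimizes $U_{\Box,\alpha}$ over $\mathcal{C}$. By Lemma~\ref{lem:vertex} this holds iff it minimizes $E$, since the shift $\tfrac{1-\alpha^2}{2}n$ does not depend on $\mathbf{v}$. This proves (i).

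For part (ii) I will use epi-convergence (Proposition~\ref{prop:epi}) together with the fact that on the box $\Phi_p\to 0$ uniformly, since $0\le\Phi_p\le n/(2p)$ there. Let $m := \min_{[-1,1]^n} U_{\Box,\alpha}$, and fix an Ising minimizer $\mathbf{v}^\star$.

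\textbf{Upper bound.} $U_{p,\alpha}(\mathbf{x}_p) \le U_{p,\alpha}(\mathbf{v}^\star) = m + 1/(2p)$, so $\limsup U_{p,\alpha}(\mathbf{x}_p)\le m$.

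\textbf{Argument by contradiction.} Suppose the claim fails. Since $\mathcal{C}$ is finite, some non-optimal $\mathbf{v}$ equals $\operatorname{sgn}(\mathbf{x}_p)$ along a subsequence. By the compactness hypothesis these $\mathbf{x}_p$ lie in $K_{\mathbf{v}}\subset[-1,1]^n$, so after extraction $\mathbf{x}_p\to\bar{\mathbf{x}}\in K_{\mathbf{v}}$. Because $\Phi_p \ge 0$ and $U_{\Box,\alpha}$ is continuous, we get $U_{\Box,\alpha}(\bar{\mathbf{x}}) \le \liminf U_{p,\alpha}(\mathbf{x}_p) \le m$. So $\bar{\mathbf{x}}$ is a hard-box minimizer, and $\lim U_{\Box,\alpha}(\mathbf{x}_p)=m$.

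The remaining task is to derive a contradiction from the sign pattern. Every $\mathbf{x}_p$ in the subsequence lies in the closed orthant face region $O_{\mathbf{v}}:=\{\mathbf{x}\in[-1,1]^n:\ v_i x_i\ge 0\ \forall i\}$. This region is the cube spanned by the vertices $\mathbf{w}$ with $w_i\in\{v_i\}$ where $v_i x_i>0$, plus free coordinates where $x_i = 0$. I would first handle the simplest case, $\operatorname{sgn}$ with the convention $\operatorname{sgn}(0)=1$, so $O_{\mathbf{v}}$ is the subcube $\prod_i[0,v_i]$.

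On this subcube, concavity of $U_{\Box,\alpha}$ bounds $U_{\Box,\alpha}(\mathbf{x})$ below by a convex combination of values at the subcube's vertices. Those vertices have coordinates in $\{0,v_i\}$, so they are not full spin vertices, and this is the main obstacle. To avoid it, I would instead use that $\mathbf{x}_p$ is a critical point of $U_{p,\alpha}$. The gradient condition reads
\[
x_i^{2p-1}|x_i|^{0} + (1-\alpha^2)x_i - (S\mathbf{x})_i - \mu_i = 0 .
\]
Passing to the limit would force each coordinate of $\bar{\mathbf{x}}$ either to saturate at $|\bar x_i|=1$ with sign $v_i$, or to satisfy the interior equation. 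The nonsaturated coordinates lie on a face along which $U_{\Box,\alpha}$ is concave and stationary, hence constant along the corresponding edge directions. Moving those coordinates to $v_i$ would then give a vertex of value $m$ with sign $\mathbf{v}$.

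By (i), that vertex is an Ising minimizer, which contradicts the choice of $\mathbf{v}$ as non-optimal. I expect this step to be the delicate one. It requires controlling coordinates with $\bar x_i=0$, or with $|\bar x_i|<1$, and ensuring the moved vertex really has sign $\mathbf{v}$. I would try to resolve it via the face-constancy argument: a concave function attaining its minimum at a relative-interior point of a face is constant on that face.
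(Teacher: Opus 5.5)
Your part (i) matches the paper's proof. So does the skeleton of part (ii), up to the point where $\bar{\mathbf{x}}$ is a hard-box minimizer with $v_i\bar x_i\ge 0$ for all $i$. One harmless slip: $U_{p,\alpha}(\mathbf{v}^\star)=m+n/(2p)$, not $m+1/(2p)$.

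\textbf{The gap.} The gap is the final step, which you leave open, and the route you sketch for it does not work as stated. The obstacle you found is real for the orthant region $O_{\mathbf{v}}=\prod_i[0,v_i]$, but that region is not a face of $[-1,1]^n$. The critical-point detour does not repair this. For a concave function, stationarity along a line through $\bar{\mathbf{x}}$ means a \emph{maximum} along that line, not constancy; constancy comes only from combining concavity with minimality. Even granting constancy along each coordinate line through $\bar{\mathbf{x}}$, you would still need a further argument to reach $\mathbf{v}$, which differs from $\bar{\mathbf{x}}$ in several coordinates at once. The limiting critical-point equations are not needed at all.

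\textbf{The missing step.} Use the smallest face of the \emph{full} cube that contains $\bar{\mathbf{x}}$, namely $F:=\{\mathbf{x}\in[-1,1]^n:\ x_i=\bar x_i \text{ whenever } |\bar x_i|=1\}$.
\begin{enumerate}
\item Every free coordinate has $|\bar x_i|<1$, so $\bar{\mathbf{x}}$ lies in the relative interior of $F$. Coordinates with $\bar x_i=0$ need no special treatment.
\item Weak sign preservation forces $\bar x_i=v_i$ whenever $|\bar x_i|=1$, so $\mathbf{v}\in F$.
\item Your face-constancy lemma now closes the argument. For $\mathbf{y}\in F$ and small $\varepsilon>0$, the point $\mathbf{z}=\bar{\mathbf{x}}+\varepsilon(\bar{\mathbf{x}}-\mathbf{y})$ lies in $F$, and $\bar{\mathbf{x}}=t\mathbf{y}+(1-t)\mathbf{z}$ with $t=\varepsilon/(1+\varepsilon)$. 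Concavity and minimality give $m\ge tU_{\Box,\alpha}(\mathbf{y})+(1-t)U_{\Box,\alpha}(\mathbf{z})\ge m$, so $U_{\Box,\alpha}\equiv m$ on $F$.
\item In particular $U_{\Box,\alpha}(\mathbf{v})=m$. So $\mathbf{v}$ is a minimizing vertex and, by (i), an Ising minimizer, which is the desired contradiction.
\end{enumerate}

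\textbf{Relation to the paper.} This is the paper's argument. The paper writes $\mathbf{x}^*$ as a convex combination of the vertices of its face and concludes that every positively weighted vertex has value $m$. Its step ``in particular $U_{\Box,\alpha}(\mathbf{v})=U_{\Box,\alpha}(\mathbf{x}^*)$'' tacitly needs $\lambda_{\mathbf{v}}>0$. That holds precisely because $\mathbf{x}^*$ lies in the relative interior of the face: take product weights $(1\pm x_i^*)/2$ on the free coordinates. Your lemma, stated on $F$ rather than on $O_{\mathbf{v}}$, makes this explicit.
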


The first part follows from concavity of the hard-box objective on the
box and the vertex identity above. The same soft-to-hard comparison
gives the asymptotic readout statement in the second part. Under the
compactness hypothesis on recurring sign classes, any subsequential
limit of soft global minimizers is a hard-box minimizer. The concavity
argument then pushes such a limit to a minimizing vertex in the same
sign face, and the vertex identity converts hard-box optimality into
Ising optimality. Hence nonoptimal sign readouts can occur only
finitely often along the soft family.

These statements are variational. They do not prove finite-time
convergence of the full bSB Hamiltonian dynamics with momentum,
clipping, and boundary reset. What they identify is the static
optimization problem associated with the saturated regime: once the
trajectory is effectively confined to the box, the remaining discrete
information is governed by the hard-box objective and by the induced
Ising energy on its vertices.

\subsection{Frozen-set reduction}

The variational results above and the reduction algorithm below play
different roles. The aSB recovery theorem and the bSB hard-box theorem
are global landscape statements: they explain why continuous
simulated-bifurcation landscapes can develop sign branches whose
readout is aligned with the Ising objective, but their sufficient
hypotheses need not hold along the finite benchmark schedules used
below. The algorithmic connection is local in time. Once a set of
coordinates has reached a saturated sign state and satisfies a robust
outward-margin condition, the bSB clipping update keeps those
coordinates frozen in subsequent steps. The direct algebraic step used
by iSTAR is then the frozen-set reduction: once a subset of signs is
fixed, its couplings become an induced external field on the
unresolved subsystem, and the original Ising instance reduces to a
lower-dimensional Ising problem.

For a frozen set $H \subset \{1, \dots, n\}$ with active complement
$Q = H^c$, fixed signs $\mathbf{v}_H \in \{-1,1\}^{|H|}$ on $H$, and a
bSB state at step $k$, define the certified outward drift and the
unresolved coupling envelope by
\begin{equation}
D_i^H(k) := (\alpha_k^2 - 1) + \mu_i v_i + \sum_{j \in H} s_{ij} v_i v_j,
\qquad
B_i^Q := \sum_{j \in Q} |s_{ij}|.
\label{eq:drift_envelope}
\end{equation}
The robust freezing margin is
\begin{equation}
\rho_i(k; H) := D_i^H(k) - B_i^Q .
\label{eq:robust_margin}
\end{equation}
A saturated coordinate with $x_i^{(k)} = v_i \in \{-1,1\}$ and
$y_i^{(k)} = 0$ is certified frozen whenever
$\rho_i(k; H) > 0$.

For diagnostic purposes we also define the one-spin stability gap
\begin{equation}
\Delta_i(\mathbf{v}) := E(\mathbf{v}^{(i)}) - E(\mathbf{v}) = 2 v_i (S \mathbf{v} + \mu)_i,
\label{eq:oneflipgap}
\end{equation}
where $\mathbf{v}^{(i)}$ is $\mathbf{v}$ with the $i$-th coordinate flipped.

\begin{theorem}
\label{thm:frozen}
Let $H \subset \{1, \dots, n\}$ be a frozen set with active complement
$Q = H^c$ and fixed signs $\mathbf{v}_H \in \{-1,1\}^{|H|}$ on $H$.
\begin{description}
\item[(i)] \emph{Algebraic reduction.} Under the constraint that spins
on $H$ are fixed at $\mathbf{v}_H$, minimizing the Ising energy
$E(\mathbf{v})$ over $\mathbf{v} \in \{-1,1\}^n$ is equivalent to
minimizing the reduced energy
$E_Q(\mathbf{v}_Q) := -\tfrac{1}{2} \mathbf{v}_Q^{\mathsf{T}} S_{QQ}
\mathbf{v}_Q - \mu_{\mathrm{eff}} \cdot \mathbf{v}_Q$ over
$\mathbf{v}_Q \in \{-1,1\}^{|Q|}$, where the induced field is
$\mu_{\mathrm{eff}} = \mu_Q + S_{QH} \mathbf{v}_H$.

\item[(ii)] \emph{One-step certified freezing.} If at step $k$ one has
$x_i^{(k)} = v_i \in \{-1,1\}$ and $y_i^{(k)} = 0$ for every
$i \in H$, and the robust freezing condition $\rho_i(k; H) > 0$
holds, then the bSB clipping step yields
$x_i^{(k+1)} = v_i$ and $y_i^{(k+1)} = 0$ for every $i \in H$.
\end{description}
\end{theorem}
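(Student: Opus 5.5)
Both parts are direct computations; I will do (i) by a block decomposition of the quadratic form and (ii) by a sign estimate on the one-step bSB force.

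\emph{Part (i).} Order coordinates as $(Q,H)$ and split $S$ into blocks $S_{QQ},S_{QH},S_{HQ}=S_{QH}^{\mathsf T},S_{HH}$, and $\mu$ into $\mu_Q,\mu_H$. For any $\mathbf{v}$ with $\mathbf{v}|_H=\mathbf{v}_H$, expand $E(\mathbf{v})$. The cross terms $-\tfrac12(\mathbf{v}_Q^{\mathsf T}S_{QH}\mathbf{v}_H+\mathbf{v}_H^{\mathsf T}S_{HQ}\mathbf{v}_Q)$ combine, by symmetry of $S$, into $-(S_{QH}\mathbf{v}_H)\cdot\mathbf{v}_Q$. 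This gives
\[
E(\mathbf{v}) = E_Q(\mathbf{v}_Q) + c_H,\qquad c_H := -\tfrac12\mathbf{v}_H^{\mathsf T}S_{HH}\mathbf{v}_H-\mu_H\cdot\mathbf{v}_H ,
\]
where $c_H$ does not depend on $\mathbf{v}_Q$. The map $\mathbf{v}_Q\mapsto(\mathbf{v}_Q,\mathbf{v}_H)$ is a bijection from $\{-1,1\}^{|Q|}$ onto the constrained feasible set. Hence the constrained minimizers and $\arg\min E_Q$ correspond exactly, and the optimal values differ by $c_H$.

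\emph{Part (ii).} I will use the explicit Euler update of the Appendix (Proposition~\ref{prop:one_step_app}), with $a_0=c_0=1$ and the field included. The momentum update is $y_i\leftarrow y_i+\Delta t\, f_i(\mathbf{x}^{(k)})$ with force $f_i(\mathbf{x})=(\alpha_k^2-1)x_i+(S\mathbf{x})_i+\mu_i$. The position update is $x_i\leftarrow x_i+\Delta t\,y_i$. The wall rule then sets $x_i\leftarrow\operatorname{sgn}(x_i)$ and $y_i\leftarrow 0$ whenever $|x_i|\ge 1$ (or $>1$, as in the Appendix).

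The key estimate: for $i\in H$, substitute $x_j^{(k)}=v_j$ for all $j\in H$ (including $j=i$, using $s_{ii}=0$) and $|x_j^{(k)}|\le 1$ for $j\in Q$. This gives
\[
v_i f_i(\mathbf{x}^{(k)}) = (\alpha_k^2-1)+\mu_iv_i+\sum_{j\in H}s_{ij}v_iv_j+\sum_{j\in Q}s_{ij}v_ix_j^{(k)}\ \ge\ D_i^H(k)-B_i^Q=\rho_i(k;H)>0 .
\]
The hypothesis is imposed simultaneously on all of $H$ at step $k$, so all frozen partners of $i$ sit exactly at their signs and the estimate holds for every $i\in H$ at once.

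With the symplectic order (momentum first), $y_i^{(k)}=0$ gives a new momentum $\Delta t f_i$ with sign $v_i$. Then $x_i=v_i+\Delta t^2 f_i$ satisfies $|x_i|>1$ with sign $v_i$, and the clipping step returns $x_i^{(k+1)}=v_i$, $y_i^{(k+1)}=0$.

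The main obstacle is matching the exact step ordering and the clipping convention of the Appendix. If positions are updated first, $x_i$ stays at $v_i$ and only the momentum becomes outward. In that case I would check that the wall rule (triggered at $|x_i|\ge1$, or at $|x_i|=1$ with outward $y_i$) still resets $y_i$ to $0$; if it does, the same sign estimate closes the argument. Nothing beyond this per-coordinate inequality is needed, since the $Q$-coordinates enter only through the bound $|x_j|\le 1$, which every bSB state satisfies.
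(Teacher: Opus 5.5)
Your proposal is correct and follows the paper's proof: part (i) is the same block decomposition of $E$ into $E_Q(\mathbf{v}_Q)$ plus a constant $C_H(\mathbf{v}_H)$ (Theorem~\ref{thm:frozen_app}), and part (ii) is the argument of Proposition~\ref{prop:one_step_app}. There you split $v_i\tilde{y}_i$ over $H$ and $Q$, bound the $Q$-terms using $|x_j^{(k)}|\le 1$, and conclude $v_i\tilde{x}_i = 1+\tau v_i\tilde{y}_i>1$, so the clip returns $(v_i,0)$. The appendix uses exactly the momentum-first update and the strict threshold $|\tilde{x}_i|>1$ that your main argument assumes, so your hedge about other step orderings is unnecessary.
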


\noindent Detailed proofs are given in Theorem~\ref{thm:frozen_app}
and Proposition~\ref{prop:one_step_app};
the persistent version follows by induction on the step
index, see Corollary~\ref{cor:persistent_app}.

The one-spin gap $\Delta_i(\mathbf{v})$ is used as a screening score, not as a
formal certificate. Large positive gaps are consistent with local
stability of the current sign pattern, whereas small gaps mark
coordinates for which a sign change has low discrete cost. The robust
freezing inequalities proved in the appendix are stronger: they
include the current confinement parameter and an $\ell^1$ envelope for
the possible influence of unresolved coordinates. The diagnostic
selector should be read as a practical surrogate for these sufficient
conditions, and its individual components require ablation before one
can separate the contribution of margin, saturation, and momentum
signals.

\subsection{Finite-time freezing guarantee}

The local theorem behind this interpretation is conditional but
finite-time. For a candidate frozen set $H$, active complement $Q$,
and frozen signs $v_i$, the appendix proves that if the bSB state
satisfies $x_i^{(k)} = v_i$ and $y_i^{(k)} = 0$ for $i \in H$, and if
\begin{equation}
(\alpha_k^2 - 1) + \mu_i v_i + \sum_{j \in H} s_{ij} v_i v_j
\;>\;
\sum_{j \in Q} |s_{ij}|
\qquad (i \in H),
\label{eq:certificate_ineq}
\end{equation}
then one further clipped bSB step leaves every coordinate in $H$ at
the same sign with zero momentum.

\begin{corollary}
\label{cor:persistent}
Assume the schedule $\{\alpha_k\}$ is non-decreasing and the same
worst-case box bound $|x_j| \le 1$ continues to hold for every
$j \in Q$ at all subsequent steps. If the certificate inequality
\eqref{eq:certificate_ineq} holds for all $i \in H$ at step $k$, then
$x_i^{(k')} = v_i$ and $y_i^{(k')} = 0$ for every $i \in H$ and every
$k' \ge k$. The proof follows by induction on the step index (see
Appendix~\ref{sec:app_a3}, Corollary~\ref{cor:persistent_app}).
\end{corollary}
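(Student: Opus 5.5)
The plan is an induction on the step index $k' \ge k$, using Theorem~\ref{thm:frozen}(ii) (equivalently Proposition~\ref{prop:one_step_app}) as the inductive step. The inductive hypothesis at step $k'$ has two parts. First, the saturated state $x_i^{(k')} = v_i$, $y_i^{(k')} = 0$ holds for all $i \in H$. Second, the certificate inequality \eqref{eq:certificate_ineq} holds with $\alpha_{k'}$ in place of $\alpha_k$. The base case $k'=k$ is exactly the hypothesis of the corollary. Given both parts at step $k'$, the one-step result yields the saturated state at step $k'+1$. So the work reduces to propagating the certificate from $k'$ to $k'+1$.

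To propagate the certificate, I will note which quantities in $\rho_i(k';H) = D_i^H(k') - B_i^Q$ depend on the step. The envelope $B_i^Q = \sum_{j\in Q}|s_{ij}|$ is a fixed constant of the instance and the partition $(H,Q)$. In $D_i^H(k')$, the terms $\mu_i v_i$ and $\sum_{j\in H} s_{ij}v_iv_j$ depend only on the frozen signs, and these do not change: the first part of the inductive hypothesis keeps every $x_i$, $i\in H$, at $v_i$. The only step-dependent term is $\alpha_{k'}^2 - 1$. Since $\{\alpha_k\}$ is non-decreasing and $\alpha_k \ge 0$, we have $\alpha_{k'+1}^2 \ge \alpha_{k'}^2$. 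Hence $\rho_i(k'+1;H) \ge \rho_i(k';H) > 0$, which closes the induction.

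The subtle point, and the main obstacle, is making sure the one-step argument is actually applicable at every later step. The proof of Theorem~\ref{thm:frozen}(ii) bounds the unresolved contribution $\sum_{j\in Q} s_{ij}x_j^{(k')}$ by $B_i^Q$ via $|x_j^{(k')}|\le 1$. This is exactly why the corollary assumes the worst-case box bound persists for $j\in Q$ at all later steps; I will invoke that assumption explicitly at each step, rather than deduce it, since the active coordinates keep evolving.

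I will also check that the one-step argument uses only the current state and $\alpha_{k'}$, not earlier history. Concretely, the momentum update for $i\in H$ starts from $y_i=0$ and adds an outward increment of sign $v_i$ and size at least $\rho_i>0$ (times the step size). The position update then pushes $x_i$ past the wall, and clipping resets it to $v_i$ with zero momentum. This is Markovian in $(\mathbf{x}^{(k')},\mathbf{y}^{(k')},\alpha_{k'})$, so the induction is legitimate and gives the claim for all $k'\ge k$.
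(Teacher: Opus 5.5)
Your proposal is correct and follows essentially the same route as the paper: an induction on the step index, with Proposition~\ref{prop:one_step_app} driving each step, monotonicity of $\alpha_k$ (with $\alpha_k \ge 0$) keeping the left side of the certificate from decreasing, and the assumed box bound on $Q$ keeping the envelope $\sum_{j\in Q}|s_{ij}|$ valid. You carry the saturated state for all of $H$ jointly in the inductive hypothesis, which is the right way to handle the fact that each coordinate's certificate relies on the other frozen coordinates staying at $v_j$; the paper's ``apply the single-coordinate corollary coordinatewise'' step glosses over this.
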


\noindent This is the part
of the theory that directly matches the reduction logic: after the
trajectory has entered a robust saturated regime, the frozen
coordinates no longer need to be iterated.

The bSB hard-box theorem should also be read as a sufficient
variational statement, not as a hidden assumption behind the G-set
benchmark. Its positive-semidefinite condition can be restrictive for
raw benchmark matrices: for example, in the G1 instance with the sign
convention used here, $\lambda_{\min}(S) \approx -48.79$, so the
condition $(\alpha^2 - 1)I + S \succeq 0$ would require
$\alpha \gtrsim 7.06$, whereas the benchmark schedule uses
$\alpha \leq 1$. The empirical question is therefore not whether the
global theorem proves the tested trajectories, but whether the
finite-time certificate activates on those trajectories and whether
certified freezing preserves the same-seed baseline. In this sense,
the global variational theory identifies the landscape mechanism, the
frozen-set identity gives the exact reduced Ising subsystem, and the
robust saturated-margin condition supplies the algorithmic bridge.

\section{Methods}
\label{sec:methods}

\subsection{iSTAR algorithm and online certificate}

iSTAR is an algorithmic realization of the frozen-set reduction
principle. It does not treat late-stage stabilization as a heuristic
stopping signal alone. Instead, it uses stabilization to define an
induced Ising subsystem: the fixed coordinates are removed, their
couplings are folded into an effective field, and the remaining
dynamics is run only on the unresolved coordinates. We instantiate
this principle primarily for bSB, where saturation makes the
hard/active split directly observable, and then show that the same
mechanism also appears in aSB and dSB experiments.

\subsubsection{Reduction rule.}

For the primary implementation, iSTAR is online. Along the bSB
trajectory with horizon $K$, the algorithm performs certificate checks
at scheduled intervals. At a check time $k$, it tests the robust
freezing condition for saturated coordinates. Newly certified
coordinates are added to the frozen set $H$, their signs are stored in
$\mathbf{v}_H$, and the active set becomes $Q = H^c$. Their
interaction with the unresolved variables is represented exactly by
the induced field
\begin{equation}
\mu_{\mathrm{eff}} = \mu_Q + S_{QH} \mathbf{v}_H .
\label{eq:induced_field}
\end{equation}
The certified outward drift $D_i^H(k)$, unresolved coupling envelope
$B_i^Q$, and robust freezing margin $\rho_i(k;H) = D_i^H(k) - B_i^Q$
are defined in Section~\ref{sec:theory}
(Eqs.~\eqref{eq:drift_envelope}--\eqref{eq:robust_margin}).
A saturated coordinate with $x_i^{(k)} = v_i \in \{-1,1\}$ and
$y_i^{(k)} = 0$ is certified frozen whenever $\rho_i(k; H) > 0$. The
subsequent dynamics is then continued only on the reduced subsystem
$(S_{QQ}, \mu_{\mathrm{eff}})$, initialized from the current state on
$Q$. If additional coordinates become certified later, the same
reduction step is applied again. The final spin vector is obtained by
combining the frozen signs on $H$ with the terminal signs computed on
the remaining active coordinates.

\begin{table}[h]
\centering
\caption{Online certified iSTAR reduction rule.}
\label{tab:istar_rule}
\small
\begin{tabular}{@{}p{0.18\textwidth}p{0.78\textwidth}@{}}
\toprule
Input &
Interaction matrix $S$, field $\mu$, bSB schedule,
certificate-check interval, and robust freezing thresholds. \\
\midrule
Certificate check &
At each check time $k$, identify saturated coordinates with
$x_i^{(k)} = v_i \in \{-1,1\}$ and $y_i^{(k)} = 0$, and certify those
with $\rho_i(k; H) > 0$. \\
Frozen set update &
Add every newly certified coordinate to $H$, store its sign in
$\mathbf{v}_H$, and set $Q = H^c$. Coordinates not yet certified
remain active. \\
Induced field &
Replace the current active problem by the reduced Ising subsystem
$(S_{QQ}, \mu_{\mathrm{eff}})$ with
$\mu_{\mathrm{eff}} = \mu_Q + S_{QH} \mathbf{v}_H$. \\
Reduced continuation &
Continue the same SB update rule only on $Q$, initialized from the
current state $(\mathbf{x}^{(k)}_Q, \mathbf{y}^{(k)}_Q)$ and driven by
the induced subsystem until the next check or the terminal time. \\
Output &
Return the full spin vector obtained by combining the frozen signs
$\mathbf{v}_H$ with the terminal reduced-tail signs on $Q$. \\
\bottomrule
\end{tabular}
\end{table}

\subsubsection{Active-set selector for diagnostic probe sweeps.}

In the fixed-grid diagnostic sweep reported later, we also use a
practical active-set selector to test how broadly late-tail reduction
is available when a reduction time is chosen from a preassigned grid.
That selector retains coordinates whose readout remains fragile. For a
sign vector $\mathbf{v} = \operatorname{sgn}(\mathbf{x})$, it computes
the one-spin stability gap $\Delta_i(\mathbf{v})$ defined in
Section~\ref{sec:theory} (Eq.~\eqref{eq:oneflipgap}). Small values of $\Delta_i$ indicate
coordinates whose signs remain locally sensitive. This gap is defined
on the Ising energy itself, and therefore the same stability score can
be used for aSB, bSB, and dSB trajectories after sign readout. In the
diagnostic probe-sweep implementation, $Q$ is formed from three
signals: small one-spin margin, incomplete saturation, and large
residual momentum. Concretely, the lowest $15\%$ of one-spin margins
are admitted as initial candidates, coordinates with $|x_i| < 0.98$
are always retained, and the velocity criterion uses the upper $15\%$
quantile of $|y_i|$. The active set is capped at
$\max\{32, \lfloor 0.15 n \rceil\}$ coordinates by keeping the
smallest-margin coordinates.

This diagnostic rule is deliberately conservative but not certified.
It does not assume that the frozen set has been verified by the full
robust inequalities proved in the appendix. Instead, it uses
computable late-stage signals that approximate those conditions along
a trajectory. Its role in this paper is therefore secondary: it
measures the breadth of reducible tails, while the online certified
experiment is the main algorithmic test of the theorem.

\subsection{Solver models}

All experiments use the Ising formulation of Eq.~\eqref{eq:ising}. The
Goto et al.~\href{https://www.science.org/doi/suppl/10.1126/sciadv.abe7953}{Table~S2} G-set instances~\cite{ref22} are loaded with the
sign convention $S = -G$ and the external field is generated with
$\eta_\lambda = 0.2$. The aSB, bSB, and dSB models follow
Eq.~\eqref{eq:pote}, Eq.~\eqref{eq:hardbox}, and the published Tabu-SB
code without the tabu mechanism, respectively. The \href{https://www.science.org/doi/suppl/10.1126/sciadv.abe7953}{Table~S2} protocol
uses the per-instance step counts from Goto et
al.~\cite{ref22}; the all-Gset protocol uses a fixed 1200-step bSB
horizon over the 71 covered G-set graphs, with 20 seeds per graph for
the all-Gset study and 10 seeds for the \href{https://www.science.org/doi/suppl/10.1126/sciadv.abe7953}{Table~S2} study.

\subsection{Benchmark protocol}

We evaluate iSTAR in two complementary ways. The primary certified
experiment is online: along a bSB trajectory, every 50 steps we test
the robust freezing condition for saturated coordinates. Coordinates
that satisfy the certificate are immediately removed from the active
dynamics, their signs are folded into the induced field on the
remaining variables, and the reduced subsystem continues to evolve.
Thus this experiment does not choose a probe point using the full
baseline; it freezes coordinates only when the sufficient condition
has been verified along the trajectory.

We also report a fixed-grid probe sweep as a broader existence test on
the covered all-Gset instances. In that experiment, for each instance
and seed the reference run is the full 1200-step bSB trajectory, and
for each probe index $K_p$ we run the reduced tail from the probe
state and compare the resulting Ising energy with the same-seed
baseline. We report $\Delta E = E_{\mathrm{red}} - E_{\mathrm{base}}$;
since the Ising objective is minimized, a non-degrading reduced run
has $\Delta E \leq 0$.

Compute is measured by the dense matrix-vector work proxy
\begin{equation}
\mathrm{FLOPs}_{\mathrm{base}} = 2 n^2 K,
\qquad
\mathrm{FLOPs}_{\mathrm{red}} = 2 n^2 K_p + 2 |Q|^2 (K - K_p).
\label{eq:flops}
\end{equation}
For progressive online freezing, the same proxy is accumulated over
stages as $2 |Q_k|^2$ per step, where $Q_k$ is the current active set.
This proxy counts the dominant dense interaction work and is
independent of a particular implementation or hardware backend. It
should not be read as a wall-clock measurement; direct runtime
benchmarking is left for a hardware-specific evaluation. The proxy
also does not include the overhead of active-set selection, submatrix
extraction, memory allocation, or batching effects. For the probe
sweep, each graph is summarized by the probe point with the largest
dense-work saving among the non-degrading probes. Complete
per-instance records are provided in the appendix.

\subsection{Classical-solver baseline and data availability}

To address the natural question of how the frozen-set reduction
interacts with other classical Ising/MaxCut solvers, we additionally
compare three representative G-set instances (G1, G22, G43) at the
paper's standard field strength $\eta_\lambda = 0.2$ across six
configurations: full bSB and its iSTAR-reduced form (the primary
solver); full SimCIM and its iSTAR-reduced form (to test whether the
reduction idea transfers to a different continuous Ising dynamics);
and full Tabu-SB discrete SB together with iSTAR-reduced dSB (the same
frozen-set reduction applied to the published Tabu-SB code without its
tabu mechanism). Each configuration is run over five seeds. Cuts are
computed on the modified Ising objective with field, so the absolute
values exceed the classical zero-field best-known cuts of those
instances; the relevant comparison is across methods at the same field
strength. FLOPs are reported per trajectory, normalized to full bSB at
the same instance.

The G-set benchmark instances are publicly available from the Stanford
G-set collection~\cite{ref30}. The scripts used to run the iSTAR
reduction benchmarks, generate the figures, and reproduce the tables
will be made available in a public repository upon publication.

\section{Results}
\label{sec:results}

\subsection{Online certified reduction on the G-set benchmark}

The online experiment is designed to close the gap between the global
variational statements and finite-time computation. The large-$\alpha$
and hard-box results are sufficient landscape theorems and are not
used as assumptions in the benchmark. Instead, the online
implementation evaluates the finite-time robust freezing certificate
directly on the evolving bSB trajectory. Thus the experiment does not
infer stability from the positive-semidefinite condition or from a
retrospectively selected probe point: a coordinate is removed only
after the certificate has been verified for the current state.

We first test the progressive certified version on the Goto et
al.~\href{https://www.science.org/doi/suppl/10.1126/sciadv.abe7953}{Table~S2} G-set protocol~\cite{ref22}, using the per-instance step
counts from that benchmark and ten seeds per graph. We run both the
zero-field Max-Cut setting and a nonzero-field setting generated with
$\eta_\lambda = 0.2$ and field seed $20260422$. Each test covers
G1--G54, for a total of $540$ seed--instance runs. In both settings,
every online reduced run matches the same-seed full bSB baseline
energy (Table~\ref{tab:online}). We then run the same online
certified rule on the broader 71-graph all-Gset evaluation with 20
seeds per graph and a fixed 1200-step bSB horizon.

\begin{table}[h]
\centering
\caption{Online progressive certified iSTAR on the Goto et al.~\href{https://www.science.org/doi/suppl/10.1126/sciadv.abe7953}{Table~S2}
G1--G54 protocol~\cite{ref22}. Coordinates are frozen as soon as they
satisfy the robust freezing certificate; no baseline information is
used to choose the freezing time.}
\label{tab:online}
\small
\begin{tabular}{lcc}
\toprule
Statistic & $\mu = 0$ & $\eta_\lambda = 0.2$ \\
\midrule
Instances & G1--G54 & G1--G54 \\
Seeds per graph & 10 & 10 \\
Certificate check interval & 50 steps & 50 steps \\
Triggered runs & 230/540 & 540/540 \\
Baseline-degrading runs & 0/540 & 0/540 \\
Mean first freeze step & 1313.91 & 255.09 \\
Mean number of freeze events & 1.97 & 36.26 \\
Mean final frozen ratio & 96.73\% & 99.53\% \\
Mean final active ratio & 3.27\% & 0.47\% \\
Mean dense-work saving & 11.47\% & 56.32\% \\
Online better/tie/worse than first-trigger readout & 73/467/0 & 487/38/15 \\
Online better/tie/worse than full baseline & 0/540/0 & 0/540/0 \\
\bottomrule
\end{tabular}
\end{table}

The broader all-Gset online experiment confirms that the certified
rule is not limited to the G1--G54 \href{https://www.science.org/doi/suppl/10.1126/sciadv.abe7953}{Table~S2} subset. With
$\eta_\lambda = 0.2$, all 1420 seed--instance runs trigger certified
freezing and none degrades relative to the same-seed full bSB
baseline. The mean first freeze step is 176.90, the mean final active
ratio is 0.56\%, and the mean dense-work saving is 64.44\%
(Fig.~\ref{fig:overview}).

\begin{figure}[!htbp]
\centering
\includegraphics[width=0.82\textwidth]{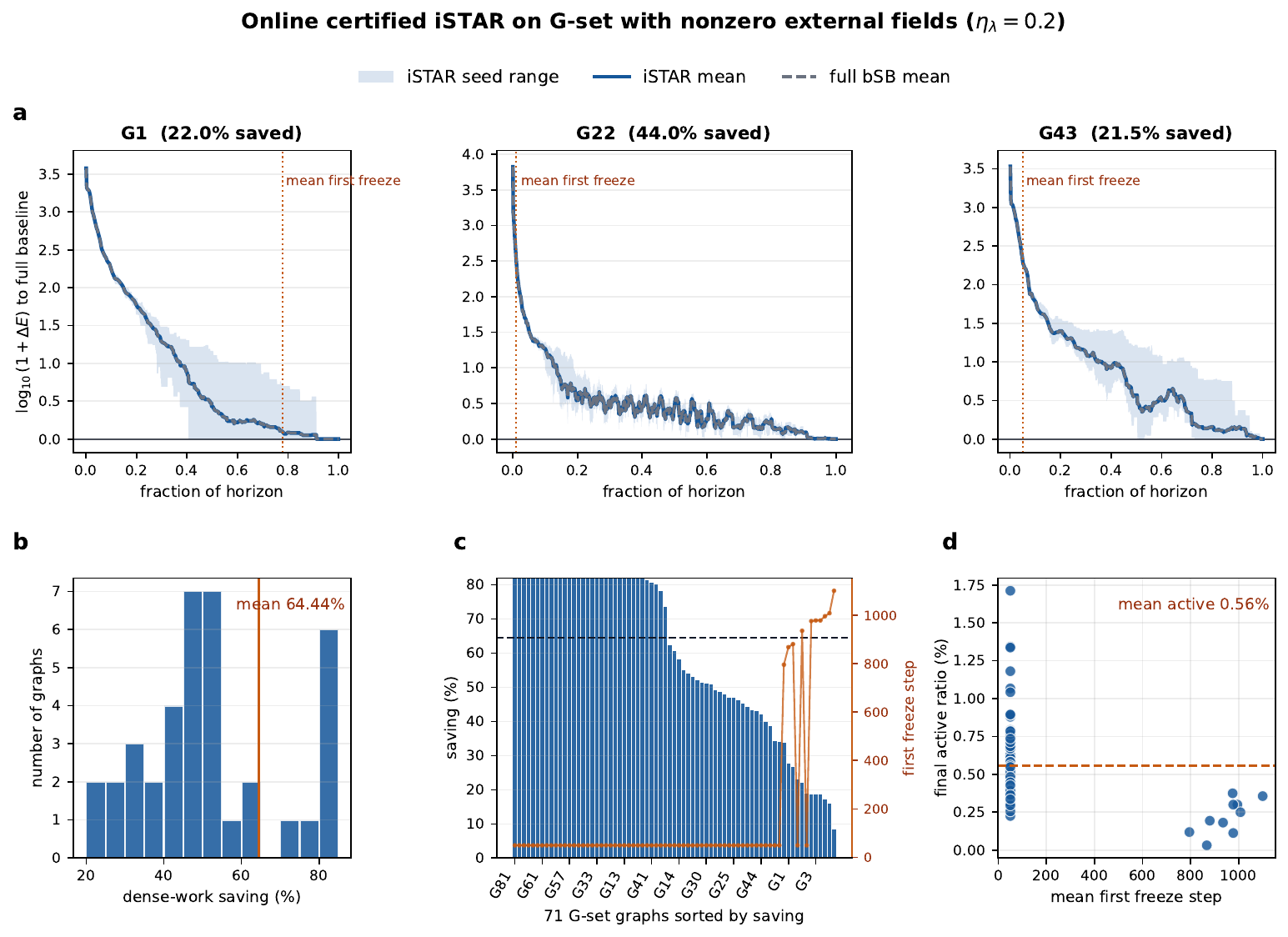}
\caption{Online certified iSTAR with nonzero external fields
($\eta_\lambda = 0.2$). (a) Representative energy gaps
$\log_{10}(1 + \Delta E)$ for G1, G22, and G43 relative to the
same-seed full bSB baseline. (b) Graph-level dense-work savings over
the 71 covered G-set instances. (c) Sorted instance-wise savings with
the mean first freeze step. (d) Mean first freeze step versus final
active ratio.}
\label{fig:overview}
\end{figure}

\begin{figure}[!htbp]
\centering
\includegraphics[width=0.84\textwidth]{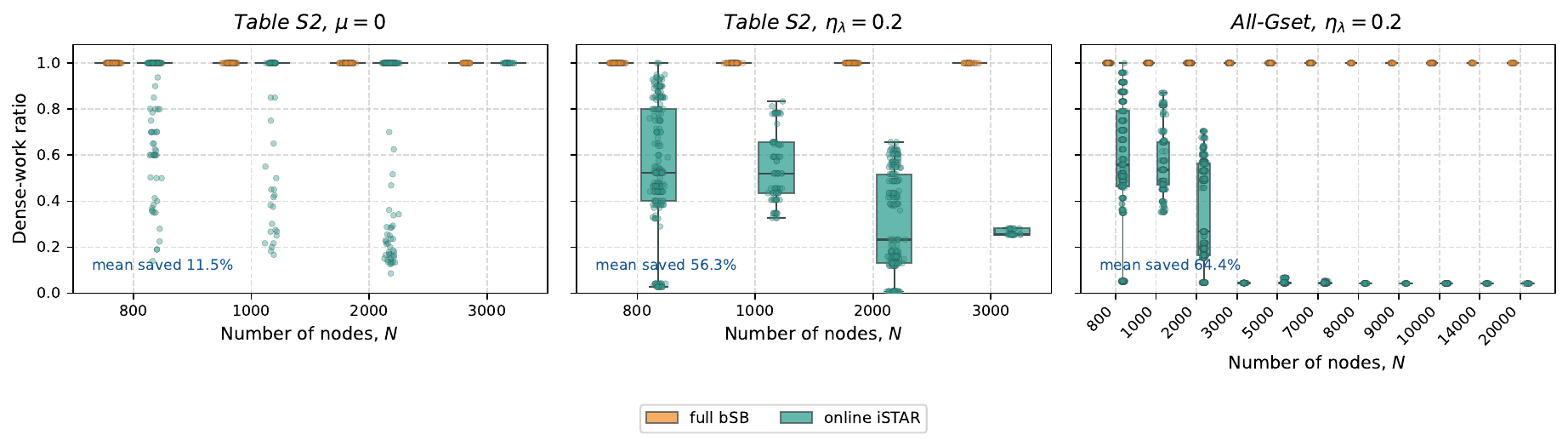}
\caption{Scale-grouped compute reduction for online certified iSTAR.
Each box contains seed--instance runs with the same number of nodes
$N$. Orange denotes the full bSB baseline; teal denotes online iSTAR.
The panels report zero-field \href{https://www.science.org/doi/suppl/10.1126/sciadv.abe7953}{Table~S2}, nonzero-field \href{https://www.science.org/doi/suppl/10.1126/sciadv.abe7953}{Table~S2}, and
nonzero-field all-Gset runs.}
\label{fig:boxplots}
\end{figure}

The online result is the closest experimental counterpart to the
finite-time freezing theorem: the algorithm observes the trajectory,
certifies coordinates using the robust condition, freezes only
certified coordinates, and continues the induced active subsystem. In
both settings, the progressive reduced trajectory returns to the
full-baseline energy in every run. Thus the certified tail is not
merely early stopping, nor is it selected by comparing against the
full run after the fact; it is a reduced continuation triggered by a
verified finite-time stability condition. The scaling picture across
the three settings is summarized by the scale-grouped boxplots in
Fig.~\ref{fig:boxplots}.

\subsection{Probe-sweep reduction on G-set}

The fixed-grid probe sweep covers 71 G-set graphs in the present
evaluation. Its purpose is to test how broadly late-tail reduction is
available over a larger instance set, even when the freezing time is
selected retrospectively from a probe grid. All 71 covered graphs have
at least one non-degrading probe point. The average best non-degrading
dense-work saving over these instances is 53.81\%, with median saving
57.02\%. The selected probe is also highly concentrated at the
beginning of the probe window: 58 graphs reach their best
non-degrading point at step 500, while the remaining 13 graphs require
a later probe between steps 600 and 800.

\begin{table}[h]
\centering
\caption{Summary of the all-Gset iSTAR probe-sweep benchmark. A
non-degrading point means that the mean reduced energy is no worse
than the same-seed 1200-step bSB baseline; the selected probe is the
best non-degrading point on the probed grid.}
\label{tab:probe}
\small
\begin{tabular}{lc}
\toprule
Statistic & Value \\
\midrule
Covered G-set graphs & 71 \\
Seeds per graph & 20 \\
Probe steps & 500, 600, 700, 800, 900, 1000, 1100 \\
Graphs with a non-degrading probe & 71 \\
Average best non-degrading dense-work saving & 53.81\% \\
Median best non-degrading saving & 57.02\% \\
Best probe at step 500 & 58 graphs \\
Best probe at steps 600--800 & 13 graphs \\
Graphs without a non-degrading probe & 0 \\
\bottomrule
\end{tabular}
\end{table}

The main empirical point of the probe sweep is the breadth of the
saving over the probed grid. This is not an isolated best-case effect:
almost every covered graph admits a probe at which more than half of
the dense-tail computation can be removed without lowering mean
solution quality relative to the full baseline. The concentration at
step 500 is equally important. It suggests that, in these runs, branch
stabilization has often occurred well before the nominal 1200-step
endpoint, and the remaining trajectory is dominated by late-stage
refinement rather than global branch selection. This interpretation
remains conditional on the probe sweep, whereas the online certified
experiment above provides the corresponding baseline-free freezing
test on the \href{https://www.science.org/doi/suppl/10.1126/sciadv.abe7953}{Table~S2} protocol. The difference between the mean saving
and the median saving comes from a small group of harder instances,
including G2, G15, G22, G29, and G47, for which the earliest
non-degrading probe occurs at step 800 and the selected saving is
therefore 32.58\%.

Aggregated over all probes, the mean FLOPs ratio is 0.6742 and the
mean active-set ratio is 0.15. At the selected operating points, the
reduced trajectory gives worse energy than the same-seed baseline in
only 1 of 1420 seed--instance pairs, with mean energy change
$-0.0035$. This framing is more relevant than a conventional
win/tie/loss score because the design goal is not to improve the
baseline objective, but to preserve it while removing a large fraction
of the tail computation.

\paragraph{Zero-field contrast.}
The zero-field case presents a contrasting picture. Applying the same
diagnostic active-set selector at the same probe step~500 to the
all-Gset instances with $\mu = 0$ yields 1420 seed--instance runs with
13 wins, 241 ties, and 1166 losses relative to the same-seed full bSB
baseline, a mean $\Delta E = +23.79$, and a mean FLOPs ratio of 0.43
(Appendix Table~\ref{tab:istar_probe_mu0}). At probe step~500 the
nonzero-field runs show only 48 degradations and a mean
$\Delta E = +0.14$ at the same FLOPs ratio
(Table~\ref{tab:istar_probe_summary}). The mean active-set size at the
probe is 460, substantially larger than the $\approx 0.15 n$ cap used
in the nonzero-field setting, reflecting the greater residual
fragility of the probe-time readout. In the absence of a
symmetry-breaking external field, many coordinates that appear
near-saturated at the probe can still undergo sign reversals in the
unperturbed full trajectory, and fixing such coordinates prematurely
biases the induced effective field on the active subsystem. This
contrast reinforces the role of the external field in creating a
favorable reduction regime: the same diagnostic reduction at the same
probe grid point removes 57\% of the dense interaction work and
preserves quality in 1372 of 1420 runs when an external field is
present, but preserves quality in only 254 of 1420 runs when the field
is absent. The pattern is consistent with the asymmetric activation
seen in the online certified experiment (Table~\ref{tab:online}),
where the robust certificate triggers universally with an external
field (540/540 runs) but selectively in the zero-field case
(230/540).

\subsection{Cross-variant evidence and external-field early-stopping control}

The same frozen-set mechanism also appears beyond the primary bSB
setting. In a separate G1--G54 evaluation using the step counts from
the Goto et al.~benchmark protocol~\cite{ref22}, we applied the same
active-set reduction idea to aSB, bSB, and dSB trajectories.
Table~\ref{tab:sb_family} summarizes the resulting non-degrading
operating points; the complete instance-wise table is reported in the
appendix. This protocol is intentionally different from the main bSB
experiment: it uses the per-instance step counts from the Goto et
al.~\cite{ref22} \href{https://www.science.org/doi/suppl/10.1126/sciadv.abe7953}{Table~S2} setting and probe ratios of that horizon,
rather than the fixed 1200-step schedule used in the all-Gset bSB
study. This explains why the bSB coverage reported here is 52/54,
while the main fixed-horizon bSB experiment has coverage 71/71.

\begin{table}[h]
\centering
\caption{Cross-variant evidence for frozen-set reduction on G1--G54
under the Goto et al.~\cite{ref22} \href{https://www.science.org/doi/suppl/10.1126/sciadv.abe7953}{Table~S2} protocol. Coverage counts
instances with at least one non-degrading reduced operating point.}
\label{tab:sb_family}
\small
\begin{tabular}{lccc}
\toprule
Variant & Coverage & Average saving & Probe ratios \\
\midrule
aSB & 54/54 & 47.97\% & 0.5, 0.6, 0.7, 0.8, 0.9 \\
bSB & 52/54 & 36.84\% & 0.5, 0.6, 0.7, 0.8, 0.9 \\
dSB & 40/54 & 18.82\% & 0.5, 0.6, 0.7, 0.8, 0.9 \\
\bottomrule
\end{tabular}
\end{table}

These supporting experiments are smaller than the main all-Gset bSB
study and are not meant to replace it. They show that the same
reduction operation---freeze stable signs, induce an external field on
the active coordinates, and continue the tail---can be instantiated
across the standard SB variants, with the strength of the effect
depending on the underlying dynamics and schedule. The weaker dSB
numbers are therefore informative rather than contradictory: the
discrete update and its schedule leave fewer instances with a large,
safely removable tail under this particular reduction rule.

\paragraph{External-field early-stopping control.}
A separate diagnostic control tests whether a reduced tail can add
information beyond simply reading out a fixed-grid reduction state.
This comparison is not part of the original Goto et
al.~\cite{ref22} baseline protocol and is not the online certified
algorithm; it is an internal control for the fixed-grid reduction
study. For each grid point we compare $E_{\mathrm{probe}}$,
$E_{\mathrm{red}}$, and $E_{\mathrm{base}}$, where
$E_{\mathrm{probe}}$ is the Ising energy obtained by sign readout at
the probe, $E_{\mathrm{red}}$ is the energy after the induced reduced
tail, and $E_{\mathrm{base}}$ is the energy of the full same-seed
trajectory for the same SB variant.

We ran this control on three representative G-set instances with
nonzero external field, using the Goto et al.~\cite{ref22} \href{https://www.science.org/doi/suppl/10.1126/sciadv.abe7953}{Table~S2}
step counts, ten seeds per instance, probe ratios
$0.5, 0.6, 0.7, 0.8, 0.9$, and field scale $\eta_\lambda = 0.2$ with
field seed $20260422$. We applied the same control to aSB, bSB, and
dSB. For each instance and variant, we selected the largest-saving
probe whose mean reduced energy was non-degrading relative to the
same-seed full baseline.

The control separates iSTAR from pure early stopping. Across the nine
variant--instance pairs, the selected reduced tail is non-degrading in
all 90 seed-level runs, improves over probe readout in 86 of them, and
removes an average of 42.36\% of the dense interaction work.
Table~\ref{tab:early_stop} gives the variant-level summary.

\begin{table}[h]
\centering
\caption{External-field early-stopping control across SB variants.
Energies are reported relative to the same-seed full baseline for the
corresponding variant. Values are averaged over the selected
non-degrading operating point for G1, G22, and G43, with ten seeds per
instance.}
\label{tab:early_stop}
\small
\begin{tabular}{lccccc}
\toprule
Variant & Saving &
$E_{\mathrm{probe}}-E_{\mathrm{base}}$ &
$E_{\mathrm{red}}-E_{\mathrm{base}}$ &
$E_{\mathrm{red}}-E_{\mathrm{probe}}$ &
$E_{\mathrm{red}}<E_{\mathrm{probe}}$ \\
\midrule
aSB & 48.88\% & $+138.9124$ & $-1018.2089$ & $-1157.1213$ & 30/30 \\
bSB & 45.62\% & $+2.7180$ & $0.0000$ & $-2.7180$ & 28/30 \\
dSB & 32.58\% & $+1.7952$ & $-0.0134$ & $-1.8086$ & 28/30 \\
\midrule
Mean & 42.36\% & $+47.8085$ & $-339.4074$ & $-387.2160$ & 86/90 \\
\bottomrule
\end{tabular}
\end{table}

\subsection{Classical-solver baseline comparison}

The primary bSB evidence above baselines each reduced run against its
same-seed full bSB trajectory. To address the natural question of how
the frozen-set reduction interacts with other classical Ising/MaxCut
solvers, we additionally compare three representative G-set instances
(G1, G22, G43) at the paper's standard field strength
$\eta_\lambda = 0.2$ across six configurations as described in
Section~\ref{sec:methods}. Table~\ref{tab:classical} reports the
per-instance cut values and FLOPs ratios.

\begin{table}[h]
\centering
\caption{Classical-solver baseline comparison at
$\eta_\lambda = 0.2$ on G1, G22, G43 with 5 seeds per cell. Best and
mean cut are reported on the modified Ising objective. FLOPs ratio is
per trajectory, normalized to full bSB on the same instance.}
\label{tab:classical}
\small
\begin{tabular}{lccc}
\toprule
\multicolumn{4}{c}{\emph{G1 ($n = 800$, $m = 19\,176$)}} \\
Method & Best cut & Mean cut & FLOPs ratio \\
\midrule
Full bSB & 15\,154 & 15\,154 & 1.000 \\
iSTAR-reduced bSB & 15\,154 & 15\,154 & 0.430 \\
Full SimCIM & 14\,808 & 14\,754 & 0.064 \\
iSTAR-reduced SimCIM & 15\,045 & 15\,028 & 0.040 \\
Full dSB (Tabu-SB) & 13\,166 & 13\,135 & 6.225 \\
iSTAR-reduced dSB & 13\,160 & 13\,143 & 4.151 \\
\midrule
\multicolumn{4}{c}{\emph{G22 ($n = 2000$, $m = 19\,990$)}} \\
Method & Best cut & Mean cut & FLOPs ratio \\
\midrule
Full bSB & 14\,142 & 14\,142 & 1.000 \\
iSTAR-reduced bSB & 14\,142 & 14\,141 & 0.444 \\
Full SimCIM & 13\,604 & 13\,492 & 0.064 \\
iSTAR-reduced SimCIM & 14\,544 & 14\,466 & 0.030 \\
Full dSB (Tabu-SB) & 11\,960 & 11\,917 & 7.212 \\
iSTAR-reduced dSB & 11\,960 & 11\,925 & 4.713 \\
\midrule
\multicolumn{4}{c}{\emph{G43 ($n = 1000$, $m = 9\,992$)}} \\
Method & Best cut & Mean cut & FLOPs ratio \\
\midrule
Full bSB & 6\,802 & 6\,802 & 1.000 \\
iSTAR-reduced bSB & 6\,802 & 6\,802 & 0.428 \\
Full SimCIM & 6\,511 & 6\,480 & 0.064 \\
iSTAR-reduced SimCIM & 6\,736 & 6\,710 & 0.033 \\
Full dSB (Tabu-SB) & 5\,937 & 5\,910 & 6.387 \\
iSTAR-reduced dSB & 5\,937 & 5\,910 & 4.124 \\
\bottomrule
\end{tabular}
\end{table}

Three observations follow. First, on G1 and G43 the iSTAR-reduced bSB
matches full bSB exactly across all five seeds (best cut identical,
mean cut equal), with a 57\% FLOPs saving; on G22 the reduced run is
at most a single integer below full bSB, attributable to
floating-point noise from the soft-coordinate restart. Second, the
reduction transfers to SimCIM: iSTAR-reduced SimCIM consistently
outperforms full SimCIM (e.g.\ $+237$ on G1, $+940$ on G22),
suggesting that the frozen-set mechanism is not specific to one
dynamics. Third, dSB is consistently weaker than bSB on these
instances under the same nonzero-field setting, consistent with its
published behavior; iSTAR-reduced dSB recovers a modest FLOPs saving but does
not close the quality gap, so the frozen-set reduction amortizes but
does not substitute for the dynamics it is layered onto.

\section{Discussion}
\label{sec:discussion}

This paper develops frozen-set reduction as a variational mechanism
for continuous Ising solvers. The central observation is not only that
late-stage trajectories often contain stabilized coordinates, but that
such coordinates have an exact algebraic role: once their signs are
fixed, they induce an external field on the unresolved subsystem.
Thus a high-dimensional Ising instance can collapse, near the end of a
continuous trajectory, to a smaller tail problem without changing the
discrete objective on the retained coordinates. This perspective is
related in spirit to persistence and backbone ideas in spin systems,
variable-fixing methods in integer optimization, and screening rules
in continuous optimization: in each case one tries to identify
variables whose state is effectively settled and remove them from the
active problem. The distinction here is that the induced-field
identity gives an exact reduced Ising instance for the unresolved tail
of an SB trajectory.

The theory explains this mechanism at two levels. The global
variational results are sufficient landscape statements: the aSB
theorem gives large-parameter recovery for the quartic continuous
landscape with external field, and the bSB theorem gives a
hard-confinement recovery result under a spectral condition. These
results show how continuous minimizers can align with Ising minimizers,
but they are not guarantees for the finite G-set schedules used in the
experiments. The local theory is closer to the algorithm: the
frozen-set identity supplies the exact reduced Ising objective used by
iSTAR, and the robust-margin freezing criteria show that, once a
saturated set has entered the certified regime, the clipped bSB update
keeps that set fixed.

The experiments test these layers separately. The primary online bSB
experiment evaluates the robust freezing certificate along the actual
trajectory, records when it activates, measures the final frozen
ratio, and checks whether the certified induced tail preserves the
same-seed full baseline. On the nonzero-field G1--G54 protocol, the
certificate activates in all 540 seed--instance runs, freezes 99.53\%
of variables on average by the end of the run, and preserves the
full-baseline energy while removing 56.32\% of the dense interaction
work under the benchmark proxy. The all-Gset online experiment
confirms this pattern at scale: all 1420 runs trigger, none degrades,
and the mean saving is 64.44\%. The all-Gset probe sweep is a
complementary existence study: it shows that all 71 covered graphs
admit a non-degrading fixed-grid probe, with average best
non-degrading dense-work saving 53.81\%. This sweep is useful evidence
about the breadth of late-tail reducibility, but it is not the primary
automatic operating point. The supporting aSB and dSB results show
that related active-set behavior is visible beyond one solver variant.

There are important limitations. The variational theory does not yet
prove a complete finite-time branch-selection theorem for the
projected bSB Hamiltonian dynamics with momentum, clipping, and
boundary reset. Moreover, the sufficient spectral hypotheses in the
hard-box theorem are not satisfied by the raw G-set experiments at the
tested schedule parameters. The theory should therefore be read as a
mechanism and consistency result, not as a global guarantee for the
finite-time trajectories in Section~\ref{sec:results}. What is
available at the finite-time level is conditional: if a candidate
frozen set satisfies the robust saturated-margin inequalities, then
subsequent clipped bSB updates keep it frozen. The online certified
experiment uses this condition directly for its freezing decisions,
whereas the broader fixed-grid probe sweep uses computable surrogate
signals, such as saturation, velocity, and one-spin margins, and
chooses the reduction time from a grid after the comparison is made.
This retrospective selection is a drawback of the sweep and is why it
is reported as diagnostic evidence rather than as the main algorithm.
Those surrogate components require ablation before one can separate
their individual contribution. Finally, the present experiments
quantify dense interaction work rather than wall-clock time. This is
the right abstraction for isolating the reduction mechanism, but a
computational deployment should also measure realized runtime on
concrete CPU, GPU, or accelerator implementations, where memory
layout, batching, and sparse structure can change the observed runtime
gain.

The external-field early-stopping control in
Section~\ref{sec:results} addresses one immediate ambiguity: for the
tested representative instances, the reduced tail is not equivalent to
simply stopping at the fixed reduction time. Broader controls remain
important. A full-Gset early-stopping comparison, wall-clock
measurements, and an ablation of the active-set selector over the
margin quantile, saturation threshold, velocity filter, and active-set
cap would separate the frozen-set principle from
implementation-specific choices and clarify how robustly the observed
savings transfer across hardware and instance families.

The most natural next step is an active-set theory for finite-time
branch stabilization. Such a theory would characterize when a set of
coordinates enters a stable basin, remains frozen, and leaves a
reduced subsystem whose solution preserves the final Ising readout.
This would turn the variational reduction principle developed here
from a static and empirically effective mechanism into a dynamical
guarantee for adaptive continuous Ising solvers. Beyond this, the
induced-field identity is not specific to simulated bifurcation: any
continuous Ising solver with sign readout can in principle carry the
same reduction.


\section*{Acknowledgments}
The authors thank colleagues and collaborators for helpful discussions.
This work was partially
supported by the National Natural Science Foundation of China (Grant
No.\ 12101394).

\appendix
\section*{Appendix}

\makeatletter
\@addtoreset{theorem}{section}
\@addtoreset{equation}{section}
\makeatother
\setcounter{theorem}{0}
\setcounter{equation}{0}
\renewcommand{\thetheorem}{\thesection.\arabic{theorem}}
\renewcommand{\theequation}{\thesection.\arabic{equation}}

\newpage
\section{Detailed theoretical statements and proofs}

This appendix records the proof details behind the variational
statements used in the main text, together with the complete
per-instance benchmark records and cross-variant reduction summaries
referenced in the main text. The mechanism these proofs support is
an \emph{algebraic collapse} of the late-stage bSB trajectory onto a
lower-dimensional active subspace, whose saturated coordinates can
then be eliminated exactly by the frozen-set identity. We keep the
organization parallel to the manuscript: first the large-$\alpha$
aSB recovery theorem, then the bSB soft--hard reduction and
asymptotic readout, and finally the frozen-set identity and
finite-time freezing criteria used by iSTAR.

\subsection*{A.1 Large-$\alpha$ aSB recovery}\label{sec:app_a1}

The associated external-field aSB potential is
\[
U_{\mathrm{aSB}}(\mathbf{x}) =
\sum_{i=1}^{n} \tfrac{1}{4} x_i^4
+ \tfrac{\beta - \alpha^2}{2} \mathbf{x}^{\mathsf{T}} \mathbf{x}
- \tfrac{1}{2} \mathbf{x}^{\mathsf{T}} S \mathbf{x}
- \alpha \mu \cdot \mathbf{x},
\qquad \mathbf{x} \in \mathbb{R}^n,
\]
and the Ising problem is
$\min_{\mathbf{v} \in \{-1,1\}^n} E(\mathbf{v}) :=
-\tfrac{1}{2} \mathbf{v}^{\mathsf{T}} S \mathbf{v} - \mu \cdot \mathbf{v}$.

\noindent\textbf{Notation.}
Define the coupling scale $R_* :=
\max_{1 \le i \le n} \sum_{j=1}^{n} |s_{i,j}|$,
the field scale $M_* := \max_{1 \le i \le n} |\mu_i|$, and the
energy-gap scale
\[
\Delta_* :=
\begin{cases}
\min_{\mathbf{v}', \mathbf{v}'' \in \mathcal{C}}
|E(\mathbf{v}') - E(\mathbf{v}'')|, & \text{if } E \text{ is not constant on } \mathcal{C}, \\
1, & \text{if } E \text{ is constant on } \mathcal{C}.
\end{cases}
\]
Define
\[
C_* := n \left( \tfrac{5}{4} + \tfrac{3}{2}|\beta| + \tfrac{3}{2} R_* + M_* \right).
\]
The threshold scales used in the proof are
\[
\alpha_{\mathrm{ball}} := C_*,\qquad
\alpha_{\mathrm{gap}}  := \tfrac{2 C_*}{\Delta_*},\qquad
\alpha_{\mathrm{loc}}  := 5(2 R_* + 2|\beta| + M_* + 1),
\]
\[
\alpha_{\mathrm{bd}}   := \sqrt{R_* + |\beta| + \tfrac{M_*}{2}},\qquad
\alpha_{\mathrm{sep}}  := \sqrt{3 + |\beta| + R_*}.
\]
The overall threshold in Theorem~\ref{thm:asb_main} is
$\alpha_* := \max\{\alpha_{\mathrm{ball}}, \alpha_{\mathrm{gap}},
\alpha_{\mathrm{loc}}, \alpha_{\mathrm{bd}}, \alpha_{\mathrm{sep}}\}$.

\begin{lemma}\label{lem:local_well}
Fix $\mathbf{v} \in \mathcal{C}$. For every
$\mathbf{x} \in B(\alpha \mathbf{v}, 1)$,
\[
U_{\mathrm{aSB}}(\mathbf{x}) - U_{\mathrm{aSB}}(\alpha \mathbf{v})
\ge -C_* \alpha. \tag{S1}\label{eq:S1}
\]
Moreover, for every
$\mathbf{x} \in \partial B(\alpha \mathbf{v}, 1)$,
\[
U_{\mathrm{aSB}}(\mathbf{x}) - U_{\mathrm{aSB}}(\alpha \mathbf{v})
\ge \alpha^2 - C_* \alpha. \tag{S2}\label{eq:S2}
\]
Consequently, if $\alpha > \alpha_{\mathrm{ball}}$, then the
restriction of $U_{\mathrm{aSB}}$ to $\overline{B(\alpha \mathbf{v}, 1)}$
attains its minimum at some point
$\mathbf{x}_{\mathbf{v}}^* \in B(\alpha \mathbf{v}, 1)$, and
\[
|U_{\mathrm{aSB}}(\mathbf{x}_{\mathbf{v}}^*) - U_{\mathrm{aSB}}(\alpha \mathbf{v})|
\le C_* \alpha. \tag{S3}\label{eq:S3}
\]
\end{lemma}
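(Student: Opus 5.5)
The plan is to expand $U_{\mathrm{aSB}}$ exactly around the lifted spin state $\alpha\mathbf{v}$, in the same spirit as the algebraic identity of Lemma~\ref{lem:lifted}. Write $\mathbf{x} = \alpha\mathbf{v} + \mathbf{h}$ with $\|\mathbf{h}\| \le 1$, so that $|h_i| \le 1$ for every $i$. Per coordinate, using $v_i^2 = 1$, we have
\[
\tfrac14(\alpha v_i + h_i)^4 = \tfrac14\alpha^4 + \alpha^3 v_i h_i + \tfrac32\alpha^2 h_i^2 + \alpha v_i h_i^3 + \tfrac14 h_i^4 .
\]
The quadratic confinement term contributes
\[
\tfrac{\beta-\alpha^2}{2}\bigl(\alpha^2 + 2\alpha v_i h_i + h_i^2\bigr).
\]
The key cancellation is that the $\alpha^3 v_i h_i$ terms cancel, and the $\alpha^2 h_i^2$ terms combine to $(\tfrac32 - \tfrac12)\alpha^2 h_i^2 = \alpha^2 h_i^2$. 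The coupling and field terms split as
\[
-\tfrac12 \mathbf{x}^{\mathsf T} S\mathbf{x} - \alpha\mu\cdot\mathbf{x}
= \bigl[-\tfrac12\alpha^2 \mathbf{v}^{\mathsf T} S \mathbf{v} - \alpha^2 \mu\cdot\mathbf{v}\bigr]
- \alpha (S\mathbf{v})\cdot\mathbf{h} - \tfrac12 \mathbf{h}^{\mathsf T} S\mathbf{h} - \alpha\mu\cdot\mathbf{h},
\]
where the bracket is the center contribution. Subtracting $U_{\mathrm{aSB}}(\alpha\mathbf{v})$ then gives
\[
U_{\mathrm{aSB}}(\mathbf{x}) - U_{\mathrm{aSB}}(\alpha\mathbf{v})
= \alpha^2\|\mathbf{h}\|^2 + \sum_i\Bigl(\alpha v_i h_i^3 + \tfrac14 h_i^4 + \beta\alpha v_i h_i + \tfrac{\beta}{2}h_i^2\Bigr)
- \alpha(S\mathbf{v})\cdot\mathbf{h} - \tfrac12\mathbf{h}^{\mathsf T}S\mathbf{h} - \alpha\mu\cdot\mathbf{h}.
\]

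Next I would bound each remainder term from below using $|h_i|\le 1$.
\begin{itemize}
\item The quartic piece satisfies $\tfrac14 h_i^4 \ge 0$.
\item The cubic piece satisfies $|\alpha v_i h_i^3| \le \alpha$, summing to at most $n\alpha$.
\item The field-like $\beta$ piece satisfies $|\beta\alpha v_i h_i| \le |\beta|\alpha$, summing to at most $n|\beta|\alpha$.
\item The $\beta$ quadratic piece satisfies $\tfrac{\beta}{2}h_i^2 \ge -\tfrac{|\beta|}{2}$, summing to at least $-\tfrac n2|\beta|$.
\item The linear coupling satisfies $|\alpha(S\mathbf{v})\cdot\mathbf{h}| \le \alpha\sum_i |(S\mathbf{v})_i| \le n\alpha R_*$.
\item The quadratic coupling satisfies $\tfrac12|\mathbf{h}^{\mathsf T}S\mathbf{h}| \le \tfrac12 \sum_{i,j}|s_{ij}| \le \tfrac n2 R_*$.
\item The external field satisfies $|\alpha\mu\cdot\mathbf{h}| \le n\alpha M_*$.
\end{itemize}
The two $\alpha$-free terms are absorbed into $\alpha$-linear ones using $\alpha\ge 1$. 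This is implicit in the regime of interest, since $\alpha_*\ge 1$; if needed, the lemma is read for $\alpha \ge 1$. Collecting everything gives
\[
U_{\mathrm{aSB}}(\mathbf{x}) - U_{\mathrm{aSB}}(\alpha\mathbf{v}) \ge \alpha^2\|\mathbf{h}\|^2 - n\alpha\bigl(1 + \tfrac32|\beta| + \tfrac32 R_* + M_*\bigr) \ge \alpha^2\|\mathbf{h}\|^2 - C_*\alpha .
\]
The constant $C_*$ even has $\tfrac14 n$ of slack. Dropping $\alpha^2\|\mathbf{h}\|^2 \ge 0$ yields (S1). Setting $\|\mathbf{h}\| = 1$ yields (S2).

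For the consequence, $U_{\mathrm{aSB}}$ is continuous and the closed ball is compact, so a minimizer over $\overline{B(\alpha\mathbf{v},1)}$ exists. If $\alpha > \alpha_{\mathrm{ball}} = C_*$, then by (S2) every boundary value exceeds $U_{\mathrm{aSB}}(\alpha\mathbf{v}) + \alpha(\alpha - C_*) > U_{\mathrm{aSB}}(\alpha\mathbf{v})$. The center value is attained inside the ball, so no boundary point can be a minimizer, and $\mathbf{x}^*_{\mathbf{v}}$ lies in the open ball. Finally, (S3) follows by sandwiching:
\[
-C_*\alpha \le U_{\mathrm{aSB}}(\mathbf{x}^*_{\mathbf{v}}) - U_{\mathrm{aSB}}(\alpha\mathbf{v}) \le 0,
\]
where the left inequality is (S1) and the right inequality holds because the center is a competitor.

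The only delicate step is the expansion itself. One must verify the cancellation of the $\alpha^3$ terms and get the correct $\alpha^2$ coefficient, since a sign slip there would destroy the coercive $\alpha^2\|\mathbf{h}\|^2$ term. One must also keep the bookkeeping of the $\alpha$-free terms consistent with the stated constant $C_*$, which requires using $\alpha \ge 1$.
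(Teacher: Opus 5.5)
Your proposal is correct and follows the paper's proof step for step. The shared steps are:
\begin{itemize}
\item the exact expansion about $\alpha\mathbf{v}$, with the $\alpha^3$ cancellation leaving the coercive $\alpha^2\|\mathbf{h}\|^2$ term;
\item termwise bounds using $|h_i|\le 1$ and $\alpha\ge 1$, an assumption the paper also makes tacitly;
\item the compactness argument and the sandwich that gives (S3).
\end{itemize}
Two of your details are slightly more precise than the written proof: you drop $\tfrac14 h_i^4\ge 0$ instead of bounding it by $\tfrac n4\alpha$, and you correctly invoke (S2), where the paper cites (S1), to exclude minimizers on the boundary.
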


\noindent\textit{Proof.}
Write $\mathbf{x} = \alpha \mathbf{v} + \delta$ with
$|\delta_i| \le 1$ for every $i$. Direct expansion gives
\begin{align*}
U_{\mathrm{aSB}}(\mathbf{x}) - U_{\mathrm{aSB}}(\alpha \mathbf{v})
&=
\sum_{i=1}^{n} \left[ \alpha^2 \delta_i^2 + \beta (\alpha v_i) \delta_i
+ (\alpha v_i) \delta_i^3 + \tfrac{1}{4} \delta_i^4
+ \tfrac{\beta}{2} \delta_i^2 \right] \\
&\quad - \delta^{\mathsf{T}} S (\alpha \mathbf{v})
- \tfrac{1}{2} \delta^{\mathsf{T}} S \delta
- \alpha \mu \cdot \delta.
\end{align*}
Discard the nonnegative term
$\alpha^2 \sum_i \delta_i^2$. Since $|\delta_i| \le 1$ and
$\alpha > 1$,
\[
\sum_{i=1}^{n} |\beta (\alpha v_i) \delta_i| \le n |\beta| \alpha,
\qquad
\sum_{i=1}^{n} \left| (\alpha v_i) \delta_i^3 \right| \le n \alpha,
\]
\[
\sum_{i=1}^{n} \tfrac{1}{4} |\delta_i|^4 \le \tfrac{n}{4} \alpha,
\qquad
\sum_{i=1}^{n} \tfrac{|\beta|}{2} \delta_i^2 \le \tfrac{n}{2} |\beta| \alpha.
\]
For the coupling terms,
\begin{align*}
|\delta^{\mathsf{T}} S (\alpha \mathbf{v})|
&= \alpha \left| \sum_{i,j} \delta_i s_{i,j} v_j \right|
\le \alpha \sum_i |\delta_i| \sum_j |s_{i,j}| \le n R_* \alpha,
\\
\tfrac{1}{2} |\delta^{\mathsf{T}} S \delta| &\le \tfrac{n}{2} R_* \alpha,
\qquad
\alpha |\mu \cdot \delta| \le \alpha \sum_i |\mu_i| |\delta_i| \le n M_* \alpha.
\end{align*}
Therefore
\[
U_{\mathrm{aSB}}(\mathbf{x}) - U_{\mathrm{aSB}}(\alpha \mathbf{v})
\ge - n \left( \tfrac{5}{4} + \tfrac{3}{2}|\beta| + \tfrac{3}{2} R_* + M_* \right) \alpha
= - C_* \alpha.
\]
This proves \eqref{eq:S1}. If now $|\delta| = 1$, the positive term
$\alpha^2 \sum_i \delta_i^2 = \alpha^2$ appears, and \eqref{eq:S2} follows.
Since $\overline{B(\alpha \mathbf{v}, 1)}$ is compact,
$U_{\mathrm{aSB}}$ attains its minimum there at some point
$\mathbf{x}_{\mathbf{v}}^*$. By \eqref{eq:S1}, such a minimizer cannot lie on
the boundary once $\alpha > \alpha_{\mathrm{ball}}$. Hence
$\mathbf{x}_{\mathbf{v}}^* \in B(\alpha \mathbf{v}, 1)$.
Minimality gives $U_{\mathrm{aSB}}(\mathbf{x}_{\mathbf{v}}^*) \le
U_{\mathrm{aSB}}(\alpha \mathbf{v})$, while \eqref{eq:S1} gives
$U_{\mathrm{aSB}}(\mathbf{x}_{\mathbf{v}}^*) -
U_{\mathrm{aSB}}(\alpha \mathbf{v}) \ge -C_* \alpha$, yielding \eqref{eq:S3}. $\square$

\begin{lemma}\label{lem:localization}
Suppose $\mathbf{x}$ is a local minimum point of
$U_{\mathrm{aSB}}$. If
$\alpha > \max\{\alpha_{\mathrm{loc}}, \alpha_{\mathrm{bd}}, \alpha_{\mathrm{sep}}\}$,
then
\[
|x_i - \alpha \operatorname{sgn}(x_i)| < 1, \qquad i = 1, \dots, n.
\tag{S4}\label{eq:S4}
\]
\end{lemma}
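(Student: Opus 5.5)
The plan is to combine the first-order (critical-point) condition with the second-order (Hessian) condition at a local minimum $\mathbf{x}$. Since $U_{\mathrm{aSB}}$ is smooth, $\nabla U_{\mathrm{aSB}}(\mathbf{x})=0$ and $\nabla^2 U_{\mathrm{aSB}}(\mathbf{x})\succeq 0$. The gradient condition reads, coordinatewise,
\[
g(x_i) = r_i, \qquad g(t):=t^3-(\alpha^2-\beta)t, \qquad r_i := (S\mathbf{x})_i+\alpha\mu_i .
\]
The Hessian is $\operatorname{diag}(3x_i^2+\beta-\alpha^2)-S$. Since $s_{ii}=0$, testing it against $e_i$ gives
\[
3x_i^2 \ge \alpha^2-\beta .
\]
This lower bound is the tool that excludes the central branch near the origin. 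The remaining work is to show that, on the two outer branches, the scalar cubic equation forces $x_i$ to lie within distance $1$ of $\pm\alpha$.

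\textbf{Step 1: a priori bound on $\|\mathbf{x}\|_\infty$.} Let $m=\max_i|x_i|$, attained at some index $i$. The equation $g(x_i)=r_i$ together with $|r_i|\le R_* m+\alpha M_*$ gives
\[
m^3 \le (\alpha^2+|\beta|+R_*)\,m+\alpha M_* .
\]
If $m\ge\alpha$, dividing by $m$ yields $m^2\le \alpha^2+|\beta|+R_*+M_*$, so $m\le \alpha+(|\beta|+R_*+M_*)/(2\alpha)<\alpha+1$ once $\alpha$ is as large as the threshold requires. The threshold $\alpha_{\mathrm{bd}}$, possibly combined with $\alpha_{\mathrm{loc}}$, is meant to cover exactly this. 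In either case $m<\alpha+1$. This gives the uniform residual bound
\[
|r_i|\le R_*(\alpha+1)+\alpha M_* \quad\text{for all } i,
\]
which is $O(\alpha)$, whereas the cubic $g$ varies on the scale $\alpha^2$ near $\pm\alpha$.

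\textbf{Step 2: localize each coordinate.} By the odd symmetry of $g$, it suffices to treat $x_i>0$. The Hessian bound gives $x_i\ge t_0:=\sqrt{(\alpha^2-\beta)/3}$. On $[t_0,\infty)$ we have $g'(t)=3t^2-\alpha^2+\beta\ge 0$, so $g$ is nondecreasing there. By $\alpha_{\mathrm{sep}}$, $\alpha-1>t_0$, so the interval $[\alpha-1,\alpha+1]$ lies in this monotone region. I will then evaluate $g$ at the endpoints:
\[
g(\alpha+1)=(\alpha+1)(2\alpha+1+\beta), \qquad g(\alpha-1)=-(\alpha-1)(2\alpha-1-\beta).
\]
Both have magnitude of order $2\alpha^2$. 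Using $\alpha>\alpha_{\mathrm{loc}}=5(2R_*+2|\beta|+M_*+1)$, I expect to verify
\[
g(\alpha-1)<-|r_i| \qquad\text{and}\qquad g(\alpha+1)>|r_i|.
\]
Monotonicity of $g$ on $[t_0,\infty)$ then shows that any solution of $g(x_i)=r_i$ with $x_i\ge t_0$ lies in the open interval $(\alpha-1,\alpha+1)$. This is (S4) for $x_i>0$; the case $x_i<0$ follows by the symmetry $t\mapsto -t$. The case $x_i=0$ is impossible, since the Hessian bound gives $x_i^2\ge(\alpha^2-\beta)/3>0$.

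\textbf{Main obstacle.} The hard part is purely quantitative: checking that the specific constants in $\alpha_{\mathrm{loc}}$, $\alpha_{\mathrm{bd}}$ and $\alpha_{\mathrm{sep}}$ suffice for the three inequalities used above, namely $m<\alpha+1$, $\alpha-1>t_0$, and $(\alpha-1)(2\alpha-1-\beta)>R_*(\alpha+1)+\alpha M_*$. The last one is the tightest. I would first reduce it to the crude bound $(\alpha-1)(2\alpha-1-|\beta|)\ge \alpha^2$ for $\alpha\ge 5(|\beta|+1)$. Then I would use $\alpha^2\ge \alpha\cdot 5(2R_*+M_*)> R_*(\alpha+1)+\alpha M_*$. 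If a constant does not quite close, the fallback is to sharpen Step 1 using $m\le\alpha+1$ directly inside $|r_i|$, which loses only lower-order terms.
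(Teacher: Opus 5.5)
Your argument is correct, but it runs in a different order from the paper's.

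\textbf{The paper's route.} It first obtains the crude bound $|x_i|<2\alpha$ from the maximal coordinate, using $\alpha_{\mathrm{bd}}$. It then treats the critical-point equation as a perturbed cubic, $x_i(x_i-\alpha)(x_i+\alpha)=O(\alpha)$. A three-root proximity argument shows that each $x_i$ lies within $5B_*/\alpha<1$ of one of $-\alpha,0,\alpha$, using $\alpha_{\mathrm{loc}}$. Only at the end does it use the Hessian diagonal to discard the root near $0$, via $\alpha_{\mathrm{sep}}$.

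\textbf{Your route.} You use the Hessian diagonal $3x_i^2\ge\alpha^2-\beta$ first, which places every coordinate on an outer monotone branch of the scalar cubic $g$. Your a priori bound $\|\mathbf{x}\|_\infty<\alpha+1$ is sharper than the paper's $2\alpha$ and makes the residual $r_i$ of order $\alpha$. Monotone bracketing with the explicit values $g(\alpha\pm1)\approx\pm2\alpha^2$ then finishes.

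\textbf{Comparison.} Your version avoids the three-branch case analysis. It also makes plain that only $\alpha>\alpha_{\mathrm{loc}}$ is needed, since $\alpha_{\mathrm{loc}}\ge5$ already dominates $\alpha_{\mathrm{bd}}$ and $\alpha_{\mathrm{sep}}$. The paper's version yields, as a by-product, the rate $|x_i-\alpha\operatorname{sgn}(x_i)|<5B_*/\alpha$, which is stronger than (S4).

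\textbf{A small correction.} The inequality $\alpha-1>t_0$ does not follow from $\alpha>\alpha_{\mathrm{sep}}$ alone. For example, with $R_*=0$, $\beta\in(0,1)$ and $\alpha=2>\sqrt{3+\beta}$, one gets $t_0>\alpha-1$. The inequality does follow from $\alpha>\alpha_{\mathrm{loc}}\ge5$. Your bracketing does not actually need it anyway: if $t_0\ge\alpha-1$, the case $x_i\le\alpha-1$ simply cannot occur for $x_i\ge t_0$.
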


\noindent\textit{Proof.}
Since $\mathbf{x}$ is a local minimum, it is a critical point:
$\nabla U_{\mathrm{aSB}}(\mathbf{x}) = 0$. The $i$th component of the
gradient is
\[
\partial_{x_i} U_{\mathrm{aSB}}(\mathbf{x})
= x_i^3 + (\beta - \alpha^2) x_i - \sum_{j=1}^{n} s_{i,j} x_j - \alpha \mu_i.
\]
Therefore
\[
x_i^3 - \alpha^2 x_i
= \sum_{j=1}^{n} s_{i,j} x_j - \beta x_i + \alpha \mu_i,
\]
and factoring the left-hand side,
\[
x_i (x_i - \alpha)(x_i + \alpha)
= \sum_{j=1}^{n} s_{i,j} x_j - \beta x_i + \alpha \mu_i. \tag{S5}\label{eq:S5}
\]
Choose $m \in \{1, \dots, n\}$ with
$|x_m| = \max_i |x_i|$. If $|x_m| \ge 2 \alpha$, then
$|x_m - \alpha| \ge \alpha$ and $|x_m + \alpha| \ge \alpha$, so
\[
|x_m (x_m - \alpha)(x_m + \alpha)| \ge |x_m| \alpha^2.
\]
On the other hand,
\begin{align*}
|x_m (x_m - \alpha)(x_m + \alpha)|
&\le \sum_j |s_{m,j}| |x_j| + |\beta| |x_m| + \alpha M_* \\
&\le R_* |x_m| + |\beta| |x_m| + \tfrac{M_*}{2} |x_m|,
\end{align*}
because $\alpha M_* \le \tfrac{M_*}{2} |x_m|$ when
$|x_m| \ge 2\alpha$. Since
$R_* + |\beta| + \tfrac{M_*}{2} < \alpha^2$, this cannot hold.
Therefore $|x_i| < 2 \alpha$ for every $i$.

Returning to \eqref{eq:S5}, we obtain for every $i$
\[
|x_i (x_i - \alpha)(x_i + \alpha)|
\le R_* \max_j |x_j| + |\beta| |x_i| + \alpha M_*
< (2 R_* + 2|\beta| + M_*) \alpha
< (2 R_* + 2|\beta| + M_* + 1) \alpha.
\]
Set $B_* := 2 R_* + 2|\beta| + M_* + 1$. We claim that for each $i$,
\[
\min\{|x_i + \alpha|, |x_i|, |x_i - \alpha|\} < \tfrac{5 B_*}{\alpha}.
\]
Indeed, if the minimum were $\ge 5 B_* / \alpha$, then \eqref{eq:S5} implies
$x_i \in (-2\alpha, 2\alpha)$, so two of $|x_i + \alpha|, |x_i|,
|x_i - \alpha|$ are at least $\alpha/2$, giving
\[
|x_i (x_i - \alpha)(x_i + \alpha)| \ge \tfrac{5 B_*}{\alpha} \cdot
\tfrac{\alpha}{2} \cdot \tfrac{\alpha}{2}
= \tfrac{5}{4} B_* \alpha > B_* \alpha,
\]
contradicting the bound above. Therefore
$\min\{|x_i + \alpha|, |x_i|, |x_i - \alpha|\} < 5 B_* / \alpha < 1$.

It remains to exclude the branch near 0. If $|x_i| < 1$, the
diagonal entry of the Hessian at $\mathbf{x}$,
$D^2 U_{\mathrm{aSB}}(\mathbf{x})
= \operatorname{diag}(3 x_1^2 + \beta - \alpha^2, \dots,
3 x_n^2 + \beta - \alpha^2) - S$,
satisfies
\[
3 x_i^2 + \beta - \alpha^2 - s_{i,i}
< 3 + |\beta| + R_* - \alpha^2 < 0,
\]
contradicting the necessary positive-semidefiniteness of the Hessian
at a local minimum. Therefore the branch near 0 cannot occur, and
\eqref{eq:S4} follows. $\square$

\begin{lemma}\label{lem:lifted_id_app}
For any $\mathbf{v}, \mathbf{v}' \in \mathcal{C}$,
\[
U_{\mathrm{aSB}}(\alpha \mathbf{v}) - U_{\mathrm{aSB}}(\alpha \mathbf{v}')
= \alpha^2 (E(\mathbf{v}) - E(\mathbf{v}')).
\tag{S6}\label{eq:S6}
\]
\end{lemma}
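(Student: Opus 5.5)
The plan is to evaluate $U_{\mathrm{aSB}}$ directly at a lifted spin state $\alpha\mathbf{v}$ and observe that every term except the coupling and field terms is independent of $\mathbf{v}$. This is exactly the content of Lemma~\ref{lem:lifted}, which we may invoke. To keep the appendix self-contained, I would still record the short computation.

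For $\mathbf{v}\in\mathcal{C}$ we have $v_i^2=1$ and $v_i^4=1$. Hence
\[
\sum_i \tfrac14(\alpha v_i)^4=\tfrac{n}{4}\alpha^4
\quad\text{and}\quad
\tfrac{\beta-\alpha^2}{2}\|\alpha\mathbf{v}\|^2=\tfrac{n}{2}(\beta-\alpha^2)\alpha^2,
\]
and both of these are independent of $\mathbf{v}$. For the remaining two terms, bilinearity gives
\[
-\tfrac12(\alpha\mathbf{v})^{\mathsf T}S(\alpha\mathbf{v})=-\tfrac{\alpha^2}{2}\mathbf{v}^{\mathsf T}S\mathbf{v},
\]
while the field term equals $-\alpha\mu\cdot(\alpha\mathbf{v})=-\alpha^2\mu\cdot\mathbf{v}$. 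The factor $\alpha$ multiplying $\mu$ in \eqref{eq:pote} is exactly what makes the field term scale like $\alpha^2$, the same as the coupling term. Adding these gives
\[
U_{\mathrm{aSB}}(\alpha\mathbf{v})=C_{\alpha,\beta,n}+\alpha^2E(\mathbf{v}).
\]

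Now apply this identity to $\mathbf{v}$ and to $\mathbf{v}'$ and subtract. The constant $C_{\alpha,\beta,n}$ depends only on $\alpha$, $\beta$ and $n$, so it cancels, which yields \eqref{eq:S6}.

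There is no genuine obstacle here. The only point requiring care is the bookkeeping of powers of $\alpha$ in the field term, together with the use of $v_i^2=1$. The identity is exact, so it needs no threshold on $\alpha$.
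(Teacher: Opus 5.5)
Your proposal is correct and takes the same approach as the paper. You evaluate $U_{\mathrm{aSB}}(\alpha\mathbf{v})$ term by term using $\mathbf{v}^{\mathsf{T}}\mathbf{v}=n$ to get $C_{\alpha,\beta,n}+\alpha^2E(\mathbf{v})$, then subtract the same identity for $\mathbf{v}'$ so that the constant cancels.
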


\noindent\textit{Proof.}
For any $\mathbf{v} \in \mathcal{C}$,
\begin{align*}
U_{\mathrm{aSB}}(\alpha \mathbf{v})
&= \tfrac{n}{4} \alpha^4
+ \tfrac{\beta - \alpha^2}{2} \alpha^2 \mathbf{v}^{\mathsf{T}} \mathbf{v}
- \tfrac{1}{2} \alpha^2 \mathbf{v}^{\mathsf{T}} S \mathbf{v}
- \alpha^2 \mu \cdot \mathbf{v} \\
&= \tfrac{n}{4} \alpha^4
+ \tfrac{n}{2} (\beta - \alpha^2) \alpha^2
+ \alpha^2 E(\mathbf{v}),
\end{align*}
since $\mathbf{v}^{\mathsf{T}} \mathbf{v} = n$. Subtracting the same
identity for $\mathbf{v}'$ gives \eqref{eq:S6}. $\square$

\begin{proposition}\label{prop:global_align}
Let $\mathbf{x}_0$ be a global minimum point of $U_{\mathrm{aSB}}$ and
set $\mathbf{v}_0 = \operatorname{sgn}(\mathbf{x}_0)$. Under the
hypotheses of Theorem~\ref{thm:asb_main}, the point
$\mathbf{x}_{\mathbf{v}_0}^*$ from
Lemma~\ref{lem:local_well} is also a global minimum point of $U_{\mathrm{aSB}}$.
\end{proposition}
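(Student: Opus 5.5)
Since $\mathbf{x}_0$ is a global, hence local, minimum point of $U_{\mathrm{aSB}}$, and $\alpha > \alpha_* \ge \max\{\alpha_{\mathrm{loc}}, \alpha_{\mathrm{bd}}, \alpha_{\mathrm{sep}}\}$, Lemma~\ref{lem:localization} gives $|x_{0,i} - \alpha\operatorname{sgn}(x_{0,i})| < 1$ for every $i$. In particular no coordinate of $\mathbf{x}_0$ vanishes, so $\mathbf{v}_0 = \operatorname{sgn}(\mathbf{x}_0) \in \mathcal{C}$ is well defined. We then want to conclude that $\mathbf{x}_0$ lies in the neighbourhood of $\alpha\mathbf{v}_0$ on which Lemma~\ref{lem:local_well} operates. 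Here some care is needed about what $B(\alpha\mathbf{v},1)$ means. The proof of Lemma~\ref{lem:local_well} writes $\mathbf{x} = \alpha\mathbf{v} + \delta$ with $|\delta_i| \le 1$ for each $i$, so the estimates there are really valid on the sup-norm box $\{\|\mathbf{x} - \alpha\mathbf{v}\|_\infty \le 1\}$. We therefore read the ball in the $\ell^\infty$ sense, or note that \eqref{eq:S1} holds on that box by the same computation. With this reading, \eqref{eq:S4} says exactly that $\mathbf{x}_0 \in B(\alpha\mathbf{v}_0,1)$.

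Next, note that $\alpha > \alpha_{\mathrm{ball}}$, so Lemma~\ref{lem:local_well} applies and produces $\mathbf{x}^*_{\mathbf{v}_0}$, a minimizer of $U_{\mathrm{aSB}}$ over the closed neighbourhood $\overline{B(\alpha\mathbf{v}_0,1)}$. Since $\mathbf{x}_0$ belongs to this set, $U_{\mathrm{aSB}}(\mathbf{x}^*_{\mathbf{v}_0}) \le U_{\mathrm{aSB}}(\mathbf{x}_0)$. Because $\mathbf{x}_0$ is a global minimizer on $\mathbb{R}^n$, the reverse inequality $U_{\mathrm{aSB}}(\mathbf{x}_0) \le U_{\mathrm{aSB}}(\mathbf{x}^*_{\mathbf{v}_0})$ also holds. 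Hence the two values are equal, and $\mathbf{x}^*_{\mathbf{v}_0}$ is itself a global minimum point. This is essentially all the proposition asks for.

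The only step I expect to be delicate is the one flagged above: matching the $\ell^\infty$ localization \eqref{eq:S4} with the neighbourhood used in Lemma~\ref{lem:local_well}. If the ball there were meant to be Euclidean, coordinatewise closeness $<1$ would only give Euclidean distance $<\sqrt n$, and the argument would break. In that case the plan is to re-run the expansion in Lemma~\ref{lem:local_well} on the cube $\{\|\delta\|_\infty \le 1\}$. Every term bound in that proof uses only $|\delta_i| \le 1$, so \eqref{eq:S1} and the existence of an interior minimizer go through verbatim, with the boundary estimate replaced by the face bound $\alpha^2\sum_i\delta_i^2 \ge \alpha^2$, which holds whenever some $|\delta_i| = 1$. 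With this adjustment the chain of inequalities above closes.
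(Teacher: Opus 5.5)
Your argument is correct and is the same as the paper's: Lemma~\ref{lem:localization} places $\mathbf{x}_0$ in the neighbourhood of $\alpha\mathbf{v}_0$, so $U_{\mathrm{aSB}}(\mathbf{x}^*_{\mathbf{v}_0}) \le U_{\mathrm{aSB}}(\mathbf{x}_0)$, and global minimality of $\mathbf{x}_0$ gives the reverse inequality. Your remark about the norm is well taken. The paper's step ``hence $\mathbf{x}_0 \in B(\alpha\mathbf{v}_0,1)$'' needs the $\ell^\infty$ reading, while its boundary estimate in Lemma~\ref{lem:local_well} is written for the Euclidean sphere. Your cube version of that lemma is the right repair: every term bound uses only $|\delta_i|\le 1$, and $\alpha^2\sum_i\delta_i^2\ge\alpha^2$ on any face of the cube.
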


\noindent\textit{Proof.}
Since every global minimum is a local minimum, Lemma~\ref{lem:localization} yields
$|x_{0,i} - \alpha \operatorname{sgn}(x_{0,i})| < 1$ for every $i$.
Hence $\mathbf{x}_0 \in B(\alpha \mathbf{v}_0, 1)$. By the defining
property of $\mathbf{x}_{\mathbf{v}_0}^*$,
$U_{\mathrm{aSB}}(\mathbf{x}_{\mathbf{v}_0}^*) \le U_{\mathrm{aSB}}(\mathbf{x}_0)$.
On the other hand, $\mathbf{x}_0$ is a global minimum, so
$U_{\mathrm{aSB}}(\mathbf{x}_0) \le U_{\mathrm{aSB}}(\mathbf{x})$ for
every $\mathbf{x} \in \mathbb{R}^n$. Therefore
$\mathbf{x}_{\mathbf{v}_0}^*$ is also a global minimum point. $\square$

\begin{proof}[Proof of Theorem~\ref{thm:asb_main}]
Let $\mathbf{x}_0$ be a global minimum point of $U_{\mathrm{aSB}}$ and
set $\mathbf{v}_0 = \operatorname{sgn}(\mathbf{x}_0)$. By
Proposition~\ref{prop:global_align}, the point $\mathbf{x}_{\mathbf{v}_0}^*$ from
Lemma~\ref{lem:local_well} is also a global minimum point. Assume by contradiction
that $\mathbf{v}_0$ is not an Ising minimizer. Then there exists
$\mathbf{v}' \in \mathcal{C}$ with $E(\mathbf{v}') < E(\mathbf{v}_0)$.
By the definition of $\Delta_*$,
$E(\mathbf{v}_0) - E(\mathbf{v}') \ge \Delta_* > 0$.
Let $\mathbf{x}_{\mathbf{v}'}^*$ be the minimizer on
$B(\alpha \mathbf{v}', 1)$. Using \eqref{eq:S3} and \eqref{eq:S6},
\begin{align*}
U_{\mathrm{aSB}}(\mathbf{x}_{\mathbf{v}_0}^*)
- U_{\mathrm{aSB}}(\mathbf{x}_{\mathbf{v}'}^*)
&\ge U_{\mathrm{aSB}}(\alpha \mathbf{v}_0)
- U_{\mathrm{aSB}}(\alpha \mathbf{v}') - 2 C_* \alpha \\
&= \alpha^2 (E(\mathbf{v}_0) - E(\mathbf{v}')) - 2 C_* \alpha \\
&\ge \alpha (\alpha \Delta_* - 2 C_*) > 0,
\end{align*}
because $\alpha > \alpha_* \ge \alpha_{\mathrm{gap}} = 2 C_* / \Delta_*$.
The strict positivity says that $\mathbf{x}_{\mathbf{v}_0}^*$ has
strictly larger energy than another admissible point, contradicting
its global minimality. Therefore no such $\mathbf{v}'$ exists, and
$\mathbf{v}_0$ is a minimizer of the Ising problem.
\end{proof}

\subsection*{A.2 bSB soft--hard reduction and asymptotic readout}

\begin{lemma}\label{lem:epi}
Define
$\iota_{[-1,1]^n}(\mathbf{x}) :=
0$ if $\mathbf{x} \in [-1,1]^n$ and $+\infty$ otherwise. Then, as
$p \to \infty$,
$\Phi_p \xrightarrow{\text{epi}} \iota_{[-1,1]^n}$ on $\mathbb{R}^n$.
\end{lemma}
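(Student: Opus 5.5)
I will verify the two defining inequalities of epi-convergence (in the sense of Rockafellar--Wets) directly, using the separable structure $\Phi_p(\mathbf{x}) = \sum_i |x_i|^{2p}/(2p)$. It suffices to show, for every $\mathbf{x}\in\mathbb{R}^n$:
\begin{enumerate}[label=(\alph*)]
\item a \emph{liminf inequality}: $\liminf_{p\to\infty}\Phi_p(\mathbf{x}_p)\ge \iota_{[-1,1]^n}(\mathbf{x})$ for every sequence $\mathbf{x}_p\to\mathbf{x}$;
\item a \emph{recovery sequence}: some $\mathbf{x}_p\to\mathbf{x}$ with $\limsup_{p}\Phi_p(\mathbf{x}_p)\le\iota_{[-1,1]^n}(\mathbf{x})$.
\end{enumerate}
Since $p$ may be taken along any sequence $p\to\infty$ (integer or real, $p\ge 2$), I will argue sequentially throughout.

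\textbf{Recovery sequence.} For $\mathbf{x}\notin[-1,1]^n$ the right side is $+\infty$ and (b) is trivial with $\mathbf{x}_p=\mathbf{x}$. For $\mathbf{x}\in[-1,1]^n$ I take the constant sequence $\mathbf{x}_p=\mathbf{x}$. Then $0\le\Phi_p(\mathbf{x})\le n/(2p)\to 0$, since $|x_i|^{2p}\le 1$. This also gives the ``uniform vanishing on the box'' part of Proposition~\ref{prop:epi}: $\sup_{[-1,1]^n}\Phi_p = n/(2p)$.

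\textbf{Liminf inequality.} If $\mathbf{x}\in[-1,1]^n$, (a) is immediate because $\Phi_p\ge 0$. If $\mathbf{x}\notin[-1,1]^n$, pick an index $i$ with $|x_i|>1$ and set $\varepsilon=(|x_i|-1)/2>0$. For $\mathbf{x}_p\to\mathbf{x}$ we eventually have $|x_{p,i}|\ge 1+\varepsilon$, so
\[
\Phi_p(\mathbf{x}_p)\ge \frac{(1+\varepsilon)^{2p}}{2p}\to\infty .
\]
Exponential growth dominates the linear denominator, e.g.\ via $(1+\varepsilon)^{2p}\ge 1+2p\varepsilon+\binom{2p}{2}\varepsilon^2$ or simply $\log$ comparison. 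This is the only mildly delicate step: the estimate must be made uniformly along the moving sequence, not just at the fixed point $\mathbf{x}$. Choosing the margin $\varepsilon$ as above handles this, and it also yields the pointwise divergence outside the box.

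\textbf{Consequence for $U_{p,\alpha}$.} This part belongs to Proposition~\ref{prop:epi} rather than to the lemma itself. Since $U_{\Box,\alpha}$ is a continuous quadratic on $\mathbb{R}^n$, it can be added to the epi-convergent family by the standard stability of epi-limits under continuous (finite-valued) perturbations. In (a), $U_{\Box,\alpha}(\mathbf{x}_p)\to U_{\Box,\alpha}(\mathbf{x})$. In (b), the same constant recovery sequence works. Hence $U_{p,\alpha}$ epi-converges to $U_{\Box,\alpha}+\iota_{[-1,1]^n}$.
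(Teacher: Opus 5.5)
Your proof is correct and follows essentially the same route as the paper: the liminf inequality uses $\Phi_p\ge 0$ on the box, and off the box the margin $\varepsilon=(|x_{i_0}|-1)/2$ forces $(1+\varepsilon)^{2p}/(2p)\to\infty$; the recovery step uses the constant sequence with $\Phi_p(\mathbf{x})\le n/(2p)$. Your closing paragraph on $U_{p,\alpha}$ is a correct add-on (epi-limits are stable under a fixed continuous perturbation), which the paper asserts in Proposition~\ref{prop:epi} without a separate argument.
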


\noindent\textit{Proof.}
We verify the two defining inequalities of epi-convergence.

\noindent\textit{(i) Lower bound.}
Let $\mathbf{x}_p \to \mathbf{x}$ in $\mathbb{R}^n$. We must show
$\liminf_{p \to \infty} \Phi_p(\mathbf{x}_p) \ge \iota_{[-1,1]^n}(\mathbf{x})$.
If $\mathbf{x} \in [-1,1]^n$ then $\iota_{[-1,1]^n}(\mathbf{x}) = 0$
and the claim follows from $\Phi_p \ge 0$. If $\mathbf{x} \notin
[-1,1]^n$, choose an index $i_0$ with $|x_{i_0}| > 1$ and set
$\varepsilon := (|x_{i_0}| - 1)/2 > 0$. Since
$x_{p, i_0} \to x_{i_0}$, we have $|x_{p, i_0}| \ge 1 + \varepsilon$
for all large $p$. Hence
$\Phi_p(\mathbf{x}_p) \ge |x_{p, i_0}|^{2p} / (2p)
\ge (1 + \varepsilon)^{2p} / (2p) \to +\infty$.

\noindent\textit{(ii) Recovery sequence.}
Fix $\mathbf{x} \in \mathbb{R}^n$. If $\mathbf{x} \notin [-1,1]^n$
then $\iota_{[-1,1]^n}(\mathbf{x}) = +\infty$ and any convergent
choice is admissible; take $\mathbf{y}_p = \mathbf{x}$. If
$\mathbf{x} \in [-1,1]^n$, take $\mathbf{y}_p = \mathbf{x}$. Then
$0 \le \Phi_p(\mathbf{y}_p) = \Phi_p(\mathbf{x}) =
\sum_i |x_i|^{2p}/(2p) \le n/(2p) \to 0
= \iota_{[-1,1]^n}(\mathbf{x})$. So the recovery inequality follows.

Combining (i) and (ii) gives
$\Phi_p \xrightarrow{\text{epi}} \iota_{[-1,1]^n}$. $\square$

\begin{theorem}\label{thm:uniform_bound}
Fix $\alpha, S, \mu$ and $p \ge 1$. Then for every
$\mathbf{x} \in [-1,1]^n$,
\[
0 \le U_{p,\alpha}(\mathbf{x}) - U_{\Box,\alpha}(\mathbf{x})
= \Phi_p(\mathbf{x}) \le \tfrac{n}{2p}.
\tag{S7}\label{eq:S7}
\]
\end{theorem}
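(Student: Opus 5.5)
The proof is a direct computation from the definitions. By construction $U_{p,\alpha} = \Phi_p + U_{\Box,\alpha}$. Here $U_{\Box,\alpha}$ is read as the quadratic polynomial in \eqref{eq:hardbox}, restricted to the box (so no indicator is involved). Hence for $\mathbf{x} \in [-1,1]^n$ the difference $U_{p,\alpha}(\mathbf{x}) - U_{\Box,\alpha}(\mathbf{x})$ is exactly $\Phi_p(\mathbf{x}) = \sum_{i=1}^n |x_i|^{2p}/(2p)$. This gives the middle equality in \eqref{eq:S7} with no estimate at all.

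For the two inequalities, argue termwise:

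\begin{description}
\item[Lower bound.] Each summand $|x_i|^{2p}/(2p)$ is nonnegative because $p \ge 1$, so $\Phi_p(\mathbf{x}) \ge 0$.
\item[Upper bound.] On the box, $|x_i| \le 1$, so $|x_i|^{2p} \le 1$ for every $i$. Summing the $n$ terms gives $\Phi_p(\mathbf{x}) \le n/(2p)$.
\end{description}

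This is the same estimate already used in the recovery-sequence step of Lemma~\ref{lem:epi}. The only difference is that it now holds uniformly in $\mathbf{x}$ over the whole box rather than pointwise.

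There is no real obstacle. The one point to state explicitly is that $U_{\Box,\alpha}$ is the smooth quadratic restricted to $[-1,1]^n$, not $U_{\Box,\alpha} + \delta_{[-1,1]^n}$ extended to all of $\mathbb{R}^n$. With that reading the subtraction is between finite values, and the claim is precisely the uniform convergence on the box asserted in Proposition~\ref{prop:epi}, with the explicit rate $O(1/p)$.
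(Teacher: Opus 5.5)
Your proposal is correct and follows the paper's proof: the identity $U_{p,\alpha} - U_{\Box,\alpha} = \Phi_p$ is immediate from the definition, and $|x_i|^{2p} \le 1$ on the box gives $\Phi_p(\mathbf{x}) \le n/(2p)$. Your explicit nonnegativity step and your remark that $U_{\Box,\alpha}$ is the finite quadratic on the box, not the indicator-augmented function, are harmless clarifications the paper leaves implicit.
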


\noindent\textit{Proof.}
The identity $U_{p,\alpha} - U_{\Box,\alpha} = \Phi_p$ is immediate.
For $\mathbf{x} \in [-1,1]^n$, each $|x_i| \le 1$ gives
$|x_i|^{2p} \le 1$, so
$\Phi_p(\mathbf{x}) = \sum_i |x_i|^{2p} / (2p) \le n/(2p)$. $\square$

\begin{lemma}\label{lem:sublevel}
Fix $\alpha, S, \mu$, and $p \ge 1$. Then there exists $R_p > 1$ such
that every global minimizer of $U_{p,\alpha}$ on $\mathbb{R}^n$ lies
in $[-R_p, R_p]^n$.
\end{lemma}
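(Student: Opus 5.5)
We will prove Lemma~\ref{lem:sublevel} by a coercivity argument. The key observation is that $\Phi_p$ grows like $|x_i|^{2p}/(2p)$ with $2p \ge 2$, while the remaining part $U_{\Box,\alpha}$ is a quadratic polynomial. Coercivity then confines every minimizer to a sublevel set, and that set will sit inside an explicit cube.

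\textbf{Step 1: a lower bound on $U_{p,\alpha}$.} Write $\lambda := \max\{0, \alpha^2 - 1\} + R_*$ and $M := \max_i |\mu_i|$. Since $\mathbf{x}^{\mathsf T} S \mathbf{x} \le R_* \|\mathbf{x}\|^2$ (row-sum bound on the spectral radius), we obtain
\[
U_{p,\alpha}(\mathbf{x}) \ge \sum_{i=1}^n g(x_i), \qquad g(t) := \tfrac{|t|^{2p}}{2p} - \tfrac{\lambda}{2} t^2 - M|t| .
\]
This separable function of one variable is the basic object of the argument.

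\textbf{Step 2: the comparison value.} Every global minimizer $\mathbf{x}$ satisfies $U_{p,\alpha}(\mathbf{x}) \le U_{p,\alpha}(0) = 0$. We will use this together with a uniform lower bound on $g$.

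\textbf{Step 3: controlling $g$.} The function $g$ is continuous and coercive, because the power $2p$ exceeds $2$ when $p > 1$. Hence $g$ is bounded below by some $-G \le 0$. Fix a coordinate $i$. Then
\[
g(x_i) \le -\sum_{j \ne i} g(x_j) \le (n-1) G .
\]
By coercivity, the set $\{t : g(t) \le (n-1)G\}$ is bounded, so it is contained in $[-R_p, R_p]$ for some $R_p$. We may enlarge $R_p$ so that $R_p > 1$.

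\textbf{The main obstacle: the borderline case $p = 1$.} The lemma is stated for $p \ge 1$, and at $p = 1$ we have $\Phi_1 = \|\mathbf{x}\|^2/2$, so $U_{1,\alpha}$ is purely quadratic. In that case coercivity requires the Hessian $(2 - \alpha^2) I - S$ to be positive definite.
\begin{itemize}
\item If it is positive definite, minimizers lie in a bounded ball and the argument goes through.
\item If it is not, then either there is no global minimizer and the lemma is vacuous, or the function is unbounded below so that no global minimizer exists. The remaining possibility is a degenerate direction along which $U$ is constant, and then minimizers form an unbounded affine set, so the lemma fails as stated.
\end{itemize}
I would therefore handle $p \ge 2$ (the range in which the paper defines $\Phi_p$) by Steps~1--3. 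For $p = 1$ I would note explicitly that the conclusion needs positive definiteness, or that it holds vacuously when no global minimizer exists. In the intended regime the constant $R_p$ can also be made explicit, for example as any $R_p > \max\{1, (2p(\lambda + M + (n-1)G))^{1/(2p-2)}\}$, but existence of such an $R_p$ is all that is required.
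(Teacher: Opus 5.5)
Your argument is correct for every $p>1$ and follows essentially the paper's route: coercivity of $U_{p,\alpha}$ plus comparison with a reference value. The only differences are bookkeeping. You use $U_{p,\alpha}(\mathbf{0})=0$ and a coordinatewise bound, whereas the paper uses a vertex value and a bound in $\|\mathbf{x}\|_\infty$.

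Your $p=1$ caveat is also right, and it exposes a real gap in the paper's proof. The paper claims that $t^{2p}/(2p)$ dominates the quadratic term $C_\alpha(S)\,n\,t^2$ ``since $2p\ge 2$'', but this actually needs $2p>2$. Indeed $S=0$, $\mu=0$, $\alpha=\sqrt{2}$ gives $U_{1,\alpha}\equiv 0$, so every point is a global minimizer. The lemma should therefore be stated for $p>1$, which still covers the range $p\ge 2$ on which $\Phi_p$ is defined in the main text.
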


\noindent\textit{Proof.}
Choose a reference vertex $\mathbf{v} \in \{-1,1\}^n$ and let
$M_p := U_{p,\alpha}(\mathbf{v})$. Any global minimizer $\mathbf{x}_p$
satisfies $U_{p,\alpha}(\mathbf{x}_p) \le M_p$. Write
$U_{p,\alpha}(\mathbf{x}) =
\sum_i |x_i|^{2p}/(2p)
+ \tfrac{1 - \alpha^2}{2} \|\mathbf{x}\|^2
- \tfrac{1}{2} \mathbf{x}^{\mathsf{T}} S \mathbf{x}
- \mu \cdot \mathbf{x}$.
Let $\|\mu\|_1 = \sum_i |\mu_i|$ and
$C_\alpha(S) := \tfrac{|1 - \alpha^2|}{2} + \tfrac{\|S\|_{\mathrm{op}}}{2}$.
Using $|\mathbf{x}^{\mathsf{T}} S \mathbf{x}| \le \|S\|_{\mathrm{op}} \|\mathbf{x}\|^2$,
$\|\mathbf{x}\|^2 \le n \|\mathbf{x}\|_\infty^2$, and
$|\mu \cdot \mathbf{x}| \le \|\mu\|_1 \|\mathbf{x}\|_\infty$, we obtain
\[
U_{p,\alpha}(\mathbf{x}) \ge \tfrac{\|\mathbf{x}\|_\infty^{2p}}{2p}
- C_\alpha(S) n \|\mathbf{x}\|_\infty^2
- \|\mu\|_1 \|\mathbf{x}\|_\infty.
\]
The right-hand side tends to $+\infty$ as $t := \|\mathbf{x}\|_\infty \to \infty$,
since $2p \ge 2$ and $t^{2p}/(2p)$ dominates the quadratic and
linear terms. Hence there exists $R_p > 1$ such that
$t^{2p}/(2p) - C_\alpha(S) n t^2 - \|\mu\|_1 t > M_p$ for all
$t \ge R_p$. This gives
$U_{p,\alpha}(\mathbf{x}) > M_p$ whenever
$\|\mathbf{x}\|_\infty \ge R_p$, so no global minimizer lies outside
$[-R_p, R_p]^n$. $\square$

\begin{corollary}\label{cor:boxed_min}
Assume $(\alpha^2 - 1) I + S \succeq 0$. Then the minimum values on
the box satisfy
\[
\inf_{\mathbf{x} \in [-1,1]^n} U_{p,\alpha}(\mathbf{x}) \to
\inf_{\mathbf{x} \in [-1,1]^n} U_{\Box,\alpha}(\mathbf{x})
= \tfrac{1 - \alpha^2}{2} n + \min_{\mathbf{v} \in \{-1,1\}^n} E(\mathbf{v}).
\tag{S8}\label{eq:S8}
\]
Moreover, if $\mathbf{x}_p$ is a global minimizer of $U_{p,\alpha}$ on
$\mathbb{R}^n$ and if
$U_{\Box,\alpha}(\operatorname{sgn}(\mathbf{x}_p)) = U_{\Box,\alpha}(\mathbf{x}_p)$,
then for every $\mathbf{w} \in \{-1,1\}^n$,
\[
U_{\Box,\alpha}(\operatorname{sgn}(\mathbf{x}_p)) - U_{\Box,\alpha}(\mathbf{w})
\le \tfrac{n}{2p}.
\tag{S9}\label{eq:S9}
\]
In particular, let
$\Delta_* := \min\{E(\mathbf{v}) - E(\mathbf{w}) : E(\mathbf{w}) < E(\mathbf{v}),\,
\mathbf{v}, \mathbf{w} \in \{-1,1\}^n\}$,
with $\Delta_* = 1$ if the set is empty. If $n/(2p) < \Delta_*$,
then $\operatorname{sgn}(\mathbf{x}_p)$ is an Ising minimizer.
\end{corollary}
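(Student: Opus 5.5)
The plan has three parts: the limit of the infima in \eqref{eq:S8}, the near-optimality bound \eqref{eq:S9}, and the gap conclusion. All three rest on the uniform sandwich \eqref{eq:S7}, on concavity of $U_{\Box,\alpha}$ under $(\alpha^2-1)I+S\succeq 0$ (Theorem~\ref{thm:bsb_main}(i)), and on the vertex identity of Lemma~\ref{lem:vertex}.

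\emph{Equality and limit in \eqref{eq:S8}.} First I identify the limit value. The Hessian $(1-\alpha^2)I-S$ is negative semidefinite, so $U_{\Box,\alpha}$ is concave on the polytope $[-1,1]^n$. A concave continuous function on a compact polytope attains its minimum at a vertex: write any $\mathbf{x}$ as a convex combination of vertices and use $U(\sum\lambda_k\mathbf{v}_k)\ge\sum\lambda_k U(\mathbf{v}_k)\ge\min_k U(\mathbf{v}_k)$. Lemma~\ref{lem:vertex} then gives
\[
\inf_{[-1,1]^n} U_{\Box,\alpha}=\tfrac{1-\alpha^2}{2}n+\min_{\mathbf{v}} E(\mathbf{v}).
\]
For the limit, \eqref{eq:S7} gives $U_{\Box,\alpha}\le U_{p,\alpha}\le U_{\Box,\alpha}+n/(2p)$ pointwise on the box. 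Taking infima over the box yields
\[
\inf U_{\Box,\alpha}\le \inf U_{p,\alpha}\le \inf U_{\Box,\alpha}+n/(2p),
\]
and letting $p\to\infty$ proves \eqref{eq:S8}.

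\emph{The bound \eqref{eq:S9}.} Let $\mathbf{x}_p$ be a global minimizer on $\mathbb{R}^n$, so $U_{p,\alpha}(\mathbf{x}_p)\le U_{p,\alpha}(\mathbf{w})$ for every vertex $\mathbf{w}$. On the right, \eqref{eq:S7} gives $U_{p,\alpha}(\mathbf{w})\le U_{\Box,\alpha}(\mathbf{w})+n/(2p)$. On the left I need $U_{\Box,\alpha}(\mathbf{x}_p)\le U_{p,\alpha}(\mathbf{x}_p)$, which holds for every $\mathbf{x}\in\mathbb{R}^n$ because $\Phi_p\ge 0$, with $U_{\Box,\alpha}$ read as the polynomial formula; no box membership is needed. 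Chaining these and using the hypothesis $U_{\Box,\alpha}(\operatorname{sgn}\mathbf{x}_p)=U_{\Box,\alpha}(\mathbf{x}_p)$ gives
\[
U_{\Box,\alpha}(\operatorname{sgn}\mathbf{x}_p)\le U_{\Box,\alpha}(\mathbf{w})+n/(2p),
\]
which is \eqref{eq:S9}. If $\operatorname{sgn}$ has zero entries, I take the convention $\operatorname{sgn}(0)=1$ so that $\operatorname{sgn}\mathbf{x}_p$ is a vertex.

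\emph{Final claim.} Set $\mathbf{v}=\operatorname{sgn}\mathbf{x}_p$. By Lemma~\ref{lem:vertex}, \eqref{eq:S9} becomes $E(\mathbf{v})-E(\mathbf{w})\le n/(2p)<\Delta_*$ for all $\mathbf{w}$. If $\mathbf{v}$ were not an Ising minimizer, there would be some $\mathbf{w}$ with $E(\mathbf{w})<E(\mathbf{v})$. By the definition of $\Delta_*$ this forces $E(\mathbf{v})-E(\mathbf{w})\ge\Delta_*$, a contradiction. In the empty-set case $E$ is constant and the claim is trivial.

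The only delicate point is the one-sided inequality $U_{\Box,\alpha}\le U_{p,\alpha}$ at a possibly out-of-box minimizer. I handle it through nonnegativity of $\Phi_p$ rather than \eqref{eq:S7}, whose statement is restricted to the box. The extra hypothesis on $\operatorname{sgn}\mathbf{x}_p$ absorbs the remaining gap between $\mathbf{x}_p$ and its vertex, so no compactness argument is required.
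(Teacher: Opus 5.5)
Your proposal is correct and follows the paper's proof essentially step for step. You use the sandwich \eqref{eq:S7} for the limit of the boxed infima, and concavity plus the vertex identity for the value (the paper simply cites Theorem~\ref{thm:bsb_main}(i) here). For \eqref{eq:S9} you use the same chain $U_{\Box,\alpha}(\operatorname{sgn}\mathbf{x}_p)=U_{\Box,\alpha}(\mathbf{x}_p)\le U_{p,\alpha}(\mathbf{x}_p)\le U_{p,\alpha}(\mathbf{w})\le U_{\Box,\alpha}(\mathbf{w})+n/(2p)$, justified by $\Phi_p\ge 0$ on all of $\mathbb{R}^n$, and you finish with the same $\Delta_*$ contradiction. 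Your explicit convex-combination argument and the $\operatorname{sgn}(0)=1$ convention only make precise points the paper leaves implicit.
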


\noindent\textit{Proof.}
From Theorem~\ref{thm:uniform_bound},
$U_{\Box,\alpha}(\mathbf{x}) \le U_{p,\alpha}(\mathbf{x}) \le
U_{\Box,\alpha}(\mathbf{x}) + n/(2p)$ for $\mathbf{x} \in [-1,1]^n$.
Taking the infimum gives boxed-minimum convergence. The identity
with the Ising energy follows from Theorem~\ref{thm:bsb_main}(i).

Now let $\mathbf{x}_p$ minimize $U_{p,\alpha}$ on $\mathbb{R}^n$. For
any vertex $\mathbf{w} \in \{-1,1\}^n \subset [-1,1]^n$,
$U_{p,\alpha}(\mathbf{x}_p) \le U_{p,\alpha}(\mathbf{w})
= U_{\Box,\alpha}(\mathbf{w}) + n/(2p)$.
Since $U_{\Box,\alpha} \le U_{p,\alpha}$ on $\mathbb{R}^n$, and by
the shadow assumption,
$U_{\Box,\alpha}(\operatorname{sgn}(\mathbf{x}_p)) = U_{\Box,\alpha}(\mathbf{x}_p)$,
we obtain
$U_{\Box,\alpha}(\operatorname{sgn}(\mathbf{x}_p)) \le
U_{p,\alpha}(\mathbf{x}_p) \le U_{\Box,\alpha}(\mathbf{w}) + n/(2p)$,
which is \eqref{eq:S9}.

Assume now $n/(2p) < \Delta_*$. If $\operatorname{sgn}(\mathbf{x}_p)$
were not an Ising minimizer, there would be a spin vector $\mathbf{w}$
with $E(\mathbf{w}) < E(\operatorname{sgn}(\mathbf{x}_p))$. By
definition, $\Delta_* \le E(\operatorname{sgn}(\mathbf{x}_p)) - E(\mathbf{w})$.
The vertex identity for $U_{\Box,\alpha}$ gives
$U_{\Box,\alpha}(\operatorname{sgn}(\mathbf{x}_p)) -
U_{\Box,\alpha}(\mathbf{w})
= E(\operatorname{sgn}(\mathbf{x}_p)) - E(\mathbf{w})$,
so \eqref{eq:S9} implies $\Delta_* \le n/(2p)$, contradicting the assumption.
Hence $\operatorname{sgn}(\mathbf{x}_p)$ is an Ising minimizer. $\square$

\begin{proof}[Proof of Theorem~\ref{thm:bsb_main}(ii)]
Suppose that infinitely many indices $p$ give a non-optimal
$\mathbf{v}_p := \operatorname{sgn}(\mathbf{x}_p)$. Since the spin
set is finite, one non-optimal spin vector $\mathbf{v}$ occurs along
an infinite subsequence $p_k \to \infty$:
$\operatorname{sgn}(\mathbf{x}_{p_k}) = \mathbf{v}$.
By the compactness assumption, $\{\mathbf{x}_{p_k}\}_{k \ge 1}$ is
contained in a compact $K_{\mathbf{v}} \subset [-1,1]^n$, so we may
pass to a subsequence (still $p_k$) with
$\mathbf{x}_{p_k} \to \mathbf{x}^*$.
Since each $\mathbf{x}_{p_k}$ is a global minimizer of
$U_{p_k, \alpha}$ and $U_{p, \alpha} - U_{\Box, \alpha} = \Phi_p$
with $\Phi_p \to 0$ uniformly on $[-1,1]^n$, the limit point
$\mathbf{x}^*$ is a global minimizer of the hard-box problem on
$[-1,1]^n$: for every $\mathbf{y} \in [-1,1]^n$,
$U_{\Box,\alpha}(\mathbf{x}_{p_k}) \le U_{p_k, \alpha}(\mathbf{x}_{p_k})
\le U_{p_k, \alpha}(\mathbf{y})
= U_{\Box,\alpha}(\mathbf{y}) + \Phi_{p_k}(\mathbf{y})$,
and passing to the limit gives
$U_{\Box,\alpha}(\mathbf{x}^*) \le U_{\Box,\alpha}(\mathbf{y})$.

For each coordinate, the recurring sign pattern is preserved in the
limit (weakly): $v_i = 1$ implies $x_i^* \ge 0$, while
$v_i = -1$ implies $x_i^* \le 0$. Therefore $\mathbf{x}^*$ belongs
to the face of $[-1,1]^n$ determined by these sign constraints, and
$\mathbf{v}$ is among the face's vertices.

Since $(\alpha^2 - 1) I + S \succeq 0$, $U_{\Box,\alpha}$ is concave
on the box. Write $\mathbf{x}^*$ as a convex combination of the
vertices of its face: $\mathbf{x}^* = \sum_u \lambda_u \mathbf{u}$,
with $\lambda_u \ge 0$, $\sum_u \lambda_u = 1$, and every vertex
$\mathbf{u}$ with $\lambda_u > 0$ in the same face as $\mathbf{v}$.
Concavity gives
$U_{\Box,\alpha}(\mathbf{x}^*) \ge \sum_u \lambda_u U_{\Box,\alpha}(\mathbf{u})$,
while global minimality gives
$U_{\Box,\alpha}(\mathbf{x}^*) \le U_{\Box,\alpha}(\mathbf{u})$ for
every vertex $\mathbf{u}$. Hence every vertex with positive weight
has the same hard-box value as $\mathbf{x}^*$; in particular,
$U_{\Box,\alpha}(\mathbf{v}) = U_{\Box,\alpha}(\mathbf{x}^*)$, so
$\mathbf{v}$ is a hard-box minimizing vertex. By the vertex identity,
$\mathbf{v}$ is an Ising minimizer, contradicting the choice of
$\mathbf{v}$ as non-optimal. Therefore only finitely many non-optimal
sign readouts can occur.
\end{proof}

\begin{remark}
The above results are static variational statements, not a convergence
proof for the full projected bSB dynamics with clipping and momentum.
They show that the soft-penalty landscapes approach the
hard-confinement landscape and that, under the PSD condition, the
limiting vertex problem agrees with the Ising problem. The finite-time
bridge used by iSTAR is instead the freezing criterion proved in
Section~A.3 and evaluated online in the main text.
\end{remark}

\subsection*{A.3 Frozen-set identity and freezing criteria}\label{sec:app_a3}

\begin{theorem}\label{thm:frozen_app}
Let $H \subset \{1, \dots, n\}$ and write $Q := H^c$. Fix a spin
pattern $\mathbf{v}_H \in \{-1,1\}^{|H|}$ on $H$. For each
$\mathbf{v}_Q \in \{-1,1\}^{|Q|}$, write
$\mathbf{v} = (\mathbf{v}_Q, \mathbf{v}_H) \in \{-1,1\}^n$. Then the
Ising energy admits the decomposition
\[
E(\mathbf{v}) = -\tfrac{1}{2} \mathbf{v}_Q^{\mathsf{T}} S_{QQ} \mathbf{v}_Q
- \mu_{\mathrm{eff}} \cdot \mathbf{v}_Q
+ C_H(\mathbf{v}_H),
\tag{S10}\label{eq:S10}
\]
where
$\mu_{\mathrm{eff}} := \mu_Q + S_{QH} \mathbf{v}_H$ and
$C_H(\mathbf{v}_H) :=
- \tfrac{1}{2} \mathbf{v}_H^{\mathsf{T}} S_{HH} \mathbf{v}_H
- \mu_H \cdot \mathbf{v}_H$.
Consequently, under the constraint that spins on $H$ are fixed at
$\mathbf{v}_H$, minimizing the full Ising energy over
$\mathbf{v} \in \{-1,1\}^n$ is equivalent to minimizing the reduced
Ising energy
$E_Q(\mathbf{v}_Q) :=
- \tfrac{1}{2} \mathbf{v}_Q^{\mathsf{T}} S_{QQ} \mathbf{v}_Q
- \mu_{\mathrm{eff}} \cdot \mathbf{v}_Q$
over $\mathbf{v}_Q \in \{-1,1\}^{|Q|}$.
\end{theorem}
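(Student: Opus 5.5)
The plan is a direct block decomposition of the quadratic form, followed by the observation that an additive constant does not change the set of minimizers. After relabeling coordinates so that $\mathbf{v} = (\mathbf{v}_Q, \mathbf{v}_H)$, partition the symmetric matrix $S$ into blocks
\[
S = \begin{pmatrix} S_{QQ} & S_{QH} \\ S_{HQ} & S_{HH} \end{pmatrix},
\qquad S_{HQ} = S_{QH}^{\mathsf{T}},
\]
and split $\mu = (\mu_Q, \mu_H)$ in the same way. This relabeling is only a permutation of indices, so it leaves $E$ unchanged. Symmetry of $S$ is the one structural property we rely on.

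Expanding the quadratic form gives
\[
\mathbf{v}^{\mathsf{T}} S \mathbf{v} = \mathbf{v}_Q^{\mathsf{T}} S_{QQ} \mathbf{v}_Q + \mathbf{v}_Q^{\mathsf{T}} S_{QH} \mathbf{v}_H + \mathbf{v}_H^{\mathsf{T}} S_{HQ} \mathbf{v}_Q + \mathbf{v}_H^{\mathsf{T}} S_{HH} \mathbf{v}_H .
\]
Each cross term is a scalar, so $\mathbf{v}_H^{\mathsf{T}} S_{HQ} \mathbf{v}_Q = (\mathbf{v}_Q^{\mathsf{T}} S_{QH} \mathbf{v}_H)^{\mathsf{T}} = \mathbf{v}_Q^{\mathsf{T}} S_{QH} \mathbf{v}_H$. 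The two cross terms therefore combine, and the factor $-\tfrac12$ in $E$ turns their sum into $-\mathbf{v}_Q^{\mathsf{T}} S_{QH} \mathbf{v}_H = -(S_{QH}\mathbf{v}_H)\cdot \mathbf{v}_Q$. The linear term splits as $\mu \cdot \mathbf{v} = \mu_Q \cdot \mathbf{v}_Q + \mu_H \cdot \mathbf{v}_H$.

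Next, group the terms by their dependence on the free spins. The terms linear in $\mathbf{v}_Q$ combine into $-(\mu_Q + S_{QH}\mathbf{v}_H)\cdot \mathbf{v}_Q = -\mu_{\mathrm{eff}} \cdot \mathbf{v}_Q$. The terms involving only $\mathbf{v}_H$ form $C_H(\mathbf{v}_H)$. The quadratic block $-\tfrac12 \mathbf{v}_Q^{\mathsf{T}} S_{QQ}\mathbf{v}_Q$ remains as is. Together these give \eqref{eq:S10}.

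For the equivalence statement, note that once $\mathbf{v}_H$ is fixed, the map $\mathbf{v}_Q \mapsto (\mathbf{v}_Q, \mathbf{v}_H)$ is a bijection from $\{-1,1\}^{|Q|}$ onto the constrained feasible set. Along this bijection, $E(\mathbf{v}) = E_Q(\mathbf{v}_Q) + C_H(\mathbf{v}_H)$, where $C_H(\mathbf{v}_H)$ is a constant. Hence the minimizers correspond exactly, and the optimal values differ by $C_H(\mathbf{v}_H)$.

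We should also check that $E_Q$ is a genuine Ising energy of the form \eqref{eq:ising}. Indeed, $S_{QQ}$ is symmetric and has zero diagonal because $s_{ii}=0$.

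There is no real obstacle here. The only step needing care is combining the two cross terms via the symmetry of $S$, which is exactly what produces the factor one (rather than one half) in front of $S_{QH}\mathbf{v}_H$ in $\mu_{\mathrm{eff}}$.
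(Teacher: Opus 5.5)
Your proposal is correct and follows the paper's proof essentially line for line. Both reorder coordinates into $(Q,H)$ blocks, merge the two cross terms using $S_{HQ}=S_{QH}^{\mathsf{T}}$, regroup the terms into $-\tfrac12\mathbf{v}_Q^{\mathsf{T}}S_{QQ}\mathbf{v}_Q-\mu_{\mathrm{eff}}\cdot\mathbf{v}_Q+C_H(\mathbf{v}_H)$, and conclude the equivalence because $C_H(\mathbf{v}_H)$ is a constant once $\mathbf{v}_H$ is fixed. Your extra checks, the explicit bijection and the remark that $S_{QQ}$ is symmetric with zero diagonal, are harmless refinements.
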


\noindent\textit{Proof.}
Reorder coordinates so that $Q$ comes first and $H$ comes last.
Write
$S = \bigl(\begin{smallmatrix} S_{QQ} & S_{QH} \\ S_{HQ} & S_{HH} \end{smallmatrix}\bigr)$,
$\mu = \bigl(\begin{smallmatrix} \mu_Q \\ \mu_H \end{smallmatrix}\bigr)$,
$\mathbf{v} = \bigl(\begin{smallmatrix} \mathbf{v}_Q \\ \mathbf{v}_H \end{smallmatrix}\bigr)$.
Since $S$ is symmetric, $S_{HQ} = S_{QH}^{\mathsf{T}}$. Therefore
\begin{align*}
\mathbf{v}^{\mathsf{T}} S \mathbf{v}
&= \mathbf{v}_Q^{\mathsf{T}} S_{QQ} \mathbf{v}_Q
+ \mathbf{v}_Q^{\mathsf{T}} S_{QH} \mathbf{v}_H
+ \mathbf{v}_H^{\mathsf{T}} S_{HQ} \mathbf{v}_Q
+ \mathbf{v}_H^{\mathsf{T}} S_{HH} \mathbf{v}_H \\
&= \mathbf{v}_Q^{\mathsf{T}} S_{QQ} \mathbf{v}_Q
+ 2 \mathbf{v}_Q^{\mathsf{T}} S_{QH} \mathbf{v}_H
+ \mathbf{v}_H^{\mathsf{T}} S_{HH} \mathbf{v}_H,
\end{align*}
and $\mu \cdot \mathbf{v} = \mu_Q \cdot \mathbf{v}_Q + \mu_H \cdot \mathbf{v}_H$.
Substituting into the definition of $E(\mathbf{v})$,
\begin{align*}
E(\mathbf{v})
&= -\tfrac{1}{2} \mathbf{v}_Q^{\mathsf{T}} S_{QQ} \mathbf{v}_Q
- \mathbf{v}_Q^{\mathsf{T}} S_{QH} \mathbf{v}_H
- \mu_Q \cdot \mathbf{v}_Q
- \tfrac{1}{2} \mathbf{v}_H^{\mathsf{T}} S_{HH} \mathbf{v}_H
- \mu_H \cdot \mathbf{v}_H \\
&= -\tfrac{1}{2} \mathbf{v}_Q^{\mathsf{T}} S_{QQ} \mathbf{v}_Q
- (\mu_Q + S_{QH} \mathbf{v}_H) \cdot \mathbf{v}_Q
+ C_H(\mathbf{v}_H),
\end{align*}
which is \eqref{eq:S10}. Since $C_H(\mathbf{v}_H)$ depends only on the fixed
spin pattern on $H$, the full minimization with $\mathbf{v}_H$ held
fixed is equivalent to minimizing
$-\tfrac{1}{2} \mathbf{v}_Q^{\mathsf{T}} S_{QQ} \mathbf{v}_Q
- \mu_{\mathrm{eff}} \cdot \mathbf{v}_Q$ over
$\mathbf{v}_Q \in \{-1,1\}^{|Q|}$. $\square$

\begin{proposition}\label{prop:one_step_app}
Consider one bSB Euler step with step size $\tau > 0$:
\begin{align*}
\tilde{y}_i &= y_i^{(k)} + \tau \bigl( (\alpha_k^2 - 1) x_i^{(k)}
+ \mu_i + (S \mathbf{x}^{(k)})_i \bigr), \\
\tilde{x}_i &= x_i^{(k)} + \tau \tilde{y}_i,
\end{align*}
followed by the bSB clipping rule
\[
x_i^{(k+1)} = \begin{cases}
\operatorname{sgn}(\tilde{x}_i), & |\tilde{x}_i| > 1, \\
\tilde{x}_i, & |\tilde{x}_i| \le 1,
\end{cases}
\qquad
y_i^{(k+1)} = \begin{cases}
0, & |\tilde{x}_i| > 1, \\
\tilde{y}_i, & |\tilde{x}_i| \le 1.
\end{cases}
\]
Let $H \subset \{1, \dots, n\}$ and write $Q := H^c$. Assume that
for every $i \in H$,
$x_i^{(k)} = v_i \in \{-1,1\}$, $y_i^{(k)} = 0$, and
\[
(\alpha_k^2 - 1) + \mu_i v_i + \sum_{j \in H} s_{ij} v_i v_j
> \sum_{j \in Q} |s_{ij}|.
\tag{S11}\label{eq:S11}
\]
Equivalently, $\rho_i(k; H) > 0$ in the compact notation of the main
text. Then for every $i \in H$,
$x_i^{(k+1)} = v_i$, $y_i^{(k+1)} = 0$.
\end{proposition}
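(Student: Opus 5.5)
The plan is a direct one-step computation on the Euler update for a fixed $i \in H$. The goal is to show that the pre-clipping position $\tilde{x}_i$ overshoots the wall in the direction $v_i$, that is, $v_i \tilde{x}_i > 1$. The clipping rule then gives $x_i^{(k+1)} = \operatorname{sgn}(\tilde{x}_i) = v_i$ and $y_i^{(k+1)} = 0$. Since $v_i^2 = 1$, it is cleanest to multiply everything by $v_i$ and work with the signed quantities $v_i \tilde{y}_i$ and $v_i \tilde{x}_i$.

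\textbf{Step 1: the momentum update.} Using $y_i^{(k)} = 0$ and $x_i^{(k)} = v_i$, I write
\[
v_i \tilde{y}_i = \tau\Bigl( (\alpha_k^2 - 1) + \mu_i v_i + \sum_{j \in H} s_{ij} v_i v_j + \sum_{j \in Q} s_{ij} v_i x_j^{(k)} \Bigr).
\]
Here I split $(S\mathbf{x}^{(k)})_i$ over $H$ and $Q$, and on $H$ I substitute $x_j^{(k)} = v_j$. The diagonal term vanishes because $s_{ii}=0$.

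\textbf{Step 2: the worst-case envelope.} I bound the $Q$-sum from below by $-\sum_{j \in Q}|s_{ij}|$, using $|x_j^{(k)}| \le 1$. This bound is the step that needs care. It holds because the clipping rule keeps every coordinate in $[-1,1]$ after each step, assuming the initial state lies in the box. I will state this as the standing invariant of bSB. With this bound, \eqref{eq:S11} gives $v_i \tilde{y}_i \ge \tau\,\rho_i(k;H) > 0$.

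\textbf{Step 3: the position update.} Next, $v_i \tilde{x}_i = v_i x_i^{(k)} + \tau v_i \tilde{y}_i = 1 + \tau v_i\tilde{y}_i > 1$. Hence $|\tilde{x}_i| > 1$ and $\operatorname{sgn}(\tilde{x}_i) = v_i$. The first branch of the clipping rule then yields $x_i^{(k+1)} = v_i$ and $y_i^{(k+1)} = 0$.

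Each coordinate $i \in H$ is handled independently. All the frozen coordinates are used only through their time-$k$ values, so no simultaneity issue arises. The only real obstacle is justifying the box bound on the active coordinates in Step 2, which the clipping invariant provides. The strict inequality in \eqref{eq:S11} together with $\tau > 0$ then ensures strict overshoot, which is exactly what is needed to land in the first branch of the clipping rule.
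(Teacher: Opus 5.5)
Your proposal is correct and follows essentially the same route as the paper's proof. Both multiply the momentum update by $v_i$, split $(S\mathbf{x}^{(k)})_i$ over $H$ and $Q$, bound the $Q$-part below by $-\sum_{j\in Q}|s_{ij}|$, and conclude from $v_i\tilde{x}_i = 1+\tau v_i\tilde{y}_i > 1$ that the clipping branch applies; your explicit justification of $|x_j^{(k)}|\le 1$ via the clipping invariant (given an initial state in the box) is a point the paper simply assumes.
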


\noindent\textit{Proof.}
Fix $i \in H$. Since $x_i^{(k)} = v_i$ and $y_i^{(k)} = 0$, the $i$th
component of the Euler step gives
\[
\tilde{y}_i
= \tau \bigl( (\alpha_k^2 - 1) v_i + \mu_i + (S \mathbf{x}^{(k)})_i \bigr).
\]
Multiplying by $v_i$ and splitting the sum over $H$ and $Q$,
\[
v_i \tilde{y}_i
= \tau \biggl( (\alpha_k^2 - 1) + \mu_i v_i
+ \sum_{j \in H} s_{ij} v_i x_j^{(k)}
+ \sum_{j \in Q} s_{ij} v_i x_j^{(k)} \biggr).
\]
For $j \in H$ the hypothesis gives $x_j^{(k)} = v_j$, while for
$j \in Q$ we only use $|x_j^{(k)}| \le 1$. Therefore
\[
v_i \tilde{y}_i
\ge \tau \biggl( (\alpha_k^2 - 1) + \mu_i v_i
+ \sum_{j \in H} s_{ij} v_i v_j
- \sum_{j \in Q} |s_{ij}| \biggr).
\]
By \eqref{eq:S11} the term in parentheses is strictly positive, so
$v_i \tilde{y}_i > 0$. Now
$\tilde{x}_i = v_i + \tau \tilde{y}_i$ gives
$v_i \tilde{x}_i = 1 + \tau v_i \tilde{y}_i > 1$, so
$|\tilde{x}_i| > 1$ and $\operatorname{sgn}(\tilde{x}_i) = v_i$.
The clipping rule yields $x_i^{(k+1)} = v_i$ and $y_i^{(k+1)} = 0$.
Since $i$ was arbitrary, the conclusion holds for every $i \in H$
satisfying the stated robust freezing condition. $\square$

\begin{corollary}\label{cor:set_one_step_app}
Let $H \subset \{1, \dots, n\}$ and write $Q := H^c$. Assume that at
step $k$ one has
$x_i^{(k)} = v_i \in \{-1,1\}$ and $y_i^{(k)} = 0$ for every
$i \in H$, and that the robust freezing inequality \eqref{eq:S11} holds for every
$i \in H$. Then
$x_i^{(k+1)} = v_i$ and $y_i^{(k+1)} = 0$ for every $i \in H$.
\end{corollary}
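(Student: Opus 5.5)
The statement is the set-level version of Proposition~\ref{prop:one_step_app}, and I would obtain it by applying that proposition one coordinate at a time. The only point that needs checking is that the hypotheses of Proposition~\ref{prop:one_step_app} are really hypotheses about the whole set $H$ and the step-$k$ state. Nothing sequential within the step is involved, so the per-coordinate conclusions can be combined without interference.

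Concretely, I would argue as follows.

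\emph{Fix $i \in H$.} The Euler update for $\tilde{y}_i$ involves only $x^{(k)}$ and $y_i^{(k)}$. Every coordinate is updated simultaneously from the step-$k$ state, so the value of $\tilde{y}_i$ does not depend on which other coordinates are later clipped.

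\emph{Lower-bound $v_i\tilde{y}_i$.} Multiplying by $v_i$, I would split $(S\mathbf{x}^{(k)})_i$ over $H$ and $Q$. On $H$ I substitute $x_j^{(k)} = v_j$, which is legitimate because the assumption holds for every $j \in H$. On $Q$ I use the worst-case bound $|x_j^{(k)}| \le 1$, which the clipping rule maintains at every step. This gives
\[
v_i \tilde{y}_i \ge \tau\, \rho_i(k;H) > 0
\]
by \eqref{eq:S11}.

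\emph{Conclude via clipping.} It follows that $v_i\tilde{x}_i = 1 + \tau v_i \tilde{y}_i > 1$. The clipping rule then returns $x_i^{(k+1)} = v_i$ and $y_i^{(k+1)} = 0$.

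Since $i \in H$ was arbitrary, the conclusion holds simultaneously for all of $H$, which is exactly the corollary.

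The only step I regard as a potential obstacle is the box bound $|x_j^{(k)}| \le 1$ for $j \in Q$. I would justify it by a trivial induction on the step index: the initial state lies in $[-1,1]^n$, and each clipped update maps into the box, because coordinates with $|\tilde{x}_j| > 1$ are sent to $\pm 1$ and the others already satisfy $|\tilde{x}_j| \le 1$. With this invariant recorded, the argument is a direct invocation of Proposition~\ref{prop:one_step_app} for each $i \in H$, and no further estimates are needed.
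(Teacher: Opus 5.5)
Your proposal is correct and matches the paper's proof, which simply applies Proposition~\ref{prop:one_step_app} coordinatewise to each $i \in H$. Your induction showing that clipping keeps every coordinate in $[-1,1]$ is a small addition: the paper uses $|x_j^{(k)}| \le 1$ for $j \in Q$ without stating it. That induction depends on the initial state lying in the box, which is standard for bSB but is not stated in the paper.
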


\noindent\textit{Proof.}
Apply Proposition~\ref{prop:one_step_app} coordinatewise to each $i \in H$. $\square$

\begin{corollary}\label{cor:persistent_app}
Let $i \in \{1, \dots, n\}$ and $k_0 \ge 0$. Assume that for some
$v_i \in \{-1,1\}$ one has
$x_i^{(k_0)} = v_i$ and $y_i^{(k_0)} = 0$. Assume moreover that the
continuation schedule is monotone
$\alpha_{k+1} \ge \alpha_k$ for all $k \ge k_0$, that the unresolved
coordinates satisfy $|x_j^{(k)}| \le 1$ for every $k \ge k_0$, and
that at the initial step $k_0$ the robust freezing margin
$(\alpha_{k_0}^2 - 1) + \mu_i v_i + \sum_{j \in H} s_{ij} v_i v_j
> \sum_{j \in Q} |s_{ij}|$
holds. Then
$x_i^{(k)} = v_i$ and $y_i^{(k)} = 0$ for all $k \ge k_0$.
\end{corollary}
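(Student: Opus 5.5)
The plan is to argue by induction on the step index $k \ge k_0$, using Proposition~\ref{prop:one_step_app} (in its set form, Corollary~\ref{cor:set_one_step_app}) as the inductive step. The one structural point to fix first is the role of $H$. The margin involves $\sum_{j \in H} s_{ij} v_i v_j$, which uses the exact values $x_j^{(k)} = v_j$ for $j \in H$. So the induction cannot track the single coordinate $i$ in isolation. Following Corollary~\ref{cor:persistent} in the main text, I would read the hypothesis as holding for every $i \in H$. That is, at step $k_0$ each $i \in H$ satisfies $x_i^{(k_0)} = v_i$, $y_i^{(k_0)} = 0$, and the robust inequality \eqref{eq:S11} with $\alpha_{k_0}$. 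The claim for the distinguished index $i$ then follows from the claim for all of $H$.

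The inductive statement $P(k)$ will be: $x_j^{(k)} = v_j$ and $y_j^{(k)} = 0$ for every $j \in H$. $P(k_0)$ is the hypothesis. For the step from $P(k)$ to $P(k+1)$ with $k \ge k_0$, I need to check the hypotheses of Proposition~\ref{prop:one_step_app} at step $k$, for each $i \in H$.
\begin{itemize}
\item The saturation conditions on $H$ are exactly $P(k)$.
\item The bound $|x_j^{(k)}| \le 1$ for $j \in Q$ is the standing assumption on unresolved coordinates.
\item The margin \eqref{eq:S11} at step $k$ must be checked. Only the confinement term depends on $k$: $\mu_i v_i$ is fixed, $\sum_{j \in H} s_{ij} v_i v_j$ involves only the fixed signs, and $\sum_{j \in Q} |s_{ij}|$ is a constant envelope. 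Monotonicity $\alpha_{k} \ge \alpha_{k_0} \ge 0$ gives $\alpha_k^2 - 1 \ge \alpha_{k_0}^2 - 1$. Hence
\[
(\alpha_k^2 - 1) + \mu_i v_i + \sum_{j \in H} s_{ij} v_i v_j \;\ge\; (\alpha_{k_0}^2 - 1) + \mu_i v_i + \sum_{j \in H} s_{ij} v_i v_j \;>\; \sum_{j \in Q} |s_{ij}|,
\]
so the margin persists.
\end{itemize}
Corollary~\ref{cor:set_one_step_app} then yields $P(k+1)$. Induction gives $P(k)$ for all $k \ge k_0$, and in particular the conclusion for $i$.

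The main obstacle is the interplay just described: the certificate for $i$ is only meaningful if the rest of $H$ stays pinned. The simultaneous induction over all of $H$ resolves this, and I would state that reading explicitly. A secondary subtlety is the step from $\alpha_k \ge \alpha_{k_0}$ to $\alpha_k^2 \ge \alpha_{k_0}^2$. This requires $\alpha_k \ge 0$, which holds for the bSB schedule since $\alpha^2 = a(t) \ge 0$ with $\alpha$ taken as the nonnegative root.
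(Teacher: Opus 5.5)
Your proof is correct and follows the paper's argument: monotonicity of $\alpha_k$ (with $\alpha_k \ge 0$) keeps the left side of \eqref{eq:S11} from decreasing, the box bound on $Q$ keeps the envelope $\sum_{j\in Q}|s_{ij}|$ valid, and repeated application of Proposition~\ref{prop:one_step_app} propagates the frozen state. Your joint induction over all of $H$ is more careful than the paper's proof, which tracks only coordinate $i$ yet tacitly needs $x_j^{(k)} = v_j$ and $y_j^{(k)}=0$ for the other $j \in H$ at every later step. The paper then obtains the set version, Corollary~\ref{cor:set_persistent_app}, by applying this single-coordinate result ``coordinatewise''; your simultaneous induction is what makes that step rigorous.
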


\noindent\textit{Proof.}
For each $k \ge k_0$, monotonicity gives
$\alpha_k^2 \ge \alpha_{k_0}^2$, so the left-hand side of \eqref{eq:S11} at
step $k$ is no smaller than at step $k_0$. Since the unresolved
coordinates remain in the box, the worst-case perturbation bound on
the right-hand side is unchanged. Hence the one-step hypothesis of
Proposition~\ref{prop:one_step_app} remains valid at every subsequent step. Starting
from $x_i^{(k_0)} = v_i$ and $y_i^{(k_0)} = 0$, repeated application
gives the conclusion for all $k \ge k_0$. $\square$

\begin{corollary}\label{cor:set_persistent_app}
Let $H \subset \{1, \dots, n\}$ and $k_0 \ge 0$. Assume that for every
$i \in H$ one has
$x_i^{(k_0)} = v_i \in \{-1,1\}$ and $y_i^{(k_0)} = 0$, that the
continuation schedule is monotone
$\alpha_k \le \alpha_{k+1}$ for all $k \ge k_0$, that
$|x_j^{(k)}| \le 1$ for every unresolved coordinate $j \in Q = H^c$
and every $k \ge k_0$, and that at the initial step $k_0$ the robust freezing
margin \eqref{eq:S11} holds for every $i \in H$. Then
$x_i^{(k)} = v_i$ and $y_i^{(k)} = 0$ for all $i \in H$, $k \ge k_0$.
\end{corollary}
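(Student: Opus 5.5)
The proof is an induction on the step index $k \ge k_0$, applying Corollary~\ref{cor:set_one_step_app} once per step. The claim to propagate is $\mathcal{P}(k)$: for every $i \in H$, $x_i^{(k)} = v_i$ and $y_i^{(k)} = 0$. The base case $\mathcal{P}(k_0)$ is exactly the hypothesis at step $k_0$.

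For the inductive step, assume $\mathcal{P}(k)$ holds for some $k \ge k_0$. To invoke Corollary~\ref{cor:set_one_step_app} at step $k$, I only need the robust freezing inequality \eqref{eq:S11} at step $k$ for every $i \in H$. The right-hand side $\sum_{j\in Q}|s_{ij}|$ does not depend on $k$. On the left-hand side, only $\alpha_k$ depends on the step, through $(\alpha_k^2-1)$; the remaining terms $\mu_i v_i + \sum_{j\in H} s_{ij} v_i v_j$ are fixed because $H$ and $\mathbf{v}_H$ are fixed.

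Monotonicity of the schedule, together with the convention $\alpha_k \ge 0$, gives $\alpha_k^2 \ge \alpha_{k_0}^2$. So the left-hand side at step $k$ is at least its value at $k_0$, which strictly exceeds the right-hand side. Corollary~\ref{cor:set_one_step_app} then yields $\mathcal{P}(k+1)$.

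The one point needing care is where the box bound on $Q$ enters. Proposition~\ref{prop:one_step_app} uses $|x_j^{(k)}| \le 1$ for $j \in Q$ to bound the unresolved coupling term $\sum_{j\in Q} s_{ij} v_i x_j^{(k)}$ from below by $-\sum_{j\in Q}|s_{ij}|$. Since $|x_j^{(k)}| \le 1$ is assumed for every $k \ge k_0$, the same envelope bound is available at each step of the induction. The inequality $\alpha_k^2 \ge \alpha_{k_0}^2$ requires $\alpha_k \ge 0$, which is the standing convention for the bifurcation parameter; I would note this explicitly. With that, the induction closes and gives the conclusion for all $k \ge k_0$ and all $i \in H$.
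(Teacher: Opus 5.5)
Your proof is correct and uses the same ingredients as the paper's. Monotonicity of the schedule keeps the left-hand side of \eqref{eq:S11} from decreasing, and the standing box bound on $Q$ keeps the envelope $\sum_{j\in Q}|s_{ij}|$ valid at every step. The one-step freezing result is then applied repeatedly.

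The organization differs, though. The paper proves the set statement by applying the single-coordinate Corollary~\ref{cor:persistent_app} to each $i\in H$ separately. You instead run one induction on the joint statement $\mathcal{P}(k)$ through the set-level Corollary~\ref{cor:set_one_step_app}.

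Your version is the more careful one. At step $k$, Proposition~\ref{prop:one_step_app} needs $x_j^{(k)}=v_j$ for every $j\in H$, since that is what lets $\sum_{j\in H}s_{ij}v_ix_j^{(k)}$ be replaced by $\sum_{j\in H}s_{ij}v_iv_j$. So the persistence of one coordinate depends on the persistence of all the others in $H$. A coordinate-by-coordinate argument therefore only closes if the induction is actually carried out jointly, as you do.

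Your remark that $\alpha_{k_0}\ge 0$ is needed to pass from $\alpha_k\ge\alpha_{k_0}$ to $\alpha_k^2\ge\alpha_{k_0}^2$ also makes explicit a step the paper leaves implicit.
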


\noindent\textit{Proof.}
Apply Corollary~\ref{cor:persistent_app} coordinatewise to each $i \in H$. $\square$

\begin{remark}
Corollary~\ref{cor:persistent_app} is the mathematical form of the practical freezing
principle used by iSTAR: once a coordinate has entered a sufficiently
strong robust freezing margin regime at a late enough stage, and once the
continuation parameter is still moving in the favorable direction,
that coordinate no longer needs to be updated. In numerical
implementation, however, one usually works with computable surrogate
criteria such as near-saturation, small velocity, and favorable
one-spin freezing margins. These surrogates are weaker than \eqref{eq:S11}; therefore
occasional empirical sign reversals or re-activations are compatible
with the theory and simply indicate that the theorem's safety freezing
margin was not yet fully attained. The set-valued versions in
Corollaries~\ref{cor:set_one_step_app} and~\ref{cor:set_persistent_app} are the direct mathematical prototype
for the hard/soft split used by iSTAR.
\end{remark}

\newpage
\section*{Table A.1. Summary of online certified iSTAR benchmarks}

The online rule freezes coordinates only after the robust freezing
certificate is satisfied along the current bSB trajectory; no same-seed
baseline information is used to choose the freezing time. Dense-work saving is
reported under the benchmark FLOP proxy.

\begin{center}
\small
\begin{table}[!htbp]
\centering
\caption{Summary of the online certified iSTAR benchmarks. The online rule
freezes coordinates only after the robust certificate is satisfied along the
current bSB trajectory; no same-seed baseline information is used to choose the
freezing time. Dense-work saving is reported under the benchmark FLOP proxy.}
\label{tab:appendix_online_certified_summary}
\small
\setlength{\tabcolsep}{3.5pt}
\begin{tabular}{lrrrrrrr}
\toprule
Protocol & Runs & Triggered & Degrading & First freeze & Frozen & Active & Saving \\
\midrule
Table S2, $\mu=0$ & 540 & 230 & 0 & 1313.91 & $96.73\%$ & $3.27\%$ & $11.47\%$ \\
Table S2, $\eta_\lambda=0.2$ & 540 & 540 & 0 & 255.09 & $99.53\%$ & $0.47\%$ & $56.32\%$ \\
All-Gset, $\eta_\lambda=0.2$ & 1420 & 1420 & 0 & 176.90 & $99.44\%$ & $0.56\%$ & $64.44\%$ \\
\bottomrule
\end{tabular}
\end{table}

\end{center}

\newpage
\section*{Table A.2. Instance-wise online certified iSTAR records for the nonzero-field all-Gset benchmark}

Each instance uses 20 seeds, fixed 1200-step bSB horizon,
$\eta_\lambda = 0.2$. All 1420
seed--instance runs trigger certified reduction and none degrades
relative to the same-seed full bSB baseline.

\begin{center}
\footnotesize
\begin{table}[!htbp]
\centering
\caption{Instance-wise online certified iSTAR records for the nonzero-field all-Gset benchmark. Each instance uses 20 seeds, fixed 1200-step bSB horizon, $\eta_\lambda=0.2$, and field seed 20260422. All 1420 seed--instance runs trigger certified reduction and none degrades relative to the same-seed full bSB baseline.}
\label{tab:online_certified_allgset_instance}
\scriptsize
\setlength{\tabcolsep}{2.2pt}
\begin{tabular}{lrrrr@{\hspace{0.9em}}lrrrr}
\toprule
Instance & Trig. & First & Active & Saving & Instance & Trig. & First & Active & Saving \\
\midrule
G1 & 20/20 & 867.50 & 0.03\% & 27.71\% & G37 & 20/20 & 50.00 & 0.88\% & 73.50\% \\
G2 & 20/20 & 1100.00 & 0.36\% & 8.33\% & G38 & 20/20 & 50.00 & 0.56\% & 81.27\% \\
G3 & 20/20 & 977.50 & 0.11\% & 18.54\% & G39 & 20/20 & 50.00 & 0.89\% & 83.64\% \\
G4 & 20/20 & 995.00 & 0.30\% & 17.08\% & G40 & 20/20 & 50.00 & 0.79\% & 84.63\% \\
G5 & 20/20 & 1007.50 & 0.25\% & 16.04\% & G41 & 20/20 & 50.00 & 0.56\% & 80.65\% \\
G6 & 20/20 & 975.00 & 0.38\% & 18.75\% & G42 & 20/20 & 50.00 & 0.52\% & 83.45\% \\
G7 & 20/20 & 935.00 & 0.18\% & 22.08\% & G43 & 20/20 & 50.00 & 0.69\% & 18.89\% \\
G8 & 20/20 & 795.00 & 0.12\% & 33.75\% & G44 & 20/20 & 50.00 & 0.23\% & 42.18\% \\
G9 & 20/20 & 977.50 & 0.30\% & 18.54\% & G45 & 20/20 & 50.00 & 0.26\% & 34.11\% \\
G10 & 20/20 & 880.00 & 0.19\% & 26.66\% & G46 & 20/20 & 50.00 & 0.37\% & 46.31\% \\
G11 & 20/20 & 50.00 & 1.71\% & 95.05\% & G47 & 20/20 & 50.00 & 0.75\% & 23.06\% \\
G12 & 20/20 & 50.00 & 1.34\% & 94.72\% & G48 & 20/20 & 50.00 & 0.50\% & 95.59\% \\
G13 & 20/20 & 50.00 & 1.18\% & 94.66\% & G49 & 20/20 & 50.00 & 0.90\% & 95.54\% \\
G14 & 20/20 & 50.00 & 0.63\% & 58.15\% & G50 & 20/20 & 50.00 & 0.71\% & 95.60\% \\
G15 & 20/20 & 50.00 & 0.58\% & 45.33\% & G51 & 20/20 & 50.00 & 0.30\% & 51.22\% \\
G16 & 20/20 & 50.00 & 0.38\% & 50.91\% & G52 & 20/20 & 50.00 & 0.77\% & 62.21\% \\
G17 & 20/20 & 50.00 & 0.63\% & 52.07\% & G53 & 20/20 & 50.00 & 0.59\% & 54.89\% \\
G18 & 20/20 & 50.00 & 1.34\% & 48.00\% & G54 & 20/20 & 50.00 & 0.79\% & 54.13\% \\
G19 & 20/20 & 50.00 & 0.28\% & 49.11\% & G55 & 20/20 & 50.00 & 0.57\% & 95.68\% \\
G20 & 20/20 & 50.00 & 0.56\% & 53.16\% & G56 & 20/20 & 50.00 & 0.74\% & 95.68\% \\
G21 & 20/20 & 50.00 & 0.79\% & 60.70\% & G57 & 20/20 & 50.00 & 0.58\% & 95.65\% \\
G22 & 20/20 & 50.00 & 0.51\% & 38.63\% & G58 & 20/20 & 50.00 & 0.44\% & 93.31\% \\
G23 & 20/20 & 50.00 & 0.67\% & 43.18\% & G59 & 20/20 & 50.00 & 0.56\% & 95.28\% \\
G24 & 20/20 & 50.00 & 0.40\% & 34.28\% & G60 & 20/20 & 50.00 & 0.48\% & 95.71\% \\
G25 & 20/20 & 50.00 & 0.31\% & 46.97\% & G61 & 20/20 & 50.00 & 0.41\% & 95.72\% \\
G26 & 20/20 & 50.00 & 0.63\% & 47.06\% & G62 & 20/20 & 50.00 & 0.52\% & 95.69\% \\
G27 & 20/20 & 50.00 & 0.54\% & 39.98\% & G63 & 20/20 & 50.00 & 0.44\% & 94.78\% \\
G28 & 20/20 & 50.00 & 0.42\% & 43.12\% & G64 & 20/20 & 50.00 & 0.55\% & 95.41\% \\
G29 & 20/20 & 50.00 & 0.43\% & 44.16\% & G65 & 20/20 & 50.00 & 0.48\% & 95.72\% \\
G30 & 20/20 & 50.00 & 0.31\% & 51.10\% & G66 & 20/20 & 50.00 & 0.45\% & 95.74\% \\
G31 & 20/20 & 50.00 & 0.61\% & 48.53\% & G67 & 20/20 & 50.00 & 0.40\% & 95.75\% \\
G32 & 20/20 & 50.00 & 1.07\% & 95.46\% & G70 & 20/20 & 50.00 & 0.39\% & 95.74\% \\
G33 & 20/20 & 50.00 & 0.50\% & 95.41\% & G72 & 20/20 & 50.00 & 0.43\% & 95.73\% \\
G34 & 20/20 & 50.00 & 1.04\% & 95.32\% & G77 & 20/20 & 50.00 & 0.36\% & 95.76\% \\
G35 & 20/20 & 50.00 & 0.67\% & 80.11\% & G81 & 20/20 & 50.00 & 0.34\% & 95.79\% \\
G36 & 20/20 & 50.00 & 0.67\% & 78.20\% &  &  &  &  &  \\
\bottomrule
\end{tabular}
\end{table}

\end{center}

\newpage
\section*{Table A.3. Instance-wise iSTAR summary on the all-Gset reduction benchmark}

We report the selected probe with maximum dense-work saving among
probes satisfying $\Delta E = E_{\mathrm{red}} - E_{\mathrm{base}} \le 0$.

\begin{center}
\footnotesize
\begin{table}[!htbp]
\centering
\caption{Instance-wise iSTAR summary on the all-Gset reduction benchmark. We report the selected probe with maximum dense-work saving among probes satisfying mean $\Delta E=E_{\mathrm{red}}-E_{\mathrm{base}}\leq0$.}
\label{tab:istar_full_gset}
\scriptsize
\setlength{\tabcolsep}{3pt}
\begin{tabular}{lrrr@{\hspace{1.2em}}lrrr}
\toprule
Instance & Probe & Mean $\Delta E$ & Saving & Instance & Probe & Mean $\Delta E$ & Saving \\
\midrule
G1 & 500 & 0.0000 & 57.02\% & G37 & 500 & 0.0000 & 57.02\% \\
G2 & 800 & 0.0000 & 32.58\% & G38 & 500 & 0.0000 & 57.02\% \\
G3 & 500 & -0.1205 & 57.02\% & G39 & 500 & 0.0000 & 57.02\% \\
G4 & 500 & -0.1258 & 57.02\% & G40 & 500 & 0.0000 & 57.02\% \\
G5 & 600 & 0.0000 & 48.87\% & G41 & 500 & 0.0000 & 57.02\% \\
G6 & 600 & 0.0000 & 48.87\% & G42 & 500 & 0.0000 & 57.02\% \\
G7 & 700 & 0.0000 & 40.73\% & G43 & 600 & 0.0000 & 48.87\% \\
G8 & 500 & 0.0000 & 57.02\% & G44 & 500 & 0.0000 & 57.02\% \\
G9 & 700 & 0.0000 & 40.73\% & G45 & 500 & 0.0000 & 57.02\% \\
G10 & 700 & 0.0000 & 40.73\% & G46 & 500 & 0.0000 & 57.02\% \\
G11 & 500 & 0.0000 & 57.02\% & G47 & 800 & 0.0000 & 32.58\% \\
G12 & 500 & 0.0000 & 57.02\% & G48 & 500 & 0.0000 & 57.02\% \\
G13 & 500 & 0.0000 & 57.02\% & G49 & 500 & 0.0000 & 57.02\% \\
G14 & 500 & 0.0000 & 57.02\% & G50 & 500 & 0.0000 & 57.02\% \\
G15 & 800 & 0.0000 & 32.58\% & G51 & 500 & 0.0000 & 57.02\% \\
G16 & 500 & 0.0000 & 57.02\% & G52 & 700 & 0.0000 & 40.73\% \\
G17 & 500 & 0.0000 & 57.02\% & G53 & 500 & 0.0000 & 57.02\% \\
G18 & 500 & 0.0000 & 57.02\% & G54 & 500 & 0.0000 & 57.02\% \\
G19 & 500 & 0.0000 & 57.02\% & G55 & 500 & 0.0000 & 57.02\% \\
G20 & 500 & 0.0000 & 57.02\% & G56 & 500 & 0.0000 & 57.02\% \\
G21 & 500 & 0.0000 & 57.02\% & G57 & 500 & 0.0000 & 57.02\% \\
G22 & 800 & 0.0000 & 32.58\% & G58 & 500 & 0.0000 & 57.02\% \\
G23 & 500 & 0.0000 & 57.02\% & G59 & 500 & 0.0000 & 57.02\% \\
G24 & 500 & 0.0000 & 57.02\% & G60 & 500 & 0.0000 & 57.02\% \\
G25 & 500 & -0.0039 & 57.02\% & G61 & 500 & 0.0000 & 57.02\% \\
G26 & 700 & 0.0000 & 40.73\% & G62 & 500 & 0.0000 & 57.02\% \\
G27 & 500 & 0.0000 & 57.02\% & G63 & 500 & 0.0000 & 57.02\% \\
G28 & 500 & 0.0000 & 57.02\% & G64 & 500 & 0.0000 & 57.02\% \\
G29 & 800 & 0.0000 & 32.58\% & G65 & 500 & 0.0000 & 57.02\% \\
G30 & 500 & 0.0000 & 57.02\% & G66 & 500 & 0.0000 & 57.02\% \\
G31 & 500 & 0.0000 & 57.02\% & G67 & 500 & 0.0000 & 57.02\% \\
G32 & 500 & 0.0000 & 57.02\% & G70 & 500 & 0.0000 & 57.02\% \\
G33 & 500 & 0.0000 & 57.02\% & G72 & 500 & 0.0000 & 57.02\% \\
G34 & 500 & 0.0000 & 57.02\% & G77 & 500 & 0.0000 & 57.02\% \\
G35 & 500 & 0.0000 & 57.02\% & G81 & 500 & 0.0000 & 57.02\% \\
G36 & 500 & 0.0000 & 57.02\% &  &  &  &  \\
\bottomrule
\end{tabular}
\end{table}

\end{center}

\newpage
\section*{Table A.4. Probe-level iSTAR summary on the 71 covered G-set instances}

Win/tie/loss (W/T/L) counts are seed-level comparisons with the same-seed 1200-step
bSB baseline; wins have $\Delta E < 0$, ties have $\Delta E = 0$, and
losses have $\Delta E > 0$.

\begin{center}
\small
\begin{table}[ht]
\centering
\caption{Probe-level iSTAR summary on the 71 covered G-set instances. W/T/L counts seed-level comparisons with the same-seed 1200-step bSB baseline; wins have $\Delta E<0$, ties have $\Delta E=0$, and losses have $\Delta E>0$.}
\label{tab:istar_probe_summary}
\small
\begin{tabular}{rrrrrr}
\toprule
Probe step & Samples & W/T/L & Mean $\Delta E$ & FLOPs ratio & Saving \\
\midrule
500 & 1420 & 9/1363/48 & 0.1410 & 0.4298 & 57.02\% \\
600 & 1420 & 11/1372/37 & 0.0861 & 0.5112 & 48.88\% \\
700 & 1420 & 1/1409/10 & 0.0235 & 0.5927 & 40.73\% \\
800 & 1420 & 1/1416/3 & 0.0041 & 0.6742 & 32.58\% \\
900 & 1420 & 1/1418/1 & 0.0012 & 0.7556 & 24.44\% \\
1000 & 1420 & 0/1420/0 & 0.0000 & 0.8371 & 16.29\% \\
1100 & 1420 & 0/1420/0 & 0.0000 & 0.9185 & 8.15\% \\
\bottomrule
\end{tabular}
\end{table}

\end{center}

\newpage
\section*{Table A.5. Zero-field ($\mu = 0$) probe sweep on the all-Gset instances}

The diagnostic active-set selector is applied at probe step 500.

\begin{center}
\small
\begin{table}[ht]
\centering
\caption{Zero-field probe sweep ($\mu=0$) on the all-Gset instances with the diagnostic active-set selector at probe step 500.}
\label{tab:istar_probe_mu0}
\small
\begin{tabular}{lc}
\toprule
Statistic & Value \\
\midrule
Probe step & 500 \\
Covered G-set graphs & 71 \\
Seeds per graph & 20 \\
Total samples & 1420 \\
Wins/Ties/Losses vs full baseline & 13/241/1166 \\
Mean $\Delta E$ & $+23.79$ \\
Mean FLOPs ratio & $0.43$ \\
Mean active set size & $460.14$ \\
\bottomrule
\end{tabular}
\end{table}

\end{center}

\newpage
\section*{Table A.6. SB-family cross-variant reduction on G1--G54}

The same tail-reduction principle is applied across aSB, bSB, and dSB
under the Goto et al.~\cite{ref22} Table~S2 protocol on G1--G54. Each method block
reports the best non-degrading operating point, selected by maximum
FLOP saving among probe ratios $0.5, 0.6, 0.7, 0.8, 0.9$ of the
per-instance step count. The entries are probe ratio $\rho$, mean
$\Delta E = E_{\mathrm{red}} - E_{\mathrm{base}}$, and FLOP saving.
A dash means that no probe ratio is non-degrading in mean energy for
that instance and method. Across the 162 instance--variant cells
(54 instances $\times$ 3 variants), 18 are dashes: 0 for aSB,
2 for bSB, and 16 for dSB. The remaining cells report the best
non-degrading operating point as defined above. The dashed cells
correspond to instances for which every probed reduction step ratio
produces a strictly positive mean $\Delta E$, so no non-degrading
operating point exists.

\begin{landscape}
\footnotesize
\scriptsize
\setlength{\tabcolsep}{3pt}
\begin{longtable}{lrrr|rrr|rrr}
\caption{Instance-wise reduction summary for three simulated bifurcation variants under the Goto et al.~Table S2 protocol on G1--G54. Each method block reports the best non-degrading operating point when one exists, selected by maximum FLOP saving among probe ratios $0.5,0.6,0.7,0.8,0.9$ of the per-instance step count. The entries are probe ratio $\rho$, mean $\Delta E=E_{\mathrm{red}}-E_{\mathrm{base}}$, and FLOP saving. A dash means that no probe ratio is non-degrading in mean energy for that instance and method.}\label{tab:sb_family_parallel_reduction}\\
\toprule
Instance & \multicolumn{3}{c|}{aSB} & \multicolumn{3}{c|}{bSB} & \multicolumn{3}{c}{dSB} \\
\cmidrule(lr){2-4}\cmidrule(lr){5-7}\cmidrule(lr){8-10}
 & $\rho$ & Mean $\Delta E$ & Saving & $\rho$ & Mean $\Delta E$ & Saving & $\rho$ & Mean $\Delta E$ & Saving \\
\midrule
\endfirsthead
\toprule
Instance & \multicolumn{3}{c|}{aSB} & \multicolumn{3}{c|}{bSB} & \multicolumn{3}{c}{dSB} \\
\cmidrule(lr){2-4}\cmidrule(lr){5-7}\cmidrule(lr){8-10}
 & $\rho$ & Mean $\Delta E$ & Saving & $\rho$ & Mean $\Delta E$ & Saving & $\rho$ & Mean $\Delta E$ & Saving \\
\midrule
\endhead
\midrule
\multicolumn{10}{r}{Continued on next page} \\
\endfoot
\bottomrule
\endlastfoot
G1 & 0.5 & -109.60 & 48.88\% & 0.6 & -0.20 & 39.10\% & 0.8 & 0.00 & 19.55\% \\
G2 & 0.5 & -156.20 & 48.88\% & 0.5 & 0.00 & 48.88\% & 0.5 & 0.00 & 48.88\% \\
G3 & 0.5 & -130.40 & 48.88\% & 0.8 & 0.00 & 19.55\% & 0.9 & 0.00 & 9.78\% \\
G4 & 0.5 & -44.20 & 48.88\% & 0.9 & 0.00 & 9.78\% & 0.9 & 0.00 & 9.78\% \\
G5 & 0.5 & -125.80 & 48.88\% & 0.8 & 0.00 & 19.55\% & -- & -- & -- \\
G6 & 0.5 & -119.60 & 48.88\% & 0.7 & 0.00 & 29.33\% & 0.9 & 0.00 & 9.78\% \\
G7 & 0.5 & -43.00 & 48.88\% & 0.8 & 0.00 & 19.55\% & 0.8 & 0.00 & 19.55\% \\
G8 & 0.5 & -162.00 & 48.88\% & 0.8 & 0.00 & 19.55\% & 0.9 & 0.00 & 9.78\% \\
G9 & 0.5 & -213.40 & 48.88\% & 0.8 & 0.00 & 19.55\% & 0.8 & 0.00 & 19.55\% \\
G10 & 0.5 & -167.80 & 48.88\% & 0.6 & 0.00 & 39.10\% & -- & -- & -- \\
G11 & 0.5 & -8.00 & 48.88\% & 0.6 & 0.00 & 39.10\% & -- & -- & -- \\
G12 & 0.6 & 0.00 & 39.10\% & 0.5 & 0.00 & 48.88\% & 0.9 & 0.00 & 9.78\% \\
G13 & 0.5 & -6.00 & 48.88\% & 0.5 & 0.00 & 48.88\% & -- & -- & -- \\
G14 & 0.5 & -85.80 & 48.88\% & 0.5 & 0.00 & 48.88\% & 0.9 & -0.20 & 9.78\% \\
G15 & 0.5 & -101.60 & 48.88\% & 0.5 & 0.00 & 48.88\% & 0.9 & -0.20 & 9.78\% \\
G16 & 0.5 & -85.00 & 48.88\% & 0.5 & 0.00 & 48.88\% & 0.5 & 0.00 & 48.88\% \\
G17 & 0.5 & -85.00 & 48.88\% & 0.5 & 0.00 & 48.88\% & -- & -- & -- \\
G18 & 0.5 & -84.40 & 48.88\% & 0.8 & 0.00 & 19.55\% & -- & -- & -- \\
G19 & 0.5 & -141.20 & 48.88\% & 0.7 & 0.00 & 29.33\% & 0.9 & 0.00 & 9.78\% \\
G20 & 0.5 & -10.00 & 48.88\% & 0.9 & 0.00 & 9.78\% & -- & -- & -- \\
G21 & 0.5 & -111.60 & 48.88\% & 0.8 & 0.00 & 19.55\% & -- & -- & -- \\
G22 & 0.5 & -250.80 & 48.88\% & 0.5 & 0.00 & 48.88\% & 0.8 & 0.00 & 19.55\% \\
G23 & 0.5 & -251.20 & 48.88\% & 0.8 & 0.00 & 19.55\% & 0.9 & 0.00 & 9.78\% \\
G24 & 0.5 & -289.00 & 48.88\% & 0.5 & 0.00 & 48.88\% & 0.5 & -0.20 & 48.88\% \\
G25 & 0.5 & -227.60 & 48.88\% & 0.5 & 0.00 & 48.88\% & 0.5 & 0.00 & 48.88\% \\
G26 & 0.5 & -244.80 & 48.88\% & 0.5 & 0.00 & 48.88\% & -- & -- & -- \\
G27 & 0.5 & -221.40 & 48.88\% & 0.8 & 0.00 & 19.55\% & 0.9 & 0.00 & 9.78\% \\
G28 & 0.5 & -209.00 & 48.88\% & -- & -- & -- & 0.9 & 0.00 & 9.78\% \\
G29 & 0.5 & -323.40 & 48.88\% & -- & -- & -- & 0.9 & 0.00 & 9.78\% \\
G30 & 0.5 & -218.00 & 48.88\% & 0.5 & 0.00 & 48.88\% & -- & -- & -- \\
G31 & 0.5 & -291.80 & 48.88\% & 0.5 & 0.00 & 48.88\% & 0.5 & 0.00 & 48.88\% \\
G32 & 0.6 & -10.80 & 39.10\% & 0.5 & 0.00 & 48.88\% & 0.9 & 0.00 & 9.78\% \\
G33 & 0.5 & -4.80 & 48.88\% & 0.5 & 0.00 & 48.88\% & 0.9 & 0.00 & 9.78\% \\
G34 & 0.5 & -9.20 & 48.88\% & 0.5 & 0.00 & 48.88\% & 0.8 & 0.00 & 19.55\% \\
G35 & 0.5 & -267.20 & 48.88\% & 0.5 & 0.00 & 48.88\% & -- & -- & -- \\
G36 & 0.5 & -268.20 & 48.88\% & 0.5 & 0.00 & 48.88\% & 0.9 & 0.00 & 9.78\% \\
G37 & 0.5 & -284.40 & 48.88\% & 0.5 & 0.00 & 48.88\% & 0.9 & -0.20 & 9.78\% \\
G38 & 0.5 & -289.80 & 48.88\% & 0.5 & 0.00 & 48.88\% & -- & -- & -- \\
G39 & 0.5 & -297.20 & 48.88\% & 0.9 & 0.00 & 9.78\% & 0.9 & 0.00 & 9.78\% \\
G40 & 0.5 & -368.00 & 48.88\% & 0.5 & 0.00 & 48.88\% & 0.6 & 0.00 & 39.10\% \\
G41 & 0.5 & -303.20 & 48.88\% & 0.5 & 0.00 & 48.88\% & -- & -- & -- \\
G42 & 0.5 & -341.80 & 48.88\% & 0.9 & 0.00 & 9.78\% & 0.8 & 0.00 & 19.55\% \\
G43 & 0.5 & -111.80 & 48.88\% & 0.9 & 0.00 & 9.78\% & 0.9 & 0.00 & 9.78\% \\
G44 & 0.5 & -77.60 & 48.88\% & 0.9 & 0.00 & 9.78\% & 0.9 & 0.00 & 9.78\% \\
G45 & 0.5 & -143.80 & 48.88\% & 0.5 & 0.00 & 48.88\% & 0.8 & 0.00 & 19.55\% \\
G46 & 0.5 & -95.20 & 48.88\% & 0.5 & 0.00 & 48.88\% & 0.9 & 0.00 & 9.78\% \\
G47 & 0.5 & -144.00 & 48.88\% & 0.5 & 0.00 & 48.88\% & 0.6 & 0.00 & 39.10\% \\
G48 & 0.6 & -155.20 & 39.10\% & 0.7 & -12.00 & 29.33\% & 0.8 & 0.00 & 19.55\% \\
G49 & 0.6 & -111.20 & 39.10\% & 0.7 & -0.80 & 29.33\% & 0.8 & 0.00 & 19.55\% \\
G50 & 0.6 & -92.80 & 39.10\% & 0.7 & -2.80 & 29.33\% & 0.9 & 0.00 & 9.78\% \\
G51 & 0.5 & -126.40 & 48.88\% & 0.5 & 0.00 & 48.88\% & 0.7 & 0.00 & 29.33\% \\
G52 & 0.5 & -110.40 & 48.88\% & 0.5 & 0.00 & 48.88\% & 0.9 & 0.00 & 9.78\% \\
G53 & 0.5 & -108.00 & 48.88\% & 0.5 & 0.00 & 48.88\% & 0.9 & 0.00 & 9.78\% \\
G54 & 0.5 & -90.80 & 48.88\% & 0.5 & 0.00 & 48.88\% & -- & -- & -- \\
\midrule
Coverage & 54/54 & -- & 47.97\% & 52/54 & -- & 36.84\% & 40/54 & -- & 18.82\% \\
\end{longtable}

\end{landscape}

\end{document}